\newtheorem{Lemma}{Lemma}[section]
\newtheorem{remark}[Lemma]{Remark}
\newtheorem{remarks}[Lemma]{Remarks}
\newtheorem{lemma}[Lemma]{Lemma}
\newtheorem{proposition}[Lemma]{Proposition}
\newtheorem{corollary}[Lemma]{Corollary}
\newtheorem{definition}[Lemma]{Definition}
\newtheorem{example}[Lemma]{Example}
\newcommand{\Cal}[1]{{\mathcal #1}}
\newcommand{\Hom}{\operatorname{hom}}
\DeclareMathOperator{\Triv}{\bf Triv}
\newcommand{\cal}{\mathcal}
\DeclareMathOperator{\op}{op}
\DeclareMathOperator{\Equiv}{\bf Equiv}
\DeclareMathOperator{\ParOrd}{\bf ParOrd}
\DeclareMathOperator{\pre}{\bf Preord}
\newcommand{\cmat}{\left(\begin{array}}
\newcommand{\fmat}{\end{array}\right)}
\newcommand{\Sets}{\mathsf{Set}}
\newcommand{\Top}{\mathsf{Top}}
\begin{document}
   \title{Pretorsion theories in general categories}
  \author[Alberto Facchini]{Alberto Facchini}
\address{Universit\`a di Padova, Dipartimento di Matematica ``Tullio Levi-Civita'', 35121 Padova, Italy}
 \email{facchini@math.unipd.it}
\thanks{ The first author was partially supported by Ministero dell'Istruzione, dell'Universit\`a e della Ricerca (Progetto di ricerca di rilevante interesse nazionale ``Categories, Algebras: Ring-Theoretical and Homological Approaches (CARTHA)''), Fondazione Cariverona (Research project ``Reducing complexity in algebra, logic, combinatorics - REDCOM'' within the framework of the programme “Ricerca Scientifica di Eccellenza 2018”), and the Mathematics Department ``Tullio Levi-Civita'' of the University of Padua (Research program DOR1828909 ``Anelli e categorie di moduli'').	
	The second author was partially supported by GNSAGA of Istituto Nazionale di Alta Matematica, Fondazione Cariverona (Research project ``Reducing complexity in algebra, logic, combinatorics - REDCOM'' within the framework of the programme “Ricerca Scientifica di Eccellenza 2018”), by the Mathematics Department ``Tullio Levi-Civita'' of the University of Padua (Research program DOR1828909 ``Anelli e categorie di moduli''), and  by the Department of Mathematics and Computer Science of the University of Catania (Research program “Propriet\`a
	algebriche locali e globali di anelli associati a curve e ipersuperfici” PTR 2016-18).
	The third author was supported by a ``Visiting scientist scholarship - year 2019'' by the University of Padua, and by a collaboration project \emph{Fonds d'Appui \`a l'Internationalisation} of the Universit\'e catholique de Louvain. }  

 \author[Carmelo Finocchiaro]{Carmelo Finocchiaro}
\address{Universit\`a di Catania, Dipartimento di Matematica e Informatica, 95125 Catania, Italy}
 \email{cafinocchiaro@unict.it}
 \author[Marino Gran]{Marino Gran}
\address{Universit\'{e} catholique de Louvain, Institut de Recherche en Math\'{e}matique et Physique, 1348 Louvain-la-Neuve, Belgium}
 \email{marino.gran@uclouvain.be}
   \keywords{Torsion theory, pretorsion theory, ideal of morphisms. \\ \protect \indent 2020 {\it Mathematics Subject Classification.} {Primary 18E40, secondary 18A40, 18B35.}
} 
      \begin{abstract}{ We present a setting for the study of torsion theories in general categories. The idea is to associate, with any pair ($\Cal T$, $\Cal F$) of full replete subcategories in a category $\Cal C$, the corresponding full subcategory $\Cal Z = \cal T \cap \cal F$ of \emph{trivial objects} in $\Cal C$. The morphisms which factor through $\Cal Z$ are called $\Cal Z$-trivial, and these form an ideal of morphisms, with respect to which one can define $\Cal Z$-prekernels, $\Cal Z$-precokernels, and short $\Cal Z$-preexact sequences. This naturally leads to the notion of pretorsion theory, which is the object of study of this article, and includes the classical one in the abelian context when $\Cal Z$ is reduced to the $0$-object of $\mathcal C$. We study the basic properties of pretorsion theories, and examine some new examples in the category of all endomappings of finite sets and in the category of preordered sets.}\end{abstract}

    \maketitle

\section{Introduction}

We present a setting for the study of torsion theories in general categories. Torsion theories in special non-additive categories have been studied by several authors in the past forty years (see for instance \cite{B, BG,BGV,BV,CDT,EG, GJ,GJM,JT,BR, RT,V82,V84,VW},  and the references therein). These special cases concern categories that are pointed, or homological, or semi-abelian, or with equalizers and pushouts, and so on. Our more general approach is based on the idea that a (pre)torsion theory {is simply a} pair $(\Cal T, \Cal F)$ of {full (replete) subcategories of a category $\Cal C$, with $\Cal T$ the \emph{torsion} subcategory and $\mathcal F$ the \emph{torsion-free} subcategory}, for which the ideal of morphisms (in the sense of \cite{Eh}) consists of the morphisms that factor through an object in the full (replete) subcategory $\Cal Z:=\Cal T\cap\Cal F$ {. Given a pretorsion theory  $(\Cal T, \Cal F)$ we call \emph{trivial}} the objects in $\Cal Z$. Thus the starting idea is  a pair $(\Cal T, \Cal F)$ of {full subcategories} for which the intersection $\Cal Z=\Cal T\cap\Cal F$ replaces the zero object when the base category is not pointed. Under some natural mild assumptions, such a setting already allows us to obtain most of the basic results that are well known for classical torsion theories. { For instance, the torsion-free subcategory $\Cal F$ is epireflective in $\Cal C$, and the torsion subcategory $\Cal T$ is monocoreflective in $\Cal C$.}
Several  new situations can then be studied and understood under this perspective. 

After establishing the basic properties of our pretorsion theories, we have discovered that there is a large overlapping between our results and the results in \cite{GJ} and \cite{GJM}. The main difference is that, in \cite{GJ} and \cite{GJM}, a closed ideal $\Cal N$ of null morphisms of the category $\Cal C$ is fixed first of all, assuming that the identity $1_X\colon X\to X$ of every object $X$ of $\Cal C$ has a kernel and a cokernel with respect to $\Cal N$, and then defining a torsion theory relatively to the ideal $\Cal N$ as a pair $(\Cal T, \Cal F)$ {of full subcategories of $\Cal C$}. In our {approach, we firstly fix the pair of subcategories $(\Cal T, \Cal F)$,}
and then consider kernels and cokernels relatively to the ideal generated by the objects in the intersection $\Cal Z=\Cal T\cap\Cal F$. In this way, we don't need kernels and cokernels of the identity morphisms in the category.
The simplest example of a pretorsion theory that satisfies our conditions, but not those in  \cite{GJ}, is the category {$\Cal C = {\bf 2}$
 obtained from the partially ordered set $\{0,1\}$ with $0<1$. This category has two objects ($0$ and $1$) and three morphisms (the two identities and a unique further morphism $0\to 1$). In this category, there is a pretorsion theory $(\Cal C,\{1\})$ according to our definition (see Example~\ref{good}), but the identity $0\to0$ does not have a $\{1\}$-prekernel. This example shows that, in our approach, the subcategory $\Cal Z$ is not required to be both a reflective and a coreflective subcategory in $\cal C$, differently from \cite{GJ,GJM}. For the rest, the large overlapping with \cite{GJ} is due to the fact that in both papers we try to develop the standard elementary well known elementary properties of torsion theories in abelian categories starting from our weak axioms.

\bigskip

We are grateful to Marco Grandis and Sandra Mantovani for some useful suggestions. 

\section{Basic facts and definitions. Pretorsion theories.}
  
In this section, we give a brief presentation of the results of \cite[Section~4]{AC} we need in this paper. 
  
  \medskip
    
  Let $\Cal C$ be an arbitrary category and $\Cal Z$ be a non-empty class of objects of $\Cal C$. For every pair $A,A'$ of objects of $\Cal C$, we indicate by $\Triv_{\Cal Z}(A, A')$ the set of  all morphisms in $\Cal C$ that factor through an object of $\Cal Z$. We will call these morphisms {\em $\Cal Z$-trivial}.

Let $f\colon A\to A'$ be a morphism in $\Cal C$. We say that a morphism $\varepsilon\colon X\to A$ in $\Cal C $ is a \emph{$\Cal Z$-prekernel} of $f$ if the following properties are satisfied: 
\begin{enumerate}
	\item $f\varepsilon$ is a $\Cal Z$-trivial morphism.
	\item Whenever $\lambda \colon Y\to A$ is a morphism in $\Cal C$ and $f\lambda$ is $\Cal Z$-trivial, then there exists a unique morphism $\lambda'\colon Y\to X$ in $\Cal C$ such that $\lambda=\varepsilon\lambda'$. 
\end{enumerate}

\begin{proposition}\label{prekernel-properties'}{\rm \cite[Proposition 5.1]{AC}} 
Let $f\colon A\to A'$ be a morphism in $\Cal C$ and let $\varepsilon\colon X\to A$ be a $\Cal Z$-prekernel for $f$. Then the following properties hold. 
\begin{enumerate}
	\item[{\rm (a)}] $\varepsilon$ is a monomorphism. 
	\item[{\rm (b)}] If $\lambda\colon Y\to A$ is any other $\Cal Z$-prekernel of $f$, then there exists a unique isomorphism $\lambda'\colon Y\to X$ such that $\lambda=\varepsilon\lambda'$. 
\end{enumerate}
\end{proposition}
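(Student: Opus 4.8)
The plan is to derive both statements purely from the universal property defining a $\Cal Z$-prekernel, exactly as one derives uniqueness of kernels in an abelian category. The one structural fact I will use repeatedly is that the class of $\Cal Z$-trivial morphisms is closed under composition with arbitrary morphisms on either side: if $g\varphi$ factors through an object of $\Cal Z$, then so does $g\varphi\psi$ for any composable $\psi$, and likewise $\chi g \varphi$. This is immediate from the definition of $\Triv_{\Cal Z}$, and it is exactly what makes condition (1) propagate along composites, so that the auxiliary morphisms we feed into condition (2) really do have $\Cal Z$-trivial composites with $f$.

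For part (a), suppose $g,h\colon W\to X$ satisfy $\varepsilon g=\varepsilon h$, and set $\lambda:=\varepsilon g\colon W\to A$. Since $f\varepsilon$ is $\Cal Z$-trivial by condition (1) in the definition, the composite $f\lambda=(f\varepsilon)g$ is $\Cal Z$-trivial as well. Hence condition (2), applied to $\lambda$, produces a \emph{unique} morphism $\lambda'\colon W\to X$ with $\lambda=\varepsilon\lambda'$. But both $g$ and $h$ have this property, because $\varepsilon g=\varepsilon h=\lambda$, so $g=h$; thus $\varepsilon$ is a monomorphism.

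For part (b), let $\lambda\colon Y\to A$ be another $\Cal Z$-prekernel of $f$. Applying the universal property of $\varepsilon$ to the morphism $\lambda$ (whose composite $f\lambda$ is $\Cal Z$-trivial by condition (1) for $\lambda$) yields a unique $u\colon Y\to X$ with $\lambda=\varepsilon u$. Symmetrically, applying the universal property of $\lambda$ to $\varepsilon$ yields a unique $v\colon X\to Y$ with $\varepsilon=\lambda v$. Then $\varepsilon=\lambda v=\varepsilon uv$ and $\lambda=\varepsilon u=\lambda vu$; since $\varepsilon$ and $\lambda$ are both monomorphisms by part (a), we conclude $uv=1_X$ and $vu=1_Y$, so $\lambda':=u$ is an isomorphism with $\lambda=\varepsilon\lambda'$. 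Its uniqueness among all morphisms $\lambda'$ satisfying $\lambda=\varepsilon\lambda'$ is once more precisely the uniqueness clause in condition (2) for $\varepsilon$.

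Since every step is a direct invocation of the defining universal property, I do not anticipate a genuine obstacle. The only point requiring a moment's care is the bookkeeping observation in the first paragraph, that $\Cal Z$-triviality is stable under pre- and post-composition; without it one could not guarantee that $f\lambda$ (for $\lambda=\varepsilon g$, respectively $\lambda=\varepsilon uv$, etc.) is $\Cal Z$-trivial, which is what licenses the applications of condition (2).
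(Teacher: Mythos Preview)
Your proof is correct and is precisely the standard universal-property argument the paper has in mind; the paper itself does not spell out the proof here but defers to \cite[Section~4]{AC}, and the argument it invokes elsewhere (e.g.\ in the proof of Proposition~\ref{uniqueness-short-exact-seq}) follows exactly your pattern of producing mutually inverse comparison maps and cancelling them using the monomorphism property from part~(a).
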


(The proof of this Proposition and the other results presented in this Section are simple, and can be found in \cite[Section 4]{AC}.)

Dually, a \emph{$\Cal Z$-precokernel} of $f$ is a morphism $\eta\colon A'\to X$ such that:
\begin{enumerate}
	\item $\eta f$ is a $\Cal Z$-trivial morphism.
	\item Whenever $\mu\colon A'\to Y$ is a morphism and $\mu f$ is $\Cal Z$-trivial, then there exists a unique morphism $\mu'\colon X\to Y$ with $\mu=\mu' \eta$.
\end{enumerate}

\smallskip

If $\Cal C^{\op}$ is the opposite category of $\Cal C$, the $\Cal Z$-precokernel of a morphism $f\colon A\to A'$ in $\Cal C$ is the $\Cal Z$-prekernel of the morphism $f\colon A'\to A$ in $\Cal C^{\op}$. Hence, from Proposition \ref{prekernel-properties'}, we get:

\begin{proposition}\label{precokernel-properties'}
Let $f\colon A\to A'$ be a morphism in a category $\mathcal C$ and let $\eta\colon A'\to X$ be a $\mathcal Z$-precokernel of $f$. Then the following properties hold. 
\begin{enumerate}
	\item[{\rm (a)}]  $\eta$ is an epimorphism.
	\item[{\rm (b)}]  If $\zeta\colon A'\to Y$ is another $\mathcal Z$-precokernel of $f$, then there exists a unique isomorphism $\varphi\colon X\to Y$ satisfying $\zeta=\varphi \eta$. 
\end{enumerate}
\end{proposition}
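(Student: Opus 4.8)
The plan is to deduce the statement directly by duality from Proposition~\ref{prekernel-properties'}, which has already been established. The first thing I would record is that the class $\Cal Z$ of objects, and hence the notion of a $\Cal Z$-trivial morphism, is manifestly self-dual: a morphism $g$ of $\Cal C$ factors through some object of $\Cal Z$ if and only if the corresponding morphism of $\Cal C^{\op}$ does, since $\Cal Z$ is literally the same class of objects in $\Cal C$ and in $\Cal C^{\op}$ and factorizations are merely reversed, not destroyed. Thus $\Triv_{\Cal Z}$ computed in $\Cal C^{\op}$ is exactly $\Triv_{\Cal Z}$ computed in $\Cal C$, read backwards.

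Next I would carefully unwind the definitions to check that a morphism $\eta\colon A'\to X$ is a $\Cal Z$-precokernel of $f\colon A\to A'$ in $\Cal C$ precisely when $\eta$, regarded as a morphism $X\to A'$ of $\Cal C^{\op}$, is a $\Cal Z$-prekernel of $f$, regarded as a morphism $A'\to A$ of $\Cal C^{\op}$. Concretely: the requirement that $\eta f$ be $\Cal Z$-trivial in $\Cal C$ becomes the requirement that the composite of $\eta\colon X\to A'$ followed by $f\colon A'\to A$ in $\Cal C^{\op}$ be $\Cal Z$-trivial, which is condition~(1) in the definition of $\Cal Z$-prekernel; and the universal property of the $\Cal Z$-precokernel, phrased with $\mu\colon A'\to Y$, $\mu f$ trivial, and $\mu=\mu'\eta$, becomes verbatim the universal property of the $\Cal Z$-prekernel in $\Cal C^{\op}$, phrased with the arrow $Y\to A'$ corresponding to $\mu$ and the factorization read in the opposite order. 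The only point requiring attention here is the systematic reversal of the direction of every arrow and of every composite, so that ``unique $\mu'\colon X\to Y$ in $\Cal C$ with $\mu=\mu'\eta$'' is matched with ``unique $\mu'\colon Y\to X$ in $\Cal C^{\op}$ with $\mu=\eta\mu'$''; I expect this bookkeeping, rather than any genuine mathematical difficulty, to be the only delicate step.

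Once it is checked that $\eta$ is a $\Cal Z$-prekernel of $f$ in $\Cal C^{\op}$, part~(a) is immediate: Proposition~\ref{prekernel-properties'}(a) gives that $\eta$ is a monomorphism in $\Cal C^{\op}$, and a monomorphism in $\Cal C^{\op}$ is by definition an epimorphism in $\Cal C$. For part~(b), if $\zeta\colon A'\to Y$ is another $\Cal Z$-precokernel of $f$ in $\Cal C$, then by the same translation it is another $\Cal Z$-prekernel of $f$ in $\Cal C^{\op}$, and Proposition~\ref{prekernel-properties'}(b) yields a unique isomorphism between the two prekernels in $\Cal C^{\op}$ compatible with the structure maps; translating this isomorphism and the compatibility relation back to $\Cal C$ produces exactly a unique isomorphism $\varphi\colon X\to Y$ satisfying $\zeta=\varphi\eta$, which completes the proof. (Alternatively one could argue directly, imitating the proof of Proposition~\ref{prekernel-properties'} by invoking the uniqueness clause of the universal property twice to build mutually inverse comparison maps, but the dual argument is shorter.)
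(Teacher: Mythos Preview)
Your proposal is correct and takes essentially the same approach as the paper: the paper simply remarks that a $\Cal Z$-precokernel of $f$ in $\Cal C$ is a $\Cal Z$-prekernel of $f$ in $\Cal C^{\op}$ and invokes Proposition~\ref{prekernel-properties'} by duality. Your write-up is a careful unfolding of exactly this dualization, with the bookkeeping of reversed composites spelled out.
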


Let $f\colon A\to B$ and $g\colon B\to C$ be morphisms in $\Cal C$. We say that $$\xymatrix{
	A \ar[r]^f &  B \ar[r]^g &  C}$$ is a \emph{short $\Cal Z$-preexact sequence} in $\Cal C$ if $f$ is a $\Cal Z$-prekernel of $g$ and $g$ is a $\Cal Z$-precokernel of $f$.
	
Clearly, if $\xymatrix{
	A \ar[r]^f &  B \ar[r]^g &  C}$ is a short $\Cal Z$-preexact sequence in $\Cal C$, then\linebreak $\xymatrix{
	C \ar[r]^g &  B \ar[r]^f &  A}$ is a short $\Cal Z$-preexact sequence in $\Cal C^{\op}$.
	
\begin{remark}{\rm 
{ Given any full subcategory $\Cal Z$ of $\Cal C$, the morphisms of $\Cal C$ which factor through $\Cal Z$ is an \emph{ideal of morphism} in the sense of Ehresmann \cite{Eh}. This means that the class of $\Cal Z$-trivial morphisms satisfies the property that the composite 
$$\xymatrix{X \ar[r]^f & Y \ar[r]^g & Z} $$
is $\Cal Z$-trivial whenever one of the two morphisms $f$ or $g$ is $\Cal Z$-trivial.
Categories with an assigned ideal of morphisms were later considered by R.~Lavendhomme \cite{La}, and by M. Grandis  in his foundational work on homological and homotopical algebra \cite{G1,G2,G3}.
More recently, kernels, cokernels and short exact sequences with respect to 
an ideal of morphisms have been considered by various authors. In particular, torsion theories defined with respect to an ideal of morphisms were first introduced by S. Mantovani \cite{Mantovani}, and then also investigated by M.~Grandis and G. Janelidze \cite{GJ}. M. Gran, Z. Janelidze, D. Rodelo and A. Ursini \cite{GJU, GJR} have studied several exactness properties in algebra in categories equipped with an ideal of morphisms.} In this paper, we only consider the ideals generated by the identities of the objects in { the full subcategory} $\Cal Z$ of $\Cal C$.}\end{remark}
	
	The following lemma is particularly important for the rest of the paper ({Cf. \cite[Lemma 4.4]{AC}}):

  \begin{Lemma} \label{xxx}  Let $\Cal C$ be a category, $\Cal Z$ a non-empty class of objects of $\Cal C$ and let $f\colon  A\to B$, $g\colon B\to C$ be  morphisms in $\cal C$. The following statements hold.
  	
  	{\rm (a)} If $g$ is a $\cal Z$-precokernel of $f$, then $f$ is $\Cal Z$-trivial if and only if $g$ is an isomorphism.
  	
  	{\rm (b)} If $f$ is a $\cal Z$-prekernel of $g$, then  $g$ is $\Cal Z$-trivial if and only if $f$ is an isomorphism.\end{Lemma}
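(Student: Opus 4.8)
The plan is to prove part~(a) directly from the defining property of a $\mathcal Z$-precokernel and the fact that the $\mathcal Z$-trivial morphisms form an ideal, and then to deduce part~(b) by passing to the opposite category. Indeed, as noted just before Proposition~\ref{precokernel-properties'}, if $f\colon A\to B$ is a $\mathcal Z$-prekernel of $g\colon B\to C$ in $\mathcal C$, then $f\colon B\to A$ is a $\mathcal Z$-precokernel of $g\colon C\to B$ in $\mathcal C^{\op}$; since ``being $\mathcal Z$-trivial'' and ``being an isomorphism'' are self-dual notions, applying (a) in $\mathcal C^{\op}$ yields (b) immediately. So the only real work is (a).

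For the ``if'' direction of (a), assume $g$ is an isomorphism. Since $g$ is a $\mathcal Z$-precokernel of $f$, the composite $gf$ is $\mathcal Z$-trivial by definition. Then $f = 1_B f = (g^{-1}g)f = g^{-1}(gf)$, and because the $\mathcal Z$-trivial morphisms form an ideal of morphisms, the composite $g^{-1}\circ(gf)$ is again $\mathcal Z$-trivial; hence $f$ is $\mathcal Z$-trivial.

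For the ``only if'' direction, assume $f$ is $\mathcal Z$-trivial. I would first apply the universal property of the precokernel to $\mu = 1_B\colon B\to B$: since $\mu f = f$ is $\mathcal Z$-trivial, there is a unique $\mu'\colon C\to B$ with $1_B = \mu' g$. It then remains to check $g\mu' = 1_C$. For this I would apply the universal property once more, now to $\mu = g\colon B\to C$: since $gf$ is $\mathcal Z$-trivial, there is a unique $h\colon C\to C$ with $g = hg$; but both $h = 1_C$ and $h = g\mu'$ satisfy this equation, the latter because $(g\mu')g = g(\mu' g) = g\, 1_B = g$, so by uniqueness $g\mu' = 1_C$. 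Combined with $\mu' g = 1_B$, this shows $g$ is an isomorphism with inverse $\mu'$.

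The argument is entirely elementary, and I do not expect a substantial obstacle. The one point that requires a little care is the second application of the universal property, which is what upgrades the one-sided relation $\mu' g = 1_B$ to a genuine inverse; one must also make sure, at each step, that the composite being fed into the universal property (respectively into the ideal property) is actually $\mathcal Z$-trivial, but in every instance this follows at once either from the defining condition of the precokernel or from the ideal property of $\mathcal Z$-trivial morphisms.
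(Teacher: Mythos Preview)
Your proof is correct and follows essentially the same approach as the paper. The only difference is packaging: for the ``only if'' direction of (a), the paper observes that $1_B$ is itself a $\mathcal Z$-precokernel of $f$ and then cites the uniqueness of $\mathcal Z$-precokernels up to isomorphism (Proposition~\ref{precokernel-properties'}(b)), whereas you unfold that uniqueness argument by hand with two direct applications of the universal property.
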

	
	\begin{proof} (a) Suppose $f$ $\Cal Z$-trivial. The identity morphism $1_B\colon B\to B$ is clearly a $\Cal Z$-precokernel of $f$. By the uniqueness up to isomorphism of $\Cal Z$-precokernels (Proposition \ref{precokernel-properties'}), there is a unique isomorphism $h\colon C\to B$ such that $hg=1_B$. Hence $g$ is an isomorphism.
	
	Conversely, suppose that $g$ is an isomorphism. Since $gf$ is $\Cal Z$-trivial, i.e., $gf$ factors through an object in $\Cal Z$, the same holds for $f$, that is, $f$ is $\Cal Z$-trivial.

Statement (b) follows by duality. \end{proof}

Another fact that will be freely used in the rest of the paper is the following elementary result. 

\begin{lemma}
\label{exact-iso} Let $\cal C$ be a category, let $\cal Z$ be a non-empty class of objects of $\cal C$ and let 
$$\xymatrix{
	A \ar[r]^f &  B \ar[r]^g &  C}$$ be a short $\Cal Z$-preexact sequence. If $\alpha\colon X\to A$, $\beta\colon C\to Y$ are isomorphisms, then the sequence 
$$\xymatrix{
	X \ar[r]^{f\alpha} &  B \ar[r]^{\beta g} &  Y}$$
is also $\cal Z$-preexact. 
\end{lemma}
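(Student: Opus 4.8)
The plan is to verify directly that $f\alpha$ is a $\Cal Z$-prekernel of $\beta g$ and that $\beta g$ is a $\Cal Z$-precokernel of $f\alpha$; by the symmetry of the situation (replacing $\Cal C$ by $\Cal C^{\op}$, which interchanges prekernels and precokernels and swaps $\alpha$ with $\beta^{-1}$), it suffices to do the first of these two verifications carefully, and the second follows by duality. So first I would check the two defining properties of a $\Cal Z$-prekernel.

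For property (1), I need $(\beta g)(f\alpha)$ to be $\Cal Z$-trivial. Since $(\beta g)(f\alpha) = \beta (gf) \alpha$ and $gf$ is $\Cal Z$-trivial (because $f$ is a $\Cal Z$-prekernel of $g$, hence $gf$ factors through $\Cal Z$), the fact that $\Cal Z$-trivial morphisms form an ideal — i.e., any composite with a $\Cal Z$-trivial morphism is again $\Cal Z$-trivial — immediately gives that $\beta(gf)\alpha$ is $\Cal Z$-trivial. For property (2), suppose $\lambda\colon W\to B$ satisfies $(\beta g)\lambda$ $\Cal Z$-trivial. Then $g\lambda = \beta^{-1}(\beta g \lambda)$ is $\Cal Z$-trivial, again since composition with the isomorphism $\beta^{-1}$ preserves $\Cal Z$-triviality. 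Because $f$ is a $\Cal Z$-prekernel of $g$, there is a unique $\lambda'\colon W\to A$ with $\lambda = f\lambda'$. Then setting $\lambda'' := \alpha^{-1}\lambda'\colon W\to X$, we get $(f\alpha)\lambda'' = f\alpha\alpha^{-1}\lambda' = f\lambda' = \lambda$. For uniqueness: if $(f\alpha)\mu = \lambda$ for some $\mu\colon W\to X$, then $f(\alpha\mu) = \lambda$, so $\alpha\mu = \lambda'$ by the uniqueness clause for $f$, whence $\mu = \alpha^{-1}\lambda' = \lambda''$. This establishes that $f\alpha$ is a $\Cal Z$-prekernel of $\beta g$.

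The dual statement — that $\beta g$ is a $\Cal Z$-precokernel of $f\alpha$ — follows by applying what was just proved in the opposite category $\Cal C^{\op}$: there the given sequence reads $C \to B \to A$ with $g$ a $\Cal Z$-prekernel of $f$, the isomorphisms $\beta^{-1}\colon Y\to C$ and $\alpha^{-1}\colon A\to X$ play the roles of the two isomorphisms, and the composite prekernel is $g\beta^{-1}$ (in $\Cal C^{\op}$), i.e. $\beta g$ read in $\Cal C$ precomposed appropriately; unwinding, $\beta g$ is a $\Cal Z$-precokernel of $f\alpha$ in $\Cal C$. Combining the two parts, $\xymatrix{X \ar[r]^{f\alpha} & B \ar[r]^{\beta g} & Y}$ is a short $\Cal Z$-preexact sequence.

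There is essentially no obstacle here: the whole argument is a bookkeeping exercise using only that $\Cal Z$-trivial morphisms form an ideal (so they are stable under pre- and post-composition with arbitrary morphisms, in particular with isomorphisms) and the universal properties of prekernels and precokernels. The only point requiring a little care is not to conflate the two universal properties and to track which isomorphism ($\alpha$ or $\beta$) intervenes on which side; beyond that, invoking duality via $\Cal C^{\op}$ halves the work.
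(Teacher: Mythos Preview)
Your proof is correct. The paper itself does not supply a proof of this lemma, calling it simply ``an elementary result'' and stating it without argument; your direct verification of the two defining properties of a $\Cal Z$-prekernel (and the dual appeal to $\Cal C^{\op}$ for the precokernel half) is exactly the routine check one would expect, and it goes through without issue.
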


  \begin{definition}\label{pretorsion-theory-def} {\rm Let $\Cal C$ be an arbitrary category. A {\em pretorsion theory} $(\Cal T,\Cal F)$ in $\Cal C$ consists of two replete (= closed under isomorphism) full subcategories $\Cal T,\Cal F$  of $\Cal C$, satisfying the following two conditions. Set $\Cal Z:=\Cal T\cap\Cal F$. 
  	
	(1) $\Hom_{\Cal C}(T,F)=\Triv_{\Cal Z}(T, F)$  for every object $T\in\Cal T$, $F\in\Cal F$.}\end{definition}
	
	  (2) For every object $B$ of $\Cal C$ there is a short $\Cal Z$-preexact sequence $$\xymatrix{
	A \ar[r]^f &  B \ar[r]^g &  C}$$ with $A\in\Cal T$ and $C\in\Cal F$.

\bigskip

	In the rest of the paper, whenever we will deal with a pretorsion theory  $(\Cal T,\Cal F)$ for a category $\Cal C$, the symbol $\Cal Z$ will always indicate the intersection $\Cal T\cap\Cal F$. Notice that if $(\Cal T,\Cal F)$ is a pretorsion theory in a category $\Cal C$, then $(\Cal F,\Cal T)$ turns out to be a pretorsion theory in $\Cal C^{\op}$. 
	
	\begin{proposition}\label{ortogonal}{\rm \cite[Proposition 4.5]{AC}} Let $(\Cal T,\Cal F)$ be a pretorsion theory in a category $\Cal C$, and let $X$ be any object in $\Cal C$. \begin{enumerate}
	\item[{\rm (a)}]  If $\Hom_{\Cal C}(X,F)=\Triv_{\Cal Z}(X,F)$ for every $F\in\mathcal F$, then $X\in\Cal T $.
	\item[{\rm (b)}]  If $\Hom_{\Cal C}(T,X)=\Triv_{\Cal Z}(T,X)$ for every $T\in\Cal T $, then $X\in\mathcal F$. 
\end{enumerate}\end{proposition}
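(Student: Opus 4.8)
The plan is to exploit condition~(2) in the definition of a pretorsion theory to produce, for the given object $X$, a short $\Cal Z$-preexact sequence with a torsion left-hand term and a torsion-free right-hand term, and then to collapse it using Lemma~\ref{xxx}.

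For part~(a): first I would invoke condition~(2) to fix a short $\Cal Z$-preexact sequence
$$\xymatrix{ A \ar[r]^f & X \ar[r]^g & C}$$
with $A \in \Cal T$ and $C \in \Cal F$. Since $C$ lies in $\Cal F$, the hypothesis on $X$ applies to the object $C$ and yields $g \in \Hom_{\Cal C}(X,C) = \Triv_{\Cal Z}(X,C)$; that is, $g$ is $\Cal Z$-trivial. Now, by definition of a short $\Cal Z$-preexact sequence, $f$ is a $\Cal Z$-prekernel of $g$, so Lemma~\ref{xxx}(b) applies and forces $f$ to be an isomorphism. Finally, since $A \in \Cal T$ and $\Cal T$ is replete, the isomorphism $f\colon A\to X$ shows $X \in \Cal T$.

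For part~(b): I would simply dualize. Recall that $(\Cal F,\Cal T)$ is a pretorsion theory in $\Cal C^{\op}$ with the same intersection $\Cal Z$, and that the class of $\Cal Z$-trivial morphisms is stable under passing to the opposite category. The hypothesis of~(b), namely $\Hom_{\Cal C}(T,X) = \Triv_{\Cal Z}(T,X)$ for all $T\in\Cal T$, becomes $\Hom_{\Cal C^{\op}}(X,T)=\Triv_{\Cal Z}(X,T)$ for all $T$ in the torsion-free subcategory $\Cal T$ of $\Cal C^{\op}$; applying part~(a) in $\Cal C^{\op}$ then gives that $X$ belongs to the torsion subcategory $\Cal F$ of the pretorsion theory $(\Cal F,\Cal T)$, i.e.\ $X \in \Cal F$.

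There is no serious obstacle here: the argument is essentially a one-line application of Lemma~\ref{xxx} once the sequence from condition~(2) is in place. The only point requiring care is bookkeeping: one must use part~(b) of Lemma~\ref{xxx} (the prekernel statement), not part~(a), since in the sequence furnished by condition~(2) it is $f$ that is the $\Cal Z$-prekernel while $g$ is the morphism shown to be $\Cal Z$-trivial; and in the dualization one must check that the roles of $\Cal T$ and $\Cal F$ are interchanged correctly.
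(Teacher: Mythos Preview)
Your argument is correct. The paper does not actually supply a proof of this proposition---it merely cites \cite[Proposition~4.5]{AC} and remarks that the proofs in this section ``are simple, and can be found in \cite[Section~4]{AC}''---but your approach is precisely the intended one, and it is the same reasoning the paper itself deploys in Example~\ref{Car} (where the special case $X\in\Cal T$ is treated by the identical mechanism: the morphism $g$ to the torsion-free object is $\Cal Z$-trivial, hence $f$ is an isomorphism by Lemma~\ref{xxx}(b)).
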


As a corollary,  we have that given a pretorsion theory $(\Cal T,\Cal F)$ in a category $\Cal C$, any two of the three classes $\Cal T,\Cal F,\Cal Z$ determine the third one. Moreover:

\begin{corollary}\label{retract} Let $(\Cal T,\Cal F)$ be a pretorsion theory in a category $\Cal C$. Then the three classes $\Cal T$, $\Cal F$ and $\Cal Z$ are all closed under retracts.\end{corollary}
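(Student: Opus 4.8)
The plan is to show that each of $\Cal T$, $\Cal F$ and $\Cal Z$ is closed under retracts. Recall that $Y$ is a retract of $X$ when there exist morphisms $s\colon Y\to X$ and $r\colon X\to Y$ with $rs = 1_Y$. Since $(\Cal F,\Cal T)$ is a pretorsion theory in $\Cal C^{\op}$, it suffices to prove the statement for $\Cal T$; the statement for $\Cal F$ will follow by duality, and the statement for $\Cal Z = \Cal T\cap\Cal F$ is then immediate since an intersection of classes closed under retracts is closed under retracts.

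So suppose $X\in\Cal T$ and $Y$ is a retract of $X$, with $s\colon Y\to X$, $r\colon X\to Y$ and $rs=1_Y$. By Proposition~\ref{ortogonal}(a), to conclude $Y\in\Cal T$ it is enough to check that $\Hom_{\Cal C}(Y,F)=\Triv_{\Cal Z}(Y,F)$ for every $F\in\Cal F$. One inclusion is trivial, since every $\Cal Z$-trivial morphism is in particular a morphism. For the other, take an arbitrary $h\colon Y\to F$ with $F\in\Cal F$. Then $hr\colon X\to F$ is a morphism from an object of $\Cal T$ to an object of $\Cal F$, so by condition~(1) of the definition of pretorsion theory, $hr$ is $\Cal Z$-trivial, i.e.\ it factors through some object of $\Cal Z$. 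Composing with $s$ on the right and using $rs=1_Y$, we get $h = h\,1_Y = h(rs) = (hr)s$, which is then also $\Cal Z$-trivial, since the ideal of $\Cal Z$-trivial morphisms is closed under precomposition with arbitrary morphisms (this is exactly the Ehresmann ideal property recalled in the Remark). Hence $h\in\Triv_{\Cal Z}(Y,F)$, as required, and therefore $Y\in\Cal T$.

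Dually, if $X\in\Cal F$ and $Y$ is a retract of $X$ via $s\colon Y\to X$, $r\colon X\to Y$ with $rs=1_Y$, then for any $T\in\Cal T$ and any $h\colon T\to Y$ we have $sh\colon T\to X$ is $\Cal Z$-trivial by condition~(1), hence $h = (rs)h = r(sh)$ is $\Cal Z$-trivial by the ideal property (closure under postcomposition), so by Proposition~\ref{ortogonal}(b) we get $Y\in\Cal F$. Finally, $\Cal Z = \Cal T\cap\Cal F$ is closed under retracts because both $\Cal T$ and $\Cal F$ are.

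I do not expect a serious obstacle here: the whole argument is a routine application of Proposition~\ref{ortogonal} together with the definition of pretorsion theory and the two-sided ideal property of $\Cal Z$-trivial morphisms. The only point requiring a little care is to make sure one uses the correct half of Proposition~\ref{ortogonal} (the ``orthogonality'' characterization of $\Cal T$ and of $\Cal F$) rather than trying to produce the required $\Cal Z$-preexact sequence for $Y$ directly from the one for $X$; the characterization via Hom-sets is what makes the retract argument completely formal.
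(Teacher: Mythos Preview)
Your proof is correct and essentially identical to the paper's own argument: both use Proposition~\ref{ortogonal} to reduce retract-closure of $\Cal T$ to checking that every morphism from the retract into an object of $\Cal F$ is $\Cal Z$-trivial, obtain this from the factorization $h=(hr)s$ with $hr$ $\Cal Z$-trivial, and then handle $\Cal F$ by duality and $\Cal Z$ as the intersection. Only the notation differs.
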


\begin{proof} In order to show that $\Cal T$ is closed under retracts, suppose that $T$ is an object of $\Cal T$, $X$ an object in $\Cal C$, and $f\colon X\to T$ and $g\colon T\to X$ are morphisms in $\Cal C$ with $gf=1_X$. For any object $F$ in $\Cal F$ and morphism $h\colon X\to F$, we have that $h=hgf$, where $hg\in\Hom_{\Cal C}(T,F)=\Triv_{\Cal Z}(T,F)$. Thus $hg$ is $\Cal Z$-trivial, so $h=hgf$ is also $\Cal Z$-trivial. This proves that  $\Hom_{\Cal C}(X,F)=\Triv_{\Cal Z}(X,F)$ for every $F\in\mathcal F$. By Proposition~\ref{ortogonal}, $X\in\Cal T $. This shows that $\Cal T$ is closed under retracts. Dually, $\cal F$ is closed under retracts. Clearly, an intersection of {full subcategories} closed under retracts is closed under retracts.\end{proof}

\begin{remarks}\label{non-empty}
{\rm (a) As a consequence of Axiom (2) in Definition \ref{pretorsion-theory-def}, if $(\Cal T,\Cal F)$ is a pretorsion theory, then, for an object $B$ of $\Cal C$, there is a short $\Cal Z$-preexact sequence $\xymatrix{
	A \ar[r]^f &  B \ar[r]^g &  C.}$ Since the composite morphism $gf$ is $\Cal Z$-trivial, there exists an object in $\Cal Z$. Therefore the intersection $\Cal Z=\Cal T\cap\Cal F$ is always a non-empty class.
	
(b) It is possible that there are objects $B$ in $\Cal C$ such that\linebreak $\Hom_{\Cal C}(Z,B)=\emptyset$ for every $Z\in\Cal Z$ (or the dual: there can exist objects $B$ in $\Cal C$ such that $\Hom_{\Cal C}(B,Z)=\emptyset$ for every $Z\in\Cal Z$.) For instance, an example is given by the pretorsion theory cited in the Introduction, i.e., the category $\Cal C$ obtained from the partially ordered set {$\{0,1\}$ with $0<1$. In this category, for the pretorsion theory $(\Cal C,\{1\})$ (Example~\ref{good}), there is no morphism $Z\to 0$ for $Z\in\Cal Z=\{1\}$.}}
\end{remarks}

\section{First properties of pretorsion theories}

First of all, we prove that the short $\cal Z$-preexact sequence given in Axiom~(2) of Definition \ref{pretorsion-theory-def} is uniquely determined, up to isomorphism. 

\begin{proposition}\label{uniqueness-short-exact-seq}
Let $\cal C$ be a category and let $(\cal T,\cal F)$ be a pretorsion theory for $\cal C$. If 
$$
\xymatrix{
	T \ar[r]^\varepsilon &  A \ar[r]^\eta &  F} \quad\mbox{and}\quad \xymatrix{
	T' \ar[r]^{\varepsilon'} &  A \ar[r]^{\eta'} &  F'}
$$
are $\cal Z$-preexact sequences, where $T,T'\in\cal T$ and $F,F'\in\cal F$, then there exists a unique isomorphism $\alpha\colon T\to T'$ and a unique isomorphism $\sigma\colon F\to F'$ making the diagram 
$$
\xymatrix{
	T \ar[r]^\varepsilon\ar[d]^\alpha &  A \ar[r]^\eta \ar@{=}[d] &  F\ar[d]^\sigma\\ T' \ar[r]^{\varepsilon'} &  A \ar[r]^{\eta'} &  F'}
$$
commute.
\end{proposition}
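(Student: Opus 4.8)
The plan is to exploit the universal properties of $\Cal Z$-prekernels and $\Cal Z$-precokernels directly, first producing the comparison morphism $\sigma$ on the torsion-free side, then $\alpha$ on the torsion side, and finally checking that both are isomorphisms and that the square commutes. I would start from the observation that in the first sequence $\eta$ is a $\Cal Z$-precokernel of $\varepsilon$, and in the second $\eta'$ is a $\Cal Z$-precokernel of $\varepsilon'$. To get a map $F\to F'$, I want to show that $\eta'\varepsilon$ is $\Cal Z$-trivial: indeed $\varepsilon$ factors through $T\in\Cal T$ and, since $(\Cal T,\Cal F)$ is a pretorsion theory, every morphism $T\to F'$ with $F'\in\Cal F$ is $\Cal Z$-trivial by Axiom (1); hence $\eta'\varepsilon\colon T\to F'$ is $\Cal Z$-trivial. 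Wait — more carefully, $\eta'\varepsilon$ is a morphism $T\to F'$ with $T\in\Cal T$ and $F'\in\Cal F$, so it lies in $\Triv_{\Cal Z}(T,F')$ by Axiom (1). By the universal property of $\eta$ as a $\Cal Z$-precokernel of $\varepsilon$, there is a unique $\sigma\colon F\to F'$ with $\eta'=\sigma\eta$. Symmetrically (using that $\eta\varepsilon'$ is a $\Cal Z$-trivial morphism $T'\to F$), there is a unique $\tau\colon F'\to F$ with $\eta=\tau\eta'$. Then $\tau\sigma\eta=\tau\eta'=\eta=1_F\eta$, and since $\eta$ is an epimorphism (Proposition~\ref{precokernel-properties'}(a)) we get $\tau\sigma=1_F$; symmetrically $\sigma\tau=1_{F'}$. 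So $\sigma$ is the required isomorphism, and it is unique because $\eta$ is epic.

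Dually, working in $\Cal C^{\op}$ — where $(\Cal F,\Cal T)$ is a pretorsion theory and the two sequences become $\Cal Z$-preexact sequences with $\varepsilon,\varepsilon'$ now $\Cal Z$-precokernels — the same argument produces a unique isomorphism $\alpha\colon T\to T'$ with $\varepsilon=\varepsilon'\alpha$, equivalently $\varepsilon'=\varepsilon\alpha^{-1}$. To match the orientation in the statement I would phrase this directly: since $\varepsilon$ is a $\Cal Z$-prekernel of $\eta$ and $\varepsilon'\colon T'\to A$ satisfies that $\eta\varepsilon'$ is $\Cal Z$-trivial (again by Axiom (1), as $\eta\varepsilon'$ is a morphism $T'\to F$), there is a unique $\beta\colon T'\to T$ with $\varepsilon'=\varepsilon\beta$; symmetrically a unique $\alpha\colon T\to T'$ with $\varepsilon=\varepsilon'\alpha$; and $\varepsilon\beta\alpha=\varepsilon'\alpha=\varepsilon$ forces $\beta\alpha=1_T$ since $\varepsilon$ is monic (Proposition~\ref{prekernel-properties'}(a)), and similarly $\alpha\beta=1_{T'}$.

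It remains to check that the diagram commutes, i.e.\ that $\varepsilon'\alpha=\varepsilon$ (the left square) and $\sigma\eta=\eta'$ (the right square, with the middle vertical an identity). But $\varepsilon'\alpha=\varepsilon$ is exactly how $\alpha$ was chosen, and $\sigma\eta=\eta'$ is exactly how $\sigma$ was chosen; so both squares commute by construction. Uniqueness of $\alpha$ and $\sigma$ subject to commutativity is immediate: any $\alpha$ making the left square commute satisfies $\varepsilon'\alpha=\varepsilon$, and such an $\alpha$ is unique by the universal property of the $\Cal Z$-prekernel $\varepsilon'$ (equivalently because $\varepsilon'$ is monic); dually for $\sigma$ via the epimorphism $\eta$. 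The only point requiring a moment's care — the ``main obstacle,'' such as it is — is verifying the $\Cal Z$-triviality of the cross-terms $\eta'\varepsilon$ and $\eta\varepsilon'$ (and their duals), which is where Axiom (1) of the pretorsion theory enters in an essential way; everything else is a formal consequence of the prekernel/precokernel universal properties together with Propositions~\ref{prekernel-properties'} and~\ref{precokernel-properties'}.
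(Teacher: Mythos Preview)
Your proof is correct and follows essentially the same approach as the paper: both use Axiom~(1) to see that the cross-compositions $\eta'\varepsilon$ and $\eta\varepsilon'$ are $\Cal Z$-trivial, invoke the universal properties of $\Cal Z$-prekernels and $\Cal Z$-precokernels to produce the comparison maps, and then use that prekernels are monic and precokernels are epic (Propositions~\ref{prekernel-properties'} and~\ref{precokernel-properties'}) to conclude that these maps are mutually inverse isomorphisms. The only cosmetic difference is that the paper constructs $\alpha$ first and leaves $\sigma$ to the reader, whereas you treat $\sigma$ first and spell out both sides in full.
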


\begin{proof}
The morphism $\eta'\varepsilon\in \Hom(T,F')$ is $\cal Z$-trivial, because $T\in\cal T$ and $F'\in \cal F$. Since $\varepsilon'$ is a $\cal Z$-prekernel of $\eta'$ there is a unique $\alpha\colon T\to T'$ with $\varepsilon=\varepsilon'\alpha$. Similarly, since $\eta\varepsilon'$ is $\cal Z$-trivial and $\varepsilon$ is a $\cal Z$-prekernel of $\eta$, there is a unique $\beta\colon T'\to T$ with $\varepsilon'=\varepsilon\beta$.  { Since $\varepsilon$ and $\varepsilon '$ are monomorphisms it follows that $\alpha$ is an isomorphism (with inverse $\beta$)}. The proof that there is an isomorphism $\sigma \colon F \rightarrow F'$ is similar.

%Now $\varepsilon=\varepsilon'\alpha=\varepsilon\beta \alpha$ and thus, since $\varepsilon$ is a monomorphism by Proposition~\ref{prekernel-properties'}(1),
%we infer that $\beta\alpha=1_T$. Similarly, from the equality $\varepsilon'=\varepsilon\beta=\varepsilon'\alpha\beta$ and the fact that $\varepsilon'$ is a monomorphism, we get $\alpha\beta=1_{T'}$. This proves that $\alpha$ is an. 
\end{proof}

\begin{example}\label{Car}{\rm As an example, consider an object $B\in\Cal T$ and any short $\Cal Z$-preexact sequence $\xymatrix{
	A \ar[r]^f &  B \ar[r]^g &  C}$ with $A\in\Cal T$ and $C\in\Cal F$. For such a sequence, we have that $g$ is a morphism from an object in $\Cal T$ to an object in $\Cal F$, hence $g$ is $\Cal Z$-trivial. Then $f$ is an isomorphism by Lemma~\ref{xxx}(b).}
\end{example}	
	
Let $\cal C$ be a category and let $(\cal T,\cal F)$ be a pretorsion theory for $\cal C$. By definition and the Axiom of Choice for classes, for every object $X$ of $\cal C$, it is possible to fix a short $\cal Z$-preexact sequence $\xymatrix{
	t(X) \ar[r]^{\varepsilon_X} &  X \ar[r]^{\eta_X} &  f(X),}$
where $t(X)\in\cal T$ and $f(X)\in\cal F$. Moreover, in the choice of such a short $\Cal Z$-preexact sequence for each object $C$ of $\Cal C$, we can assume by Lemma \ref{exact-iso} that:
\begin{enumerate}		
\item  For every object $Z\in \cal Z = \cal T \cap \cal F$,  the chosen short $\cal Z$-preexact sequence is the sequence
	$\xymatrix{
	Z\ar[r]^{1_Z} &  Z \ar[r]^{1_Z} &  Z.}$
\item
For every object $T\in \Cal T\setminus\Cal Z$,  the chosen short $\cal Z$-preexact sequence for $T$ is a sequence of the form 
	$\xymatrix{
	T\ar[r]^{1_T} &  T \ar[r] &  F}$ for an object $F\in\Cal F$.
		\item  For every $F\in \Cal F$,  the chosen short $\cal Z$-preexact sequence is a sequence of the form
$\xymatrix{
	T\ar[r]&F\ar[r]^{1_F} &  F}$ for some $T\in\Cal T$.
\end{enumerate}

Now let $\varphi\colon A\to B$ be a morphism in $\cal C$ and consider the diagram 
\begin{equation}\label{double-naturality}
\xymatrix{
	t(A) \ar@{.>}[d]_{t(\varphi)} \ar[r]^{\varepsilon_A} &  A \ar[r]^{\eta_A}\ar[d]^\varphi &  f(A) \ar@{.>}[d]^{f(\varphi)}. \\t(B) \ar[r]^{\varepsilon_B} &  B \ar[r]^{\eta_B} &  f(B)}\tag{D}
\end{equation}
in which the two rows are the short $\cal Z$-preexact sequences chosen above. Since $\eta_B\varphi\varepsilon_A$ is $\cal Z$-trivial and $\varepsilon_B$ is a $\cal Z$-prekernel of $\eta_B$, there exists a unique (dotted) morphism $t(\varphi)\colon t(A)\to t(B)$ satisfying $\varepsilon_Bt(\varphi)=\varphi\varepsilon_A$. Dually, since $\eta_A$ is a $\cal Z$-precokernel of $\varepsilon_A$, there is a unique (dotted) morphism $f(\varphi)\colon f(A)\to f(B)$ such that $f(\varphi)\eta_A=\eta_B\varphi$. These assignments are clearly functors $t \colon {\Cal C} \rightarrow {\Cal T}$ and $f \colon {\Cal C} \rightarrow {\Cal F}$, and they actually determine the following adjunctions:

\begin{proposition}\label{adjoint}
Let $(\cal T,\cal F)$ be a pretorsion theory in a category $\cal C$. 
Then:
\begin{enumerate}
	\item[{\rm (a)}] The functor $f \colon {\Cal C} \rightarrow {\Cal F}$ is a left-inverse left-adjoint of the (full) embedding $e_{\Cal F}\colon\Cal F \to\Cal C$.		
.
		\item[{\rm (b)}] The functor $t \colon {\Cal C} \rightarrow {\Cal T}$ is a left-inverse right-adjoint of the (full) embedding $e_{\Cal T}\colon\Cal T\to\Cal C$
		\end{enumerate}
\end{proposition}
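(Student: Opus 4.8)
The plan is to establish the two adjunctions in Proposition~\ref{adjoint} directly from the universal properties built into the chosen short $\cal Z$-preexact sequences, using the naturality already observed in diagram~\eqref{double-naturality}. I will prove (a) in detail and obtain (b) by duality, passing to $\Cal C^{\op}$ where $(\Cal F,\Cal T)$ is a pretorsion theory.

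First I would verify the left-inverse claim for $f$. By the normalization conventions (2) and (3) on the chosen sequences, for $F \in \Cal F$ the chosen sequence is $\xymatrix{T \ar[r] & F \ar[r]^{1_F} & F}$, so $f(F) = F$ with $\eta_F = 1_F$; and for a morphism $\psi\colon F \to F'$ between objects of $\Cal F$, the defining equation $f(\psi)\eta_F = \eta_{F'}\psi$ reads $f(\psi) = \psi$. Hence $f \circ e_{\Cal F} = \mathrm{id}_{\Cal F}$. Next I would exhibit the adjunction $f \dashv e_{\Cal F}$ by producing, for each object $A$ of $\Cal C$, the unit $\eta_A \colon A \to e_{\Cal F}(f(A)) = f(A)$ and checking the universal property: given $F \in \Cal F$ and a morphism $h\colon A \to F$, I must show there is a unique $\bar h\colon f(A) \to F$ with $\bar h\,\eta_A = h$. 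Existence and uniqueness are exactly the precokernel universal property of $\eta_A$ (which is a $\cal Z$-precokernel of $\varepsilon_A$): one needs that $h\varepsilon_A$ is $\cal Z$-trivial, and this holds because $\varepsilon_A$ has domain $t(A) \in \Cal T$ and $h$ has codomain $F \in \Cal F$, so $h\varepsilon_A \in \Hom_{\Cal C}(t(A),F) = \Triv_{\Cal Z}(t(A),F)$ by Axiom~(1). Then $\eta_A$ is indeed a universal arrow from $A$ to $e_{\Cal F}$, giving the adjunction; naturality of $\eta$ is precisely the commutativity of the right-hand square in~\eqref{double-naturality}, already established.

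For part (b) I would invoke the duality remarked after Definition~\ref{pretorsion-theory-def}: $(\Cal F,\Cal T)$ is a pretorsion theory in $\Cal C^{\op}$, and under the identification of short $\cal Z$-preexact sequences in $\Cal C$ with their reverses in $\Cal C^{\op}$, the functor $t\colon \Cal C \to \Cal T$ becomes the analogue of $f$ for the pretorsion theory $(\Cal F,\Cal T)$ in $\Cal C^{\op}$ (here one uses that the normalization conventions (1) and (2) are self-dual to (1) and (3) under reversal). Applying part (a) in $\Cal C^{\op}$ then yields that $t$ is a left-inverse left-adjoint of $e_{\Cal T}\colon \Cal T \to \Cal C^{\op}$, which translates back to the statement that $t$ is a left-inverse right-adjoint of $e_{\Cal T}\colon \Cal T \to \Cal C$, with counit $\varepsilon_A\colon e_{\Cal T}(t(A)) \to A$ and the required universal property following from the prekernel property of $\varepsilon_A$ together with Axiom~(1) applied to $\Hom_{\Cal C}(T, f(A))$.

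The only mildly delicate point, and the one I would be most careful about, is the bookkeeping that the functors $t$ and $f$ constructed from the \emph{chosen} (normalized) sequences genuinely satisfy $f\circ e_{\Cal F} = \mathrm{id}_{\Cal F}$ and $t\circ e_{\Cal T} = \mathrm{id}_{\Cal T}$ on the nose rather than merely up to natural isomorphism: this is what makes the adjunctions \emph{left-inverse} ones, and it relies squarely on the normalization conventions (1)--(3) fixed before diagram~\eqref{double-naturality}, so I would spell out the reduction of $f(\psi)$ to $\psi$ (and dually $t(\psi)$ to $\psi$) on morphisms between objects of $\Cal F$ (resp.\ $\Cal T$) from the defining equations. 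Everything else is a routine unwinding of the prekernel/precokernel universal properties and Axiom~(1).
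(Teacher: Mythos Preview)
Your proposal is correct and follows essentially the same approach as the paper: verify the universal arrow property of $\eta_A$ using that $\eta_A$ is a $\Cal Z$-precokernel of $\varepsilon_A$ together with Axiom~(1), deduce the left-inverse equality $f\circ e_{\Cal F}=1_{\Cal F}$ from the normalization conventions, and obtain (b) by duality. One small slip: the conventions you need for $F\in\Cal F$ are (1) and (3), not (2) and (3), since (2) concerns objects of $\Cal T\setminus\Cal Z$; the paper also cites (1) and (3) for this step.
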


\begin{proof}
We only need to check (a), the proof of (b) is dual. 

Given an object $A\in\cal C$, consider any morphism $\varphi  \colon A \rightarrow e_{\cal F}(F)$, where $F \in {\Cal F}$.
$$\label{functors}
\xymatrix{
	t(A) \ar[r]^{\varepsilon_A}&  A \ar[r]^{\eta_A}\ar[d]_\varphi &  e_{\cal F} f(A)\ar@{.>}[ld]^{e_{\cal F}(\overline{\varphi})}\\
	 &  e_{\cal F}(F)  &  }
$$
Since $t(A) \in \Cal T$, the composite $\varphi \varepsilon_A$ is $\cal Z$-trivial. By the universal property of the $\cal Z$-precokernel $\eta_A$ of $\varepsilon_A$, there exists a unique morphism $\overline{\varphi} \colon f(A) \rightarrow F$ in $\mathcal F$ such that $e_{\cal F}( \overline{\varphi}) \eta_A = \varphi$. This shows that the functor $f \colon {\Cal C} \rightarrow {\Cal F}$ is left adjoint to $e_{\cal F} \colon  {\Cal F} \rightarrow {\Cal C}$, with $\eta$ the unit of the adjunction (the functor $f$ is thus uniquely determined, up to natural isomorphism). The equality $f\circ e_{\Cal F}=1_{\Cal F}$ follows from the choices (1) and (3) above.
\end{proof}
{
Thanks to Propositions \ref{adjoint}, \ref{prekernel-properties'} and \ref{precokernel-properties'} we immediately see that
\begin{corollary}\label{reflectiveness}
Given any pretorsion theory $(\cal T,\cal F)$ in a category $\cal C$, the torsion-free subcategory $\cal F$ is an epireflective subcategory of $\cal C$
\begin{equation}\label{adj-free}
\xymatrix@=30pt{
{\cal C \, } \ar@<1ex>[r]_-{^{\perp}}^-{f} & {\cal F,}
\ar@<1ex>[l]^-{e_{\cal F}} }
 \end{equation}
i.e., a reflective subcategory with the property that each component of the unit $\eta$ of the adjunction \eqref{adj-free} is an epimorphism.
 Dually, the torsion subcategory $\cal T$ is monocoreflective in $\mathcal C$
 \begin{equation}\label{adj-tor}
\xymatrix@=30pt{
{\cal T \, } \ar@<1ex>[r]_-{^{\perp}}^-{e_{\cal T}} & {\cal C,}
\ar@<1ex>[l]^-{t} }
 \end{equation}
i.e., a coreflective subcategory having the property that each component of the counit $\varepsilon$ of the adjunction \eqref{adj-tor} is a monomorphism.
\end{corollary}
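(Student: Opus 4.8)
The plan is to read the statement off directly from Proposition~\ref{adjoint}, combined with Propositions~\ref{prekernel-properties'}(a) and~\ref{precokernel-properties'}(a). Proposition~\ref{adjoint}(a) already gives that $f\colon\cal C\to\cal F$ is left adjoint to the full embedding $e_{\cal F}$, so that \eqref{adj-free} exhibits $\cal F$ as a reflective subcategory of $\cal C$; the only thing left to verify is that each component of the unit $\eta$ is an epimorphism. But, by the construction carried out just before Proposition~\ref{adjoint}, the component of $\eta$ at an object $X$ is exactly the map $\eta_X\colon X\to f(X)$ of the chosen short $\cal Z$-preexact sequence for $X$, and in such a sequence $\eta_X$ is a $\cal Z$-precokernel of $\varepsilon_X$. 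Since every $\cal Z$-precokernel is an epimorphism by Proposition~\ref{precokernel-properties'}(a), each $\eta_X$ is an epimorphism, and $\cal F$ is epireflective.

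For the second assertion I would argue dually. Proposition~\ref{adjoint}(b) gives that $t\colon\cal C\to\cal T$ is right adjoint to $e_{\cal T}$, so that \eqref{adj-tor} exhibits $\cal T$ as a coreflective subcategory of $\cal C$; its counit $\varepsilon$ has component $\varepsilon_X\colon t(X)\to X$ at $X$, which is a $\cal Z$-prekernel of $\eta_X$ and hence a monomorphism by Proposition~\ref{prekernel-properties'}(a). Thus $\cal T$ is monocoreflective. Alternatively, one observes that $(\cal F,\cal T)$ is a pretorsion theory in $\Cal C^{\op}$ and applies the first half there, using that $\cal Z$-precokernels in $\Cal C^{\op}$ are precisely $\cal Z$-prekernels in $\cal C$.

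There is essentially no real obstacle here: all the substance has already been packaged into Proposition~\ref{adjoint} and into the prekernel/precokernel propositions. The one point requiring a little care --- and the only thing I would spell out explicitly --- is the identification of the unit of $f\dashv e_{\cal F}$ and the counit of $e_{\cal T}\dashv t$ with the maps $\eta_X$ and $\varepsilon_X$ coming from the fixed short $\cal Z$-preexact sequences; this is immediate from the way the adjunctions were produced in the proof of Proposition~\ref{adjoint}, where $\eta$ was explicitly taken as the unit (and dually for $\varepsilon$).
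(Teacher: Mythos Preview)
Your proposal is correct and follows exactly the same approach as the paper, which simply states that the corollary is immediate from Propositions~\ref{adjoint}, \ref{prekernel-properties'} and~\ref{precokernel-properties'}. Your additional remark identifying the unit and counit of the adjunctions with the maps $\eta_X$ and $\varepsilon_X$ from the chosen short $\cal Z$-preexact sequences is a helpful clarification that the paper leaves implicit.
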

As a consequence, we get that the torsion-free subcategory $\cal F$ is closed under limits in $\mathcal C$, whereas the torsion subcategory $\cal T$ is closed under colimits in $\mathcal C$. This extends the classical situation of the abelian setting \cite{Dickson} and of the homological one \cite{BG}.
 From $t\circ e_{\Cal T}=1_{\Cal T}$ and $f\circ e_{\Cal F}=1_{\Cal F}$, 
we get that $E_{\Cal T}:=e_{\Cal T}\circ t$ and $E_{\Cal F}:=e_{\Cal F}\circ f$ are two idempotent functors $\Cal C\to\Cal C$
		with natural transformations $\varepsilon\colon E_{\Cal T}\to 1_{\Cal C}$ and $\eta\colon 1_{\Cal C}\to E_{\Cal F}$.
		The idempotent functor $E_{\Cal T}\colon\Cal C\to\Cal C$ is a subfunctor of the identity functor on $\Cal C$ (by Proposition \ref{prekernel-properties'}).}
\begin{lemma}\label{utilissimo}
Let $\cal Z\subseteq \cal F$ be full subcategories of a category $ \cal C$. Assume that $\cal Z$ is closed under retracts in $\cal C$ and that $\cal F$ is reflective in $\cal C$. If $T$ is an object of $\cal C$ and the $T$-component $\eta_T:T\to f(T)$ of the unit $\eta$ of the adjunction 
\begin{equation*}
\xymatrix@=30pt{
	{\cal C \, } \ar@<1ex>[r]_-{^{\perp}}^-{f} & {\cal F}
	\ar@<1ex>[l]^-{e_{\cal F}} }
\end{equation*}
is $\cal Z$-trivial, then $f(T)\in \cal Z$.
\end{lemma}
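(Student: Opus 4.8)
The plan is to realize $f(T)$ as a retract of an object of $\cal Z$, and then to conclude by the hypothesis that $\cal Z$ is closed under retracts. Throughout I suppress the embedding $e_{\cal F}\colon\cal F\to\cal C$.

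First I would unravel the hypothesis: saying that $\eta_T\colon T\to f(T)$ is $\cal Z$-trivial means that there are an object $Z\in\cal Z$ and morphisms $a\colon T\to Z$ and $b\colon Z\to f(T)$ in $\cal C$ with $\eta_T=b\,a$. Since $\cal Z\subseteq\cal F$, the object $Z$ lies in the reflective subcategory $\cal F$, so the universal property of the unit component $\eta_T$ applied to $a\colon T\to Z$ produces a unique morphism $\overline a\colon f(T)\to Z$ in $\cal F$ such that $\overline a\,\eta_T=a$.

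Next I would assemble the retraction. The endomorphism $b\,\overline a\colon f(T)\to f(T)$ (a morphism of $\cal F$, as $\cal F$ is full and $f(T),Z\in\cal F$) satisfies $(b\,\overline a)\,\eta_T=b\,(\overline a\,\eta_T)=b\,a=\eta_T$, and of course $1_{f(T)}\,\eta_T=\eta_T$ too. Applying the \emph{uniqueness} clause in the universal property of $\eta_T$ --- to the morphism $\eta_T$ itself, regarded as a morphism from $T$ into the object $f(T)$ of $\cal F$ --- I would deduce $b\,\overline a=1_{f(T)}$. Hence $\overline a\colon f(T)\to Z$ together with $b\colon Z\to f(T)$ exhibits $f(T)$ as a retract of $Z$; since $\cal Z$ is closed under retracts in $\cal C$, this gives $f(T)\in\cal Z$, as wanted.

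I do not expect any genuine obstacle. The one point deserving care is that the same universal property of $\eta_T$ gets used twice --- once to factor $a$ through $\eta_T$, and once (through its uniqueness part) to identify $b\,\overline a$ with the identity --- which is legitimate exactly because both $Z$ and $f(T)$ lie in $\cal F$. It is worth noting that one does not need $\cal F$ to be epireflective here: it is the uniqueness in the reflection, and not any epimorphism property of $\eta_T$, that makes the argument run.
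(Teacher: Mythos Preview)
Your proof is correct and follows essentially the same line as the paper's: factor $\eta_T$ through some $Z\in\cal Z$, use the universal property of $\eta_T$ (since $Z\in\cal F$) to get a morphism $f(T)\to Z$, and observe that the resulting round-trip $f(T)\to Z\to f(T)$ equals the identity, so $f(T)$ is a retract of $Z$. Your explicit emphasis that it is the \emph{uniqueness} clause of the reflection---rather than any epimorphism property of $\eta_T$---that yields $b\,\overline a = 1_{f(T)}$ is a nice clarification; the paper makes the same deduction but leaves this point implicit.
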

\begin{proof}
By assumption there is a commutative triangle $$
\xymatrix{T \ar[rr]^{\eta_T} \ar[dr]_\alpha& & f(T) \\
	&Z  \ar[ur]_\beta  & 
}
$$
where $Z \in \Cal Z$.  The universal property of $\eta_T$ and the fact that $Z \in \Cal F$ imply that there exists a unique $\gamma \colon f(T) \rightarrow Z$ such that $\gamma \eta_T = \alpha$. It follows that $$ \beta \gamma \eta_T = \beta \alpha = \eta_T = 1_{f(T)} \eta _T,$$
and $ \beta \gamma = 1_{f(T)}$. Since $\cal Z$ is closed in $\mathcal C$ under retracts, we conclude that $f(T)\in \cal Z$.
\end{proof}	
		\begin{proposition}\label{importante!} { The following conditions hold in any pretorsion theory $(\Cal T,\cal F)$ in a category $\Cal C$:
	\begin{enumerate}
	\item[{\rm (a)}]
	$f(T)\in\Cal Z$  for every $T\in\cal T$.
	\item[{\rm (b)}]  For every $T\in\cal T$, there exists a short $\Cal Z$-preexact sequence of the type $$\xymatrix{
	T \ar[r]^{1_T} &  T \ar[r]^{\eta_T} &  Z}$$ with $Z\in\Cal Z$.
	%\item[{\rm (c)}]  For every $T\in\cal T$, the identity morphism $1_T\colon T\to T$ has a $\cal Z$-precokernel $\eta\colon T\to Z$ with $Z\in\Cal Z$.
	\item[{\rm (c)}] The restriction $\hat{f} \colon\Cal T\to\cal Z$ of $f$ is the left adjoint of the embedding $\Cal Z\to\Cal T$. 
	\end{enumerate} }
	\end{proposition}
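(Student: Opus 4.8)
The plan is to deduce all three parts from the lemmas just established, using the fact that in a pretorsion theory the chosen sequence attached to an object $T\in\Cal T$ has a very rigid form.

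\emph{Proof sketch of (a).} First I would invoke Lemma~\ref{utilissimo} with the subcategories $\Cal Z\subseteq\Cal F\subseteq\Cal C$. These satisfy the hypotheses: $\Cal Z$ is closed under retracts by Corollary~\ref{retract}, and $\Cal F$ is (epi)reflective in $\Cal C$ by Corollary~\ref{reflectiveness}, with reflection $f$ and unit $\eta$. So it suffices to check that for $T\in\Cal T$ the unit component $\eta_T\colon T\to f(T)$ is $\Cal Z$-trivial. But $\eta_T$ is exactly the $\Cal Z$-precokernel $\eta_T$ appearing in the chosen short $\Cal Z$-preexact sequence $\xymatrix{t(T)\ar[r]^{\varepsilon_T}&T\ar[r]^{\eta_T}&f(T)}$ (this identification is precisely the content of the adjunction in Proposition~\ref{adjoint}(a), whose unit is $\eta$). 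Since $f(T)\in\Cal F$ and $T\in\Cal T$, the morphism $\eta_T\in\Hom_{\Cal C}(T,f(T))$ is $\Cal Z$-trivial by Axiom~(1) of Definition~\ref{pretorsion-theory-def}. Hence Lemma~\ref{utilissimo} gives $f(T)\in\Cal Z$.

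\emph{Proof sketch of (b).} Given $T\in\cal T$, consider the chosen short $\Cal Z$-preexact sequence $\xymatrix{t(T)\ar[r]^{\varepsilon_T}&T\ar[r]^{\eta_T}&f(T)}$. Because $\eta_T$ is a morphism from $T\in\Cal T$ to $f(T)\in\Cal F$, it is $\Cal Z$-trivial, and since $\varepsilon_T$ is a $\Cal Z$-prekernel of $\eta_T$, Lemma~\ref{xxx}(b) forces $\varepsilon_T$ to be an isomorphism. Then Lemma~\ref{exact-iso} lets me replace $t(T)$ by $T$ along $\varepsilon_T^{-1}$, producing the $\Cal Z$-preexact sequence $\xymatrix{T\ar[r]^{1_T}&T\ar[r]^{\eta_T\varepsilon_T}&f(T)}$; relabelling $Z:=f(T)$, which lies in $\Cal Z$ by part (a), gives the sequence $\xymatrix{T\ar[r]^{1_T}&T\ar[r]^{\eta_T}&Z}$ as claimed. (In fact the chosen sequences were set up in condition (2) of the list preceding Proposition~\ref{adjoint} to already have the form $\xymatrix{T\ar[r]^{1_T}&T\ar[r]&F}$; part (a) now identifies that $F$ as an object of $\Cal Z$.)

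\emph{Proof sketch of (c).} For the last part I would restrict the adjunction of Proposition~\ref{adjoint}(a) along the embedding $e_{\Cal T}\colon\Cal T\to\Cal C$. By part (a), the composite $f\circ e_{\Cal T}$ lands in $\Cal Z$, so it corestricts to a functor $\hat f\colon\Cal T\to\Cal Z$. For $T\in\Cal T$ and $Z\in\Cal Z\subseteq\Cal F$ one has natural bijections
\[
\Hom_{\Cal Z}(\hat f(T),Z)=\Hom_{\Cal F}(f(T),Z)\cong\Hom_{\Cal C}(T,Z)=\Hom_{\Cal T}(T,Z),
\]
where the middle isomorphism is the unit-$\eta$ adjunction of Proposition~\ref{adjoint}(a) (naturality in both variables is inherited), the outer equalities hold because $\Cal Z$ is full in $\Cal F$ and $\Cal T$ is full in $\Cal C$. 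Hence $\hat f$ is left adjoint to the inclusion $\Cal Z\hookrightarrow\Cal T$, with unit still given by the components $\eta_T$.

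The only subtle point — and the step I would be most careful about — is the identification in (a) of the reflection unit component $\eta_T$ with the $\Cal Z$-precokernel $\eta_T$ of the chosen sequence; this is legitimate because the proof of Proposition~\ref{adjoint}(a) exhibits precisely the precokernels $\eta_A$ as the unit of the reflection, and reflection units are unique up to the canonical natural isomorphism. Everything else is a routine application of Lemmas~\ref{xxx}, \ref{exact-iso}, \ref{utilissimo} and Corollaries~\ref{retract}, \ref{reflectiveness}.
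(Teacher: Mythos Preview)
Your proof is correct and follows essentially the same approach as the paper: part (a) via Lemma~\ref{utilissimo} together with Corollary~\ref{retract} and the $\Cal Z$-triviality of $\eta_T$ coming from Axiom~(1); part (b) via Lemma~\ref{xxx}(b) showing $\varepsilon_T$ is an isomorphism and then Lemma~\ref{exact-iso}; part (c) by restricting the adjunction of Proposition~\ref{adjoint}(a). One small slip: in (b), after applying Lemma~\ref{exact-iso} with $\alpha=\varepsilon_T^{-1}$, the second morphism of the resulting sequence is $\eta_T$, not $\eta_T\varepsilon_T$ (the latter has the wrong domain $t(T)$); this is just a typo and does not affect the argument.
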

	
	\begin{proof} 
	$(a)$ For every $T\in\cal T$, the morphism $\eta_T:T\to f(T)$ is $\cal Z$-trivial, since $T\in \cal T$ and $f(T)\in \cal F$. Moreover, by Corollary \ref{retract}, $\cal Z$ is closed under retracts. Then apply Lemma \ref{utilissimo} to get the conclusion. 
	
{(b) For any $T \in \cal T$ there is a short $\cal Z$-preexact sequence $$\xymatrix{t(T) \ar[r]^f & T \ar[r]^-\eta & f(T) }$$ with $f(T)  \in \cal Z$  by (a). The morphism $f$ is an isomorphism, and the sequence required in (b) is the short $\cal Z$-preexact sequence $\xymatrix{T \ar@{=}[r] & T \ar[r]^\eta & f(T)  }$.
	
	%(c) Any morphism $\eta\colon T\to Z$ with $Z\in\Cal Z$ has the identity $1_T\colon T\to T$ as its $\Cal Z$-prekernel. Since (b) holds, we have that $Z\cong f(T)$. Thus $f(T)\in\Cal Z$.
	
	(c) If $f(T)\in\Cal Z$  for every $T\in\cal T$, we can restrict the functor $f\colon \Cal C\to\Cal F$ to the functor {$\hat{f} \colon \Cal T\to\Cal Z$, which is }then the { left}  adjoint of the inclusion $\cal Z\to\cal T$. 
	}
	\end{proof}

In particular, we have the following 
		\begin{proposition}\label{caratterizzazione} Let $\cal C$ be a category and let $(\cal T,\cal F)$ be a pretorsion theory for $\cal C$. The following conditions are equivalent for an object $C$ of $\Cal C$:\begin{enumerate}\item[{\rm (a)}] $C\in\Cal T$. 
	\item[{\rm (b)}] $C\cong t(C)$.
	\item[{\rm (c)}] $C\cong t(X)$ for some object $X$ in $\Cal C$.
\end{enumerate}\end{proposition}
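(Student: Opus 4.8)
The plan is to prove the chain of implications $(a)\Rightarrow(b)\Rightarrow(c)\Rightarrow(a)$. The implication $(a)\Rightarrow(b)$ is essentially Example~\ref{Car}: if $C\in\Cal T$, take the chosen short $\Cal Z$-preexact sequence $\xymatrix{t(C)\ar[r]^{\varepsilon_C}&C\ar[r]^{\eta_C}&f(C)}$; since $t(C)\in\Cal T$ and $f(C)\in\Cal F$, the morphism $\eta_C$ is $\Cal Z$-trivial by Axiom~(1) of Definition~\ref{pretorsion-theory-def}, and then Lemma~\ref{xxx}(b) (with $\varepsilon_C$ a $\Cal Z$-prekernel of $\eta_C$) forces $\varepsilon_C$ to be an isomorphism, so $C\cong t(C)$.

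Next, $(b)\Rightarrow(c)$ is immediate: taking $X=C$ shows $C\cong t(C)=t(X)$. So the only remaining task is $(c)\Rightarrow(a)$, which I expect to be the main point (though still short). Suppose $C\cong t(X)$ for some object $X$. By construction the functor $t$ takes values in $\Cal T$, i.e.\ $t(X)\in\Cal T$; since $\Cal T$ is replete (closed under isomorphism) by the definition of pretorsion theory, $C\cong t(X)\in\Cal T$ gives $C\in\Cal T$. This uses nothing beyond the definition of the functor $t$ constructed before Proposition~\ref{adjoint} and repleteness of $\Cal T$.

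The main obstacle, such as it is, is simply making sure the cited ingredients line up: that $\Cal Z$-triviality of $\eta_C$ in the $(a)\Rightarrow(b)$ step is justified by Axiom~(1) (which requires $t(C)\in\Cal T$, true by choice of the sequence, and $f(C)\in\Cal F$, true likewise), and that Lemma~\ref{xxx}(b) applies because $\varepsilon_C$ is by definition a $\Cal Z$-prekernel of $\eta_C$. No delicate computation is involved; the proposition is a clean repackaging of Example~\ref{Car} together with repleteness of $\Cal T$. I would write it up in three short lines following exactly the cyclic pattern above, perhaps remarking that by duality one obtains the analogous characterization of objects of $\Cal F$ via the functor $f$.
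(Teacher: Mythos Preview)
Your cyclic argument is correct and matches the paper's implicit reasoning (the proposition is stated there as an immediate consequence of the preceding material, without a written-out proof). One small slip to fix in the $(a)\Rightarrow(b)$ step: you justify the $\Cal Z$-triviality of $\eta_C\colon C\to f(C)$ by ``$t(C)\in\Cal T$ and $f(C)\in\Cal F$'', but the domain of $\eta_C$ is $C$, not $t(C)$; what Axiom~(1) requires is the hypothesis $C\in\Cal T$ together with $f(C)\in\Cal F$---precisely as in Example~\ref{Car}, which you cite. With the conventions fixed just before Proposition~\ref{adjoint} (choice~(2)) one even has $t(C)=C$ for $C\in\Cal T$, making $(a)\Rightarrow(b)$ a tautology; your route via Example~\ref{Car} and Lemma~\ref{xxx}(b) is an equally valid, choice-independent alternative.
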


%\begin{proof} (a)${}\Rightarrow{}$(b) Suppose $C\in\Cal T$ and consider the short $\cal Z$-preexact sequence 
%	$$\xymatrix{
%		t(C) \ar[r]^{\varepsilon_C} &  C \ar[r]^{\eta_C} &  f(C)}.$$
%{	The morphism $\eta_C$ is then $\cal Z$-trivial, thus $\varepsilon_C$ is an isomorphism, by Lemma \ref{xxx}(b). }
%(b)${}\Rightarrow{}$(c) and (c)${}\Rightarrow{}$(a) are trivial, because $\cal T$ is replete.
	%\end{proof}
	
Dually, we have:

\begin{proposition}
Let $\cal C$ be a category and let $(\cal T,\cal F)$ be a pretorsion theory for $\cal C$. The following conditions are equivalent for an object $C $ of $\cal C$: 
\begin{enumerate}
	\item[{\rm (a)}]  $C\in \cal F$.
	\item[{\rm (b)}]   $C\cong f(C)$.
	\item[{\rm (c)}]  $C\cong f(X)$ for some object $X$ in $\cal C$. 
\end{enumerate}
\end{proposition}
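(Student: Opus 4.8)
The plan is to observe that this statement is exactly the formal dual of Proposition~\ref{caratterizzazione}, and to deduce it from that result by passing to the opposite category. Recall from the discussion after Definition~\ref{pretorsion-theory-def} that if $(\cal T,\cal F)$ is a pretorsion theory in $\cal C$, then $(\cal F,\cal T)$ is a pretorsion theory in $\cal C^{\op}$; moreover, since $\cal Z$-prekernels in $\cal C^{\op}$ are $\cal Z$-precokernels in $\cal C$ and conversely, a short $\cal Z$-preexact sequence $\xymatrix{A \ar[r] & B \ar[r] & C}$ in $\cal C$ with $A\in\cal T$, $C\in\cal F$ becomes, read backwards, a short $\cal Z$-preexact sequence $\xymatrix{C \ar[r] & B \ar[r] & A}$ in $\cal C^{\op}$ with $C\in\cal F$ and $A\in\cal T$. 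Hence the torsion functor associated with the pretorsion theory $(\cal F,\cal T)$ in $\cal C^{\op}$, built as in diagram~\eqref{double-naturality}, is precisely the functor $f\colon\cal C\to\cal F$ (now viewed as a functor $\cal C^{\op}\to\cal F^{\op}$). Applying Proposition~\ref{caratterizzazione} to the pretorsion theory $(\cal F,\cal T)$ in $\cal C^{\op}$ then gives the equivalences of (a), (b), (c) verbatim.

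The only point requiring a line of care is the compatibility of the \emph{choices} of short $\cal Z$-preexact sequences: in Proposition~\ref{caratterizzazione} one fixes, for each object, a distinguished short $\cal Z$-preexact sequence normalized by conditions (1)--(3), and one must check that the distinguished sequences chosen in $\cal C$ reverse to an admissible family of choices in $\cal C^{\op}$. This is immediate from Lemma~\ref{exact-iso} together with the symmetry of conditions (1)--(3) under reversal: condition~(1) is self-dual, while conditions~(2) and~(3) are interchanged, which is exactly what one needs for the opposite pretorsion theory.

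For completeness I would also record the direct argument, which is short. For (a)$\Rightarrow$(b): if $C\in\cal F$, then by Proposition~\ref{adjoint}(a) we have $f\circ e_{\cal F}=1_{\cal F}$, so $f(C)=C$ and in particular $C\cong f(C)$. The implication (b)$\Rightarrow$(c) is trivial, taking $X=C$. For (c)$\Rightarrow$(a): the functor $f$ takes values in $\cal F$ by construction, so $f(X)\in\cal F$; since $\cal F$ is replete, $C\cong f(X)$ forces $C\in\cal F$.

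There is essentially no obstacle here: the content is packaged entirely in Proposition~\ref{adjoint} and Proposition~\ref{caratterizzazione}, and the only thing to be mildly attentive about is the bookkeeping of duality for the normalized choices of short $\cal Z$-preexact sequences, as explained above.
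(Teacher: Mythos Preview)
Your proposal is correct and matches the paper's approach: the paper simply writes ``Dually, we have:'' before this proposition and gives no further argument, so your duality derivation from Proposition~\ref{caratterizzazione} is exactly what is intended. Your additional direct proof via Proposition~\ref{adjoint}(a) and repleteness of $\cal F$ is also fine and arguably cleaner than unwinding the duality bookkeeping.
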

 Let $\cal Z\subseteq \cal F$ be full subcategories of a category $\cal C$. Set
$$
\cal F^{\perp_{\cal Z}}:=\{T\in \cal C\mid \Hom_{\Cal C}(T,F)=\Triv_{\Cal Z}(T,F), \mbox{for every }F\in \cal F \}.
$$
\begin{proposition}[{Cf. \cite[Lemma 4.7]{BG}}]
Let $\cal Z\subseteq \cal F$ be full subcategories of a category $\cal C$. Assume that $\cal Z$ is closed under retracts in $\cal C$ and that $\cal F$ is reflective in $\cal C$. Let $f:\cal C\to \cal F$ denote the left adjoint of the inclusion  $\cal F\subseteq \cal C$. Then
$$
\cal F^{\perp_\cal Z}=\{T\in \cal C\mid f(T)\in \cal Z \}.
$$
\end{proposition}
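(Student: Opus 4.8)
The plan is to prove the two set inclusions separately; both directions reduce quickly to the universal property of the reflection $f$ together with Lemma~\ref{utilissimo}, so I do not expect a genuine obstacle, only some care about which hypotheses are actually used.

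First I would treat the inclusion $\{T\in\cal C\mid f(T)\in\cal Z\}\subseteq\cal F^{\perp_{\cal Z}}$. Fix an object $T$ with $f(T)\in\cal Z$, an arbitrary $F\in\cal F$, and an arbitrary morphism $h\colon T\to F$. Because $F\in\cal F$ and $\eta_T\colon T\to f(T)$ is the unit of the reflection, there is a (unique) morphism $\overline h\colon f(T)\to F$ with $h=\overline h\,\eta_T$; since $f(T)\in\cal Z$, this displays $h$ as a morphism factoring through an object of $\cal Z$, hence $h$ is $\cal Z$-trivial. Thus $\Hom_{\cal C}(T,F)\subseteq\Triv_{\cal Z}(T,F)$, and as the reverse inclusion is automatic we obtain $\Hom_{\cal C}(T,F)=\Triv_{\cal Z}(T,F)$ for every $F\in\cal F$, i.e. $T\in\cal F^{\perp_{\cal Z}}$.

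Next I would treat the reverse inclusion $\cal F^{\perp_{\cal Z}}\subseteq\{T\in\cal C\mid f(T)\in\cal Z\}$. Take $T\in\cal F^{\perp_{\cal Z}}$ and observe that $f(T)\in\cal F$, so the defining equality of $\cal F^{\perp_{\cal Z}}$ applied to the object $F=f(T)$ gives $\eta_T\in\Hom_{\cal C}(T,f(T))=\Triv_{\cal Z}(T,f(T))$; hence $\eta_T$ is $\cal Z$-trivial. All hypotheses of Lemma~\ref{utilissimo} are now in force ($\cal Z\subseteq\cal F$ is a full subcategory, $\cal Z$ is closed under retracts in $\cal C$, $\cal F$ is reflective in $\cal C$, and $\eta_T$ is $\cal Z$-trivial), so that lemma yields $f(T)\in\cal Z$, as required.

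The one place calling for a little care is the first direction, where it is essential that the codomain $F$ lies in $\cal F$ in order to factor $h$ through the unit $\eta_T$ — without this, the reflection provides no such factorization. It is also worth noting what is \emph{not} needed: we use neither epireflectiveness of $\cal F$ nor the existence of a pretorsion theory, the statement being purely about a reflective subcategory $\cal F$ containing a retract-closed $\cal Z$. I expect the written proof to be only a few lines long.
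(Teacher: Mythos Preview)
Your proof is correct and follows essentially the same approach as the paper: both directions use the universal property of the unit $\eta_T$, with the second direction invoking Lemma~\ref{utilissimo} once $\eta_T$ is seen to be $\cal Z$-trivial. Your additional remarks about which hypotheses are actually used are accurate and do not alter the argument.
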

\begin{proof}
For any object $A$ in $\cal C$, let $\eta_A:A\to f(A)$ be the $A$-component of the unit of the adjunction. First take an object $T$ in $\cal C$ such that $f(T)\in \cal Z$. Given an object $F\in \cal F$ and any morphism $\alpha:T\to F$, there is a unique morphism $\overline{\alpha}:f(T)\to F$ satisfying $\alpha=\overline{\alpha}\eta_T$. Since $f(T)\in \cal Z$, it follows immediately that $\alpha$ is $\cal Z$-trivial. Conversely, take an object $T\in \cal F^{\perp_{\cal Z}}$. In particular, the morphism $\eta_T:T\to f(T)$ is $\cal Z$-trivial. Since $\cal Z$ is closed under retracts, it suffices to apply Lemma \ref{utilissimo} to get the conclusion. 
\end{proof}

The classical identity $t(A/t(A))=0$ for { the radical $t$ associated with a torsion theory in an abelian category} now becomes:

\begin{proposition}\label{noncommutano}  Let $(\cal T, \cal F)$ be a pretorsion theory in $\mathcal C$. There is a natural transformation $\zeta\colon E_{\Cal F}E_{\Cal T}\to E_{\Cal T}E_{\Cal F}$. Moreover $E_{\Cal F}E_{\Cal T}(C)$ and $E_{\Cal T}E_{\Cal F}(C)$ are objects of $\Cal Z$ for every $C$ in $\Cal C$. \end{proposition}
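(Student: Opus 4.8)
The plan is to construct $\zeta$ componentwise and then verify that both composite functors land in $\Cal Z$. First I would unwind the definitions: for an object $C$ of $\Cal C$, $E_{\Cal T}E_{\Cal F}(C) = e_{\Cal T}t\bigl(e_{\Cal F}f(C)\bigr)$, i.e.\ the torsion part of the torsion-free quotient $f(C)$, while $E_{\Cal F}E_{\Cal T}(C) = e_{\Cal F}f\bigl(e_{\Cal T}t(C)\bigr)$, i.e.\ the torsion-free quotient of the torsion part $t(C)$. Since $t(C)\in\Cal T$, Proposition~\ref{importante!}(a) immediately gives that $f(t(C))\in\Cal Z$, so $E_{\Cal F}E_{\Cal T}(C)\in\Cal Z$; dually (working in $\Cal C^{\op}$, where $(\Cal F,\Cal T)$ is a pretorsion theory), $t(f(C))\in\Cal Z$, so $E_{\Cal T}E_{\Cal F}(C)\in\Cal Z$. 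That settles the ``moreover'' part, which I expect to be the easy half.

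For the natural transformation $\zeta$ I would proceed as follows. Fix $C$. The object $E_{\Cal F}E_{\Cal T}(C) = f(t(C))$ lies in $\Cal Z\subseteq\Cal F$, so it is fixed by $f$; the object $E_{\Cal T}E_{\Cal F}(C) = t(f(C))$ lies in $\Cal Z\subseteq\Cal T$, so it is fixed by $t$. I want a morphism $f(t(C))\to t(f(C))$. Consider the counit component $\varepsilon_C\colon t(C)\to C$ and the unit component $\eta_C\colon C\to f(C)$; composing gives $\eta_C\varepsilon_C\colon t(C)\to f(C)$. Apply $f$ to $\eta_C\varepsilon_C$ and use $f(f(C))=f(C)$ to get $f(\eta_C\varepsilon_C)\colon f(t(C))\to f(C)$; but since $t(C)\in\Cal T$, the morphism $\eta_{t(C)}\colon t(C)\to f(t(C))$ is $\Cal Z$-trivial and $f(t(C))\in\Cal Z$ by Proposition~\ref{importante!}(a), and a diagram chase through the universal property of $\eta_{t(C)}$ shows the induced map $f(t(C))\to f(C)$ in fact factors through $t(f(C))$, because its domain is a $\Cal Z$-object and $t(f(C))$ is the $\Cal Z$-prekernel-type approximation of $f(C)$ from inside $\Cal T$. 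Concretely: $f(t(C))\in\Cal Z\subseteq\Cal T$, so by Proposition~\ref{caratterizzazione} (or directly by the universal property of the counit $\varepsilon_{f(C)}\colon t(f(C))\to f(C)$ as a right adjoint unit, noting $\varepsilon_{f(C)}$ is a monomorphism) any morphism from $f(t(C))$ into $f(C)$ factors uniquely through $t(f(C))$. This factorization is the component $\zeta_C\colon E_{\Cal F}E_{\Cal T}(C)\to E_{\Cal T}E_{\Cal F}(C)$.

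Naturality of $\zeta$ should then follow formally: given $\varphi\colon C\to C'$, all the maps involved ($\varepsilon$, $\eta$, $f(\varphi)$, $t(\varphi)$) are components of natural transformations or values of functors, and the factorizations defining $\zeta_C$, $\zeta_{C'}$ are unique (because $\varepsilon_{f(C')}$ is a monomorphism, so the lift through $t(f(C'))$ is unique). Uniqueness of the lift is exactly what forces the naturality square $\zeta_{C'}\circ E_{\Cal F}E_{\Cal T}(\varphi) = E_{\Cal T}E_{\Cal F}(\varphi)\circ\zeta_C$ to commute: both sides are lifts of the same morphism $f(t(C))\to f(C')$ through the monomorphism $\varepsilon_{f(C')}$.

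The main obstacle I anticipate is pinning down the correct morphism $\eta_C\varepsilon_C$ (versus some other natural candidate) so that the two required factorizations actually exist and are compatible — in particular, making sure that the map out of $f(t(C))$ genuinely factors through $t(f(C))$ and not merely through $f(C)$. The key leverage is that $f(t(C))\in\Cal Z$, so it is simultaneously torsion and torsion-free, which is precisely what allows a morphism out of it to be lifted along the monic counit $\varepsilon_{f(C)}\colon t(f(C))\to f(C)$; once that is set up correctly, everything else is a routine diagram chase using the uniqueness clauses in the universal properties of $\varepsilon$ and $\eta$.
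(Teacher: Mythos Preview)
Your proposal is correct, and the overall architecture is the same as the paper's: first use Proposition~\ref{importante!}(a) and its dual to see $f(t(C)),\,t(f(C))\in\Cal Z$, then build $\zeta_C$ by a universal-property factorization and deduce naturality from uniqueness of that factorization.

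The one genuine difference is \emph{which} universal property you invoke, and it is exactly the dual of the paper's choice. The paper starts from $t(\eta_C)\colon t(C)\to t(f(C))$, observes it is $\Cal Z$-trivial (codomain in $\Cal Z$), and factors it through the $\Cal Z$-precokernel $\eta_{t(C)}\colon t(C)\to f(t(C))$; naturality then follows because $\eta_{t(C)}$ is an epimorphism. You instead start from $f(\varepsilon_C)\colon f(t(C))\to f(C)$ (your detour through $\eta_C\varepsilon_C$ is harmless since $f(\eta_C)=1_{f(C)}$), observe its domain lies in $\Cal T$, and lift it through the coreflection counit $\varepsilon_{f(C)}\colon t(f(C))\to f(C)$; naturality follows because $\varepsilon_{f(C')}$ is a monomorphism. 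A quick check (using naturality of $\varepsilon$ and $\eta$) shows both constructions satisfy $\varepsilon_{f(C)}\,\zeta_C\,\eta_{t(C)}=\eta_C\varepsilon_C$, so they produce the same $\zeta_C$. Neither approach buys more than the other here; they are mirror images, and your version is just as clean once you drop the unnecessary $\eta_C$ from the starting composite.
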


\begin{proof} From Proposition~\ref{importante!}(a), we know that $f(t(C))\in\Cal Z$, and from its dual we have that $t(f(C))\in\Cal Z$.
	We must show that for every object $C$ of $\Cal C$ there is a natural morphism $\zeta_C\colon f(t(C))\to t(f(C))$. Applying the functor $t$ to the morphism $\eta_C\colon C\to f(C)$, we get a morphism $t(\eta_C)\colon t(C)\to t(f(C))$. This is a $\Cal Z$-trivial morphism, because $t(f(C))\in\Cal Z$ (by the dual version of Proposition \ref{importante!}(a)). Moreover, the morphism $\eta_{t(C)}\colon t(C) \to f(t(C))$ is the $\Cal Z$-precokernel of the identity $t(C)\to t(C)$. Hence there is a unique morphism $\zeta_C\colon f(t(C))\to t(f(C))$ such that $t(\eta_C)=\zeta_C\circ \eta_{t(C)}$. 
	
	To prove the naturality of $\zeta$, let $\varphi\colon C\to C'$ be a morphism in $\Cal C$, and consider the diagram
	\begin{equation} \xymatrix{
		ft(C) \ar[ddd]_{ft(\varphi)} \ar[rr]^{\zeta_C}& &tf(C)\ar[ddd]^{tf(\varphi)}\\
		& t(C)\ar[ul]^{\eta_{t(C)}}\ar[ur]_{t(\eta_C)}\ar[d]^{t(\varphi)} &\\
		& t(C')\ar[dl]_{\eta_{t(C')}}\ar[dr]^{t(\eta_{C'})}&\\
		ft(C')\ar[rr]_{\zeta_{C'}}&&tf(C'),}\label{44}\end{equation} where:
	
	(a) The trapezoids on the left and on the right commute by the naturality of $\eta$ and the functoriality of $t$.
	
	(b) The upper triangle and the lower triangle commute by the definition of $\zeta$.
	
	(c) $\eta_{t(C)}$ is an epimorphism.
	
	\noindent It follows that the outer rectangle in diagram (\ref{44}) commutes. This proves that the transformation $\zeta$ is natural.\end{proof}

\begin{example}{\rm 	Here is an example of a pretorsion theory for which the natural transformation $\zeta$ of Proposition~\ref{noncommutano} is not an isomorphism.
		Consider the non-modular lattice $N_5$: 
		%	\begin{equation*} \xymatrix{ & F\ar@{-}[rd]\ar@{-}[ldd] & \\ 
		%	&& {Z'}\ar@{-}[dd] \\
		%	C\ar@{-}[rdd] & &\\
		%	&& {Z}\ar@{-}[ld] \\
		%		&T&}\end{equation*} 

		\begin{center}\begin{tikzpicture}[scale=.7]
			\node (one) at (0,2) {$F$};
			\node (a) at (-2,-0.25) {$C$};
			\node (b) at (2,0.5) {$Z'$};
			\node (c) at (2,-1) {$Z$};
			\node (d) at (0,-2.5) {$T$};
			\draw (a) -- (one) -- (b) -- (c) -- (d) -- (a);
			\end{tikzpicture}\end{center}
		
		The partially ordered set $N_5$ { is} a category with five objects, for which for any pair of objects $X,Y$ there is at most one morphism $X\to Y$, and such a morphism exists if and only if $X\le Y$. We will denote this unique morphism by $X\le Y$. In this category, $T$ is the initial object, $F$ is the terminal object, and two objects are isomorphic if and only if they are equal. Consider the pretorsion theory $(\Cal T,\Cal F)$ for which $\Cal T=\{T,Z,Z'\}$ and $\Cal F=\{F,Z,Z'\}$, so $\Cal Z=\{Z,Z'\}$. It is easy to check that this is a pretorsion theory. For instance, the short $\Cal Z$ preexact sequences for the five objects of the category are $T\le T\le Z$, $Z\le Z\le Z$, $Z'\le Z'\le Z'$, $T\le C\le F$ and $Z'\le F\le F$. Hence $ft(C)=Z$ and $tf(C)=Z'$, so that $ft(C)$ and $tf(C)$ are not isomorphic.}\end{example}

We already know from Proposition~\ref{caratterizzazione} that $\Cal T=\{\,C\in\Cal C\mid C\cong t(C)\,\}$, i.e., $\Cal T=\{\,C\in\Cal C\mid C\cong E_{\Cal T}(C)\,\}$. By duality, $\Cal F=\{\,C\in\Cal C\mid C\cong E_{\Cal F}(C)\,\}$. Also, $\Cal Z=\{\,C\in\Cal C\mid C\cong t(C)\cong f(C)\,\}$. Now we have that:

\begin{proposition} Let $(\cal T, \cal F)$ be a pretorsion theory in $\mathcal C$. Then
	
	$$\Cal Z=\{\,C\in\Cal C\mid C\cong E_{\Cal T}E_{\Cal F}(C)\,\}.$$ \end{proposition}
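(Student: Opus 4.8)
The plan is to prove the double inclusion. One direction is essentially immediate: if $C\in\Cal Z$, then $C\in\Cal F$, so $C\cong f(C)=E_{\Cal F}(C)$ (using that $\Cal F=\{\,C\mid C\cong E_{\Cal F}(C)\,\}$, already recorded above), and since $C\in\Cal Z\subseteq\Cal T$ we also have $C\cong t(C)$. Combining these, $E_{\Cal T}E_{\Cal F}(C)\cong E_{\Cal T}(C)\cong C$, where I use that $E_{\Cal F}(C)\cong C$ lets me replace $E_{\Cal F}(C)$ by $C$ up to isomorphism inside $E_{\Cal T}$ (the functors respect isomorphism). So $C\cong E_{\Cal T}E_{\Cal F}(C)$.

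For the reverse inclusion, suppose $C\cong E_{\Cal T}E_{\Cal F}(C)=t(f(C))$. First I would observe that $t(f(C))\in\Cal Z$: this is exactly the content of the dual of Proposition~\ref{importante!}(a) (applied to $f(C)\in\Cal F$), which is invoked in the proof of Proposition~\ref{noncommutano}. Hence $C$ is isomorphic to an object of $\Cal Z$, and since $\Cal Z$ is replete (it is $\Cal T\cap\Cal F$ of replete subcategories, hence replete), $C\in\Cal Z$. That completes the argument.

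The only genuine content is thus the identity $E_{\Cal T}E_{\Cal F}(C)=t(f(C))\in\Cal Z$ for \emph{every} $C$, and I expect this to be the step requiring the most care — but it is not new: it is precisely half of the statement "$E_{\Cal F}E_{\Cal T}(C)$ and $E_{\Cal T}E_{\Cal F}(C)$ are objects of $\Cal Z$" from Proposition~\ref{noncommutano}, which I may assume. So in fact no obstacle remains: the proof is a two-line assembly. I would phrase it as: if $C\in\Cal Z$ then $C\cong t(C)\cong f(C)$, so applying $t$ to the isomorphism $C\cong f(C)$ gives $t(C)\cong tf(C)$, whence $E_{\Cal T}E_{\Cal F}(C)=tf(C)\cong t(C)\cong C$; conversely, $tf(C)\in\Cal Z$ always by (the dual of) Proposition~\ref{importante!}(a), so $C\cong tf(C)$ forces $C\in\Cal Z$ by repleteness of $\Cal Z$.
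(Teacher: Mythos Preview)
Your proposal is correct and follows essentially the same approach as the paper: one inclusion uses that $E_{\Cal T}E_{\Cal F}(C)\in\Cal Z$ for every $C$ (Proposition~\ref{noncommutano}, via the dual of Proposition~\ref{importante!}(a)) together with repleteness of $\Cal Z$, and the other inclusion uses that $f(Z)\cong Z$ and $t(Z)\cong Z$ for $Z\in\Cal Z$. The only cosmetic difference is that the paper exploits the convention (choice~(1) before Proposition~\ref{adjoint}) that the chosen preexact sequence for $Z\in\Cal Z$ is $Z\xrightarrow{1_Z}Z\xrightarrow{1_Z}Z$, obtaining literal equalities $f(Z)=Z$ and $t(Z)=Z$ rather than isomorphisms.
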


\begin{proof} If $C\cong E_{\Cal T}E_{\Cal F}(C)$, we have $C\in\Cal Z$ by Proposition~\ref{noncommutano}. Conversely, if $Z\in\Cal Z$, the short $\cal Z$-preexact sequence relative to $Z$ is the sequence
	$$\xymatrix{
		Z\ar[r]^{1_Z} &  Z \ar[r]^{1_Z} &  Z.}$$
	Hence $f(Z)=Z$, so {$E_{\Cal F}(Z)=Z$}, and therefore $Z=t(Z)=E_{\Cal T}(Z)=E_{\Cal T}E_{\Cal F}(Z)$.\end{proof}

\section{Closure properties}

\begin{definition}
{\rm Let $\cal C$ be a category and $\cal Z$ a non-empty full subcategory of $\cal C$. We say that a full replete subcategory $\cal S$ of $\cal C$ is \emph{closed under $\cal Z$-extensions} if, for every short $\cal Z$-exact sequence 
$
S_1\to X\to S_2
$
in $\cal C$, where $S_1,S_2\in\cal S$, then $X\in\cal S$. }
\end{definition}

\begin{proposition}\label{ext}
Let $(\cal T,\cal F)$ be a pretorsion theory in a category $\cal C$. Then the three subcategories $\cal T$, $\cal F$ and $\Cal Z$ are all closed under $\cal Z$-extensions.
\end{proposition}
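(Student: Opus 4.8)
The plan is to establish each of the three closure properties separately, using the characterizations of $\Cal T$, $\Cal F$ and $\Cal Z$ already available in the excerpt. For $\Cal F$ this is essentially automatic: since $\Cal F$ is an epireflective subcategory of $\Cal C$ (Corollary~\ref{reflectiveness}), it is closed under limits, and in particular under the relevant constructions — but one must actually argue directly from the short $\Cal Z$-preexact sequence. So let $S_1\to X\to S_2$ be a short $\Cal Z$-preexact sequence with $S_1,S_2\in\Cal F$. Here the second map $g\colon X\to S_2$ is a $\Cal Z$-precokernel, hence an epimorphism (Proposition~\ref{precokernel-properties'}(a)). To show $X\in\Cal F$ I would use Proposition~\ref{ortogonal}(b): take any $T\in\Cal T$ and any morphism $h\colon T\to X$; compose with $g$ to get $gh\colon T\to S_2$, which is $\Cal Z$-trivial because $S_2\in\Cal F$; then $h$ factors through the $\Cal Z$-prekernel $S_1\to X$ of $g$ — wait, that is not immediate since $gh$ being $\Cal Z$-trivial only gives a factorization of $h$ through the prekernel, i.e. $h = (S_1\to X)\circ h'$ for some $h'\colon T\to S_1$; but $h'$ is $\Cal Z$-trivial since $S_1\in\Cal F$, and a $\Cal Z$-trivial morphism composed with anything is $\Cal Z$-trivial, so $h$ is $\Cal Z$-trivial. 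Hence $\Hom_{\Cal C}(T,X)=\Triv_{\Cal Z}(T,X)$ for all $T\in\Cal T$, and Proposition~\ref{ortogonal}(b) gives $X\in\Cal F$. The case of $\Cal T$ is the exact dual, applying Proposition~\ref{ortogonal}(a) and working with the $\Cal Z$-precokernel.

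For $\Cal Z$, I would simply combine the two results just proved: if $S_1\to X\to S_2$ is a short $\Cal Z$-preexact sequence with $S_1,S_2\in\Cal Z=\Cal T\cap\Cal F$, then in particular $S_1,S_2\in\Cal F$, so $X\in\Cal F$ by the $\Cal F$-case; and $S_1,S_2\in\Cal T$, so $X\in\Cal T$ by the $\Cal T$-case; therefore $X\in\Cal T\cap\Cal F=\Cal Z$. No separate argument is needed.

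I expect the main subtlety — not really an obstacle, but the step requiring care — to be the factorization argument in the $\Cal F$-case: one must remember that the defining property of a $\Cal Z$-prekernel gives a factorization of $h$ through $S_1\to X$ from the mere $\Cal Z$-triviality of $g\circ h$, and then observe that this auxiliary morphism lands in $S_1\in\Cal F$ and hence is itself $\Cal Z$-trivial (using that $S_1\in\Cal T$ is not needed here — only $S_1\in\Cal F$, since we are testing against $T\in\Cal T$; actually we need $\Hom_{\Cal C}(T,S_1)=\Triv_{\Cal Z}(T,S_1)$, which is Axiom (1) of a pretorsion theory applied to $T\in\Cal T$, $S_1\in\Cal F$). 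Once this bookkeeping is clear, the rest is a routine application of the orthogonality characterization in Proposition~\ref{ortogonal} and its dual, together with the fact that $\Cal Z$-trivial morphisms form an ideal.
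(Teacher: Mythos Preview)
Your argument is correct, and the factorization step through the $\Cal Z$-prekernel (resp.\ $\Cal Z$-precokernel) using Axiom~(1) is exactly the same idea the paper uses. The difference lies only in how you finish: you test $X$ against \emph{every} $T\in\Cal T$ (resp.\ $F\in\Cal F$) and then invoke the orthogonality characterization of Proposition~\ref{ortogonal}, whereas the paper tests $X$ against the \emph{single} object $f(X)$ by showing that the reflection unit $\eta_X\colon X\to f(X)$ itself factors through $T_2\in\Cal T$, hence is $\Cal Z$-trivial, and then applies Lemma~\ref{xxx}(b) to conclude that $\varepsilon_X$ is an isomorphism. Your route avoids any appeal to the canonical short $\Cal Z$-preexact sequence and to Lemma~\ref{xxx}, at the cost of quantifying over all torsion (resp.\ torsion-free) objects; the paper's route is slightly more economical once one has already set up the functors $t$ and $f$. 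Your treatment of $\Cal Z$ (as the intersection) is identical to the paper's. The aside about epireflectivity and closure under limits at the start of your $\Cal F$-case is not needed and you rightly abandon it.
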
  

\begin{proof}
Fix any object $X$ of $\cal C$ and a short $\cal Z$-preexact sequence $$\xymatrix{
	t(X) \ar[r]^{k} &  X \ar[r]^{p} &  f(X),}$$
where $t(X)\in\cal T$ and $f(X)\in \cal F$. Let  $\xymatrix{
	T_1 \ar[r]^f &  X \ar[r]^g &  T_2}$ be a short $\cal Z$-preexact sequence in $\cal C$, where $T_1$ and $T_2$ belong to $\cal T$. Since $pf$ is $\cal Z$-trivial and $g$ is a $\cal Z$-precokernel of $f$, then there exists a unique morphism $\gamma
	\colon T_2\to f(X)$ such that $p=\gamma g$. By definition, $\gamma$ is $\cal Z$-trivial and thus $p$ is $\cal Z$-trivial too, a fortiori.  Lemma \ref{xxx}(b) implies that $k$ is an isomorphism. Since $\cal T$ is replete, it follows that $X\in\cal T$. Dually, $\cal F$ is closed under $\cal Z$-extensions. The assertion for $\Cal Z$  follows immediately from the fact that $\cal Z = \cal F \cap \cal T$.
\end{proof}

\begin{proposition}
Let $\cal C$ be a category, $(\cal T,\cal F)$ be a pretorsion theory in a category $\cal C$, and $\Cal Z=\cal T\cap \cal F$. 
\begin{enumerate}
	\item[{\rm (a)}]  Assume that $\cal Z$ is closed { under coproducts in $\cal C$. If $\cal X = \{X_j\}_{j\in J}$ is a family of objects of $\cal T$ and there exists a coproduct $X$ of $\cal X$ in $\cal C$, then $ X\in \cal T$. }
	\item[{\rm (b)}]  Assume that $\cal Z$ is closed { under products in $\cal C$. If $\cal Y = \{Y_j \}_{j\in J}$ is a family of objects of $\cal F$ and there exists a product $Y$ of $\cal Y$ in $\cal C$, then $Y \in \cal F$. }
\end{enumerate}
\end{proposition}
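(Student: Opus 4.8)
The plan is to prove (a) directly, by showing that the torsion component $\varepsilon_X\colon t(X)\to X$ of a chosen short $\mathcal Z$-preexact sequence $t(X)\xrightarrow{\varepsilon_X}X\xrightarrow{\eta_X}f(X)$ (with $t(X)\in\mathcal T$ and $f(X)\in\mathcal F$) is an isomorphism; then $X\cong t(X)\in\mathcal T$, and since $\mathcal T$ is replete we get $X\in\mathcal T$. By Proposition~\ref{prekernel-properties'}(a) we already know that $\varepsilon_X$ is a monomorphism, so it will suffice to exhibit a morphism $w\colon X\to t(X)$ with $\varepsilon_X w=1_X$: a monomorphism which is also a split epimorphism is an isomorphism, since from $\varepsilon_X w\varepsilon_X=\varepsilon_X=\varepsilon_X 1_{t(X)}$ and $\varepsilon_X$ monic one deduces $w\varepsilon_X=1_{t(X)}$.

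To construct $w$, let $u_j\colon X_j\to X$ denote the coproduct injections, $j\in J$. For each $j$ the composite $\eta_X u_j\colon X_j\to f(X)$ is a morphism from the object $X_j\in\mathcal T$ to the object $f(X)\in\mathcal F$, hence it is $\mathcal Z$-trivial by Axiom~(1) of Definition~\ref{pretorsion-theory-def}. Since $\varepsilon_X$ is a $\mathcal Z$-prekernel of $\eta_X$, its universal property yields a unique morphism $w_j\colon X_j\to t(X)$ with $\varepsilon_X w_j=u_j$. The universal property of the coproduct $X=\coprod_{j\in J}X_j$ then produces a (unique) morphism $w\colon X\to t(X)$ with $wu_j=w_j$ for all $j$; and since $\varepsilon_X wu_j=\varepsilon_X w_j=u_j=1_X u_j$ for every $j$, the uniqueness part of the coproduct's universal property forces $\varepsilon_X w=1_X$, as required. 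Part (b) follows by duality: in $\mathcal C^{\op}$ the pair $(\mathcal F,\mathcal T)$ is a pretorsion theory, a product in $\mathcal C$ is a coproduct in $\mathcal C^{\op}$, and $\mathcal Z$ is closed under coproducts in $\mathcal C^{\op}$ exactly when it is closed under products in $\mathcal C$, so applying (a) in $\mathcal C^{\op}$ gives (b).

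No single step here is genuinely hard; the point is to pick the right route. A more pedestrian attempt would use Proposition~\ref{ortogonal}(a): given $F\in\mathcal F$ and $h\colon X\to F$, each restriction $hu_j$ factors through some $Z_j\in\mathcal Z$ (for instance through $f(X_j)\in\mathcal Z$, by Proposition~\ref{importante!}(a)), and one would like to glue these factorizations into a factorization of $h$ through $\coprod_{j}Z_j$ — which is what the hypothesis ``$\mathcal Z$ closed under coproducts'' is meant to handle. The obstacle with that route is that the coproduct $\coprod_j Z_j$ need not exist in $\mathcal C$ (only $\coprod_j X_j$ is assumed to exist), so the gluing is not automatic. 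The argument sketched above sidesteps this entirely by working with the coreflection $t\colon\mathcal C\to\mathcal T$ and its counit, which is monic because $\mathcal T$ is monocoreflective in $\mathcal C$ (Corollary~\ref{reflectiveness}); in fact it does not need the hypothesis on $\mathcal Z$ at all, and merely records the general fact that a monocoreflective subcategory is closed under every coproduct of its objects that exists in the ambient category.
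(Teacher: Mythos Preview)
Your argument is correct, and it is genuinely different from the paper's. The paper proves (a) via Proposition~\ref{ortogonal}(a): given $\alpha\colon X\to F$ with $F\in\Cal F$, each $\alpha i_j$ factors through some $Z_j\in\Cal Z$, and the hypothesis on $\Cal Z$ is invoked to form the coproduct $Z=\coprod_j Z_j\in\Cal Z$, through which $\alpha$ then factors. Your route instead exploits the $\Cal Z$-prekernel property of $\varepsilon_X$ to lift each coproduct injection $u_j$ to $t(X)$ and assemble a splitting of the monomorphism $\varepsilon_X$; this is precisely the standard argument that a coreflective subcategory with monic counit absorbs any ambient coproduct of its objects, and it renders the hypothesis ``$\Cal Z$ closed under coproducts'' superfluous. (The paper in fact records this stronger conclusion just after Corollary~\ref{reflectiveness}, so your proof aligns with what the paper already knows, while the proposition as stated carries an unnecessary assumption.) Your remark about the ``pedestrian'' route is fair under the reading of ``closed under coproducts'' as ``existing $\Cal C$-coproducts of $\Cal Z$-objects lie in $\Cal Z$''; the paper evidently intends the stronger reading that such coproducts \emph{exist} in $\Cal C$ and lie in $\Cal Z$, since its proof asserts the existence of $\coprod_j Z_j$ outright. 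Either way, your argument avoids the ambiguity and yields the sharper result.
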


\begin{proof}
(a)  Let $\{X_j \}_{j\in J}$ be a family of objects in $\mathcal T$, and write $i_j\colon X_j\to  X$ for the canonical morphism to the (object part of their) coproduct, for any $j \in J$. Let $\alpha\colon  X\to F$ be any morphism with $F \in \cal F$, and it will suffice to show that $\alpha$ is $\cal Z$-trivial (by Proposition \ref{ortogonal}).
Since $\cal X\subseteq \cal T$, the morphism $\alpha i_j\colon X_j\to F$ is $\cal Z$-trivial, for every $j\in J$, that is, there exists an object $Z_j$ of $\cal Z$ and morphisms $\varphi_j\colon X_j\to Z_j$, $f_j\colon Z_j\to F$ such that $\alpha i_j=f_j\varphi_j$. By assumption, there exists a coproduct $Z$ of the family $\{Z_j\}_{j\in J }$ of objects of $\cal Z$, and it belongs to $\cal Z$. 
If $l_j\colon Z_j\to Z$ is the canonical morphism, there is a unique morphism $\varphi\colon  X\to Z$ such that $\varphi i_j=l_j\varphi_j$. The unique morphism $f
\colon Z\to F$ such that $f_j=fl_j$ (for every $j\in J$) clearly satisfies $f\varphi=\alpha$, and this shows that $\alpha$ is $\cal Z$-trivial.
%$$
%f\varphi i_j=f l_j\varphi_j=f_j\varphi_j=\alpha i_j,
%$$
%proving that  and $\alpha$ is $\cal Z$-trivial. 

Statement (b) clearly holds, by duality.
\end{proof}

Let us consider full replete subcategories $\cal Z\subseteq \cal F$ of a category $\cal C$. We say that $\cal F$ is \emph{$\cal Z$-normal epireflective in $\cal C$} if the inclusion functor $e_{\cal F} \colon \cal F \rightarrow \cal C$ has a left adjoint $f \colon \cal C \rightarrow \cal F$ with the property that each component $\eta_A  \colon A \rightarrow f(A)$ of the unit of the adjunction is a $\cal Z$-precokernel. 

\begin{lemma}\label{coker-of-kernel}
Let $\cal C$ be a category and let $\cal Z$ be a full subcategory of $\cal C$. If $p$ is a $\cal Z$-precokernel of some morphism, then $p$ is also the $\cal Z$-precokernel of its $\cal Z$-prekernel $e:A\to B$ whenever the latter exists. In this case, $\xymatrix{
	A \ar[r]^e &  B \ar[r]^p &  C}$ is a short $\cal Z$-preexact sequence in $\cal C$. 
\end{lemma}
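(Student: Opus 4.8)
The statement to prove is Lemma~\ref{coker-of-kernel}: if $p\colon B\to C$ is a $\cal Z$-precokernel of \emph{some} morphism, and if the $\cal Z$-prekernel $e\colon A\to B$ of $p$ exists, then $p$ is the $\cal Z$-precokernel of $e$, so that $\xymatrix{A \ar[r]^e & B \ar[r]^p & C}$ is a short $\cal Z$-preexact sequence.

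First I would unpack the hypothesis: there is a morphism $u\colon D\to B$ with $p$ a $\cal Z$-precokernel of $u$. Since $e$ is a $\cal Z$-prekernel of $p$, the composite $pe$ is $\cal Z$-trivial, so condition~(1) in the definition of $\cal Z$-precokernel holds for the pair $(e,p)$. It remains to verify the universal property~(2): given $\mu\colon B\to Y$ with $\mu e$ $\cal Z$-trivial, there should be a unique $\mu'\colon C\to Y$ with $\mu=\mu'p$. The natural strategy is to show that $\mu u$ is $\cal Z$-trivial, and then invoke the universal property of $p$ as the $\cal Z$-precokernel of $u$; uniqueness of $\mu'$ is then automatic since $p$ is an epimorphism by Proposition~\ref{precokernel-properties'}(a). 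So the crux is: \emph{from $\mu e$ $\cal Z$-trivial, deduce $\mu u$ $\cal Z$-trivial}.

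For that I would use the defining property of $e$ as a $\cal Z$-prekernel of $p$ applied to the morphism $u\colon D\to B$: since $pu$ is $\cal Z$-trivial (as $p$ is a $\cal Z$-precokernel of $u$, condition~(1)), there is a (unique) $u'\colon D\to A$ with $u=eu'$. Then $\mu u=\mu e u'$, and since $\mu e$ is $\cal Z$-trivial, so is $\mu u=(\mu e)u'$, because the class of $\cal Z$-trivial morphisms is an ideal (precomposition with any morphism preserves $\cal Z$-triviality). This is the step I expect to carry the real content; everything else is bookkeeping. Now $\mu u$ being $\cal Z$-trivial gives, by the universal property of the $\cal Z$-precokernel $p$ of $u$, a unique $\mu'\colon C\to Y$ with $\mu'p=\mu$, which is exactly condition~(2) for $(e,p)$. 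Hence $p$ is a $\cal Z$-precokernel of $e$.

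Finally, to conclude that $\xymatrix{A \ar[r]^e & B \ar[r]^p & C}$ is short $\cal Z$-preexact, I just observe that by hypothesis $e$ is a $\cal Z$-prekernel of $p$ and we have just shown $p$ is a $\cal Z$-precokernel of $e$, which is precisely the definition of a short $\cal Z$-preexact sequence. The main (and only genuine) obstacle is spotting that the prekernel property of $e$ lets one factor $u$ through $e$; once that factorization $u=eu'$ is in hand, the ideal property of $\cal Z$-trivial morphisms does the rest essentially for free.
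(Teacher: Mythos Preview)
Your proof is correct and follows essentially the same approach as the paper's: factor the original morphism $u$ through the $\cal Z$-prekernel $e$ via $u=eu'$, then use the ideal property of $\cal Z$-trivial morphisms to conclude that $\mu u=(\mu e)u'$ is $\cal Z$-trivial, and finally invoke the universal property of $p$ as $\cal Z$-precokernel of $u$. The paper's argument is identical up to notation (it writes $q,\alpha,\lambda,\lambda_1$ where you write $u,u',\mu,\mu'$).
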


\begin{proof}
Let  $q\colon X\to B$ be a morphism such that $p$ is a $\cal Z$-precokernel of $q$. Since $pq$ is $\cal Z$-trivial and $e \colon A \rightarrow B$ is a $\cal Z$-prekernel of $p$, there is a unique morphism $\alpha\colon X\to A$ such that $q=e\alpha$. Now let $\lambda\colon B\to Y$ be any morphism with $\lambda e$ $\cal Z$-trivial. A fortiori, $\lambda q=\lambda e\alpha$ is $\cal Z$-trivial. Since $p$ is a $\cal Z$-precokernel of $q$, there exists a unique morphism $\lambda_1\colon C\to Y$ satisfying $\lambda=\lambda_1p$, as desired.  
\end{proof}

 \begin{proposition}\label{idempotent-subfunctor}
Let $\cal C$ be a category and let $\cal Z$ be a non-empty full subcategory of $\cal C$ closed under retracts. The following conditions on a full replete subcategory $\cal F$ of $\cal C$ are equivalent:
\begin{enumerate}
\item[{\rm (a)}]  $\cal F$ is the torsion-free subcategory of a pretorsion theory $(\cal T, \cal F)$ in $\cal C$ such that  $\cal Z = \cal T \cap \cal F$.
\item[{\rm (b)}]  $\cal F$ is $\cal Z$-normal epireflective in $\cal C$; for any $A\in \cal C$, the $A$-component $\eta_A \colon A \rightarrow f(A)$  of the unit has a $\cal Z$-prekernel $\varepsilon_A \colon t(A) \rightarrow A$; and $\varepsilon_{t(A)} \colon t(t(A)) \rightarrow t(A)$ is an isomorphism for every $A\in \cal C$.
\end{enumerate}
\end{proposition}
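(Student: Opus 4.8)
The plan is to establish the equivalence by proving the two implications separately, using the functors $t\colon\cal C\to\cal T$ and $f\colon\cal C\to\cal F$ together with the adjunctions developed in Section~3.

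\emph{(a) $\Rightarrow$ (b).} Assuming that $(\cal T,\cal F)$ is a pretorsion theory with $\cal Z=\cal T\cap\cal F$, Proposition~\ref{adjoint}(a) already gives that $f$ is a left adjoint of $e_{\cal F}$, and the $A$-component $\eta_A\colon A\to f(A)$ of the unit is, by construction, the right-hand morphism of the short $\cal Z$-preexact sequence $t(A)\xrightarrow{\varepsilon_A}A\xrightarrow{\eta_A}f(A)$; hence $\eta_A$ is a $\cal Z$-precokernel (so $\cal F$ is $\cal Z$-normal epireflective) and $\varepsilon_A\colon t(A)\to A$ is a $\cal Z$-prekernel of $\eta_A$. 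For the last clause I would put $T:=t(A)$, note that $T\in\cal T$ by Proposition~\ref{caratterizzazione}, so that $\eta_T\colon T\to f(T)$ is a morphism from $\cal T$ to $\cal F$, hence $\cal Z$-trivial by Axiom~(1); since $\varepsilon_T$ is a $\cal Z$-prekernel of $\eta_T$, Lemma~\ref{xxx}(b) forces $\varepsilon_{t(A)}=\varepsilon_T$ to be an isomorphism.

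\emph{(b) $\Rightarrow$ (a).} Here $f$ is the given left adjoint with each $\eta_A$ a $\cal Z$-precokernel, and for each $A$ one fixes a $\cal Z$-prekernel $\varepsilon_A\colon t(A)\to A$ of $\eta_A$. I would define $\cal T:=\{\,T\in\cal C\mid f(T)\in\cal Z\,\}$. First note that $\cal Z$ is replete (an isomorphism exhibits its target as a retract of its source, so closure under retracts gives closure under isomorphism), hence $\cal T$ is a full replete subcategory. Then $\cal Z=\cal T\cap\cal F$: for $Z\in\cal Z\subseteq\cal F$ the unit $\eta_Z$ is an isomorphism by reflectiveness, so $f(Z)\cong Z\in\cal Z$ and $Z\in\cal T$; conversely $C\in\cal T\cap\cal F$ gives $C\cong f(C)\in\cal Z$. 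Axiom~(1) is then immediate, since for $T\in\cal T$, $F\in\cal F$ any $g\colon T\to F$ factors through $f(T)\in\cal Z$ via the unit, and hence is $\cal Z$-trivial. For Axiom~(2), given $B$ I would take $t(B)\xrightarrow{\varepsilon_B}B\xrightarrow{\eta_B}f(B)$: since $\eta_B$ is a $\cal Z$-precokernel and $\varepsilon_B$ its $\cal Z$-prekernel, Lemma~\ref{coker-of-kernel} makes this a short $\cal Z$-preexact sequence, and $f(B)\in\cal F$ by definition.

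The remaining point, which I expect to be the main obstacle, is that $t(B)\in\cal T$, i.e.\ $f(t(B))\in\cal Z$: this is precisely where the third hypothesis of (b) is consumed. Applying it with $A=B$, the morphism $\varepsilon_{t(B)}\colon t(t(B))\to t(B)$ is an isomorphism and is a $\cal Z$-prekernel of $\eta_{t(B)}$, so $\eta_{t(B)}$ is $\cal Z$-trivial by Lemma~\ref{xxx}(b); then Lemma~\ref{utilissimo} — valid since $\cal Z\subseteq\cal F$, $\cal Z$ is closed under retracts, and $\cal F$ is reflective — yields $f(t(B))\in\cal Z$. Thus $(\cal T,\cal F)$ satisfies both axioms and is a pretorsion theory with torsion-free part $\cal F$, completing (b) $\Rightarrow$ (a). The one subtlety to keep in mind throughout is that $f$ is canonical only up to natural isomorphism and each $t(A)$ only up to isomorphism, so every verification must be phrased so as to be invariant under these choices; the repleteness of $\cal Z$, of $\cal F$ and of the constructed $\cal T$ makes this harmless. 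Note also that the crucial step $f(t(B))\in\cal Z$ is exactly the categorical counterpart of the classical identity $t\bigl(C/t(C)\bigr)=0$.
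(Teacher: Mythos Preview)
Your proof is correct and follows essentially the same architecture as the paper's. The only noteworthy variation is in (b)$\Rightarrow$(a): you define $\cal T$ as $\{T\mid f(T)\in\cal Z\}$, whereas the paper defines it as $\{T\mid \eta_T\text{ is }\cal Z\text{-trivial}\}$. These two descriptions are equivalent (this is exactly Proposition~3.9 in the paper), and the trade-off is minor: your choice makes the verification of $\cal Z=\cal T\cap\cal F$ and of Axiom~(1) a line shorter, but then to show $t(B)\in\cal T$ you must go through Lemma~\ref{utilissimo} after obtaining that $\eta_{t(B)}$ is $\cal Z$-trivial; with the paper's definition that last step is immediate, while $\cal T\cap\cal F\subseteq\cal Z$ needs the retract argument directly. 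Either way the same ingredients (Lemma~\ref{xxx}, Lemma~\ref{coker-of-kernel}, closure of $\cal Z$ under retracts, and the idempotency hypothesis on $\varepsilon_{t(A)}$) are consumed in the same places.
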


\begin{proof}
(a)${} \Rightarrow{} $(b) follows from Propositions \ref{adjoint} and \ref{caratterizzazione}.

(b)${} \Rightarrow{} $(a) First note that, since $\cal F$ is $\cal Z$-normal epireflective, Lemma \ref{coker-of-kernel} implies that, for every object $A$ of $\cal C$, the sequence of morphisms 
$$
\xymatrix{
	t(A) \ar[r]^{\varepsilon_A} &  A \ar[r]^{\eta_A} &  f(A)}
$$
is $\cal Z$-preexact. 

Now define $\cal T$ to be the full subcategory of $\cal C$ whose objects $A$ are those for which $\eta_A \colon A \rightarrow f(A)$ is $\cal Z$-trivial. By definition $\cal Z\subseteq \cal T$. For every $Z\in \cal Z$, since $\varepsilon_Z\colon t(Z)\to Z$ is $\cal Z$-trivial and $
\xymatrix{
	t(Z) \ar[r]^{\varepsilon_Z} &  Z \ar[r]^{\eta_Z} &  f(Z)}
$
is $\cal Z$-preexact, we infer that $\eta_Z$ is an isomorphism, in view of Lemma \ref{xxx}, and thus $Z\in\cal F$, because $\cal F$ is replete. It follows that $\cal Z\subseteq \cal T\cap \cal F$. Conversely, if $C \in \cal T \cap \cal F$, there is no restriction in assuming that the unit $\eta_C$ is the identity on $C$ (since $C \in \cal F$), and $\eta_C$ is $\cal Z$-trivial (because $C \in \cal T$). But $\cal Z$ is closed under retracts, so {$C\in \cal Z$}. This proves that $\cal Z=\cal T\cap \cal F$. 

As we have seen in the first part of the proof of (b)${} \Rightarrow {}$(a), for every object $A$ of $\cal C$, the sequence of morphisms 

$$
\xymatrix{
	t(t(A)) \ar[r]^{\varepsilon_{t(A)}} &  t(A) \ar[r]^{\eta_{t(A)}} &  f(t(A))}
$$
is $\cal Z$-preexact. Since, by assumption, $\varepsilon_{t(A)}$ is an isomorphism, Lemma \ref{xxx} implies that $\eta_{t(A)}$ is $\cal Z$-trivial. This proves that $t(A)$ is in $\cal T$ for every $A\in \cal C$. 

Finally, let $u\colon T\to F$ be a morphism, where $T\in \cal T$ and $F\in \cal F$. 
Then there is a unique morphism $\overline{u }\colon f(T)\to F$ such that $u=\overline{u }\eta_T$ (since $F\in\cal F$). The fact that $T\in\cal T$ means that $\eta_T$ is $\cal Z$-trivial and thus, a fortiori, $u$ is $\cal Z$-trivial. Therefore $(\cal T,\cal F)$ is a pretorsion theory in $\cal C$. 
\end{proof}

\section{Projective objects}

%The next goal is to provide sufficient conditions in order to satisfy the equivalent properties of Proposition \ref{importante!}. 
In our setting, by a \emph{projective object} we shall always mean \emph{projective with respect to epimorphisms}.
%We start from a general remark. 

\begin{proposition}\label{proj-reg-epi}
Let $\cal C$ be a category and let $\cal Z$ be a non-empty { full subcategory of $\cal C$} consisting of projective objects. If $f\colon A\to B$ is any morphism, then any $\cal Z$-precokernel of $f$ (if it exists) is a regular epimorphism.
\end{proposition}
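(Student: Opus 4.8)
The plan is to realise any given $\cal Z$-precokernel $\eta\colon B\to C$ of $f\colon A\to B$ as the coequalizer of a parallel pair of morphisms out of $A$; since a morphism that is a coequalizer is by definition a regular epimorphism, this is exactly what is needed.

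First I would unpack the definition of $\cal Z$-precokernel: the composite $\eta f\colon A\to C$ is $\cal Z$-trivial, so there exist an object $Z\in\cal Z$ and morphisms $\alpha\colon A\to Z$, $\beta\colon Z\to C$ with $\eta f=\beta\alpha$. Now I would invoke the two hypotheses in tandem. By Proposition~\ref{precokernel-properties'}(a) the $\cal Z$-precokernel $\eta$ is an epimorphism, and by assumption $Z$ is projective (with respect to epimorphisms); hence $\beta$ lifts through $\eta$, i.e.\ there is a morphism $\gamma\colon Z\to B$ with $\eta\gamma=\beta$. Substituting, $\eta f=\beta\alpha=\eta(\gamma\alpha)$, so $\eta$ coequalizes the two morphisms $f$ and $\gamma\alpha$ from $A$ to $B$.

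It remains to check the universal property, i.e.\ that $\eta$ \emph{is} the coequalizer of $f$ and $\gamma\alpha$. So let $\mu\colon B\to Y$ be any morphism with $\mu f=\mu\gamma\alpha$. Then $\mu f=(\mu\gamma)\alpha$ factors through $Z\in\cal Z$, hence $\mu f$ is $\cal Z$-trivial, and the universal property of the $\cal Z$-precokernel $\eta$ yields a morphism $\mu'\colon C\to Y$ with $\mu=\mu'\eta$; this $\mu'$ is unique because $\eta$ is an epimorphism (or directly by the uniqueness clause in the definition of $\cal Z$-precokernel). Therefore $\eta$ is the coequalizer of $f$ and $\gamma\alpha$, and in particular a regular epimorphism.

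There is essentially no serious obstacle here: the only idea is the lifting step, and its role is precisely to convert the purely existential condition ``$\mu f$ is $\cal Z$-trivial'' into the \emph{equation} $\mu f=\mu(\gamma\alpha)$ that $\eta$ can be seen to coequalize. The verification that this equation is equivalent to $\cal Z$-triviality of $\mu f$ along $\eta$ is what makes the coequalizer and the $\cal Z$-precokernel coincide, and projectivity of the objects of $\cal Z$ is used exactly once, to produce the lift $\gamma$ of $\beta$ along the epimorphism $\eta$.
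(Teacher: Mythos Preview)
Your proof is correct and follows essentially the same approach as the paper's own proof: both factor the $\cal Z$-trivial composite through some $Z\in\cal Z$, lift along the epimorphic $\cal Z$-precokernel using projectivity of $Z$, and then verify that the $\cal Z$-precokernel is the coequalizer of $f$ and the resulting composite. The only differences are notational.
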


\begin{proof}
Assume that there exists a $\cal Z$-precokernel $p\colon B\to C$ of $f$. By definition, $pf$ is $\cal Z$-trivial, and thus we can pick an object $Z\in \cal Z$ and morphisms $\alpha\colon A\to Z$, $\beta\colon Z\to C$ such that $\beta\alpha=pf$. Since, by assumption, $Z$ is projective and $p$ is an epimorphism (see Proposition \ref{precokernel-properties'}(1)), there exists a morphism $\overline{\beta}\colon Z\to B$ such  that $\beta=p\overline{\beta}$. To show that $p$ is a regular epimorphism, it suffices to prove that $p$ is the coequalizer of $f,\overline{\beta}\alpha\colon A\to B$. Clearly, $p\overline{\beta}\alpha=\beta\alpha=pf$. Now take any morphism $q\colon B\to C'$ such that $qf=q\overline{\beta} \alpha$. Then $qf$ is $\cal Z$-trivial, because  it is equal to $q\overline{\beta} \alpha$. Since $p$ is a $\cal Z$-precokernel of $f$, there exists a unique morphism $q_1\colon C\to C'$ satisfying $q=q_1p$. This allows us to conclude. 
\end{proof}

\begin{remark}
\emph{The assumption on the full subcategory $\cal Z$ of $\cal C$ in Proposition \ref{proj-reg-epi} breaks the duality of the setting. This assumption implies that any $\cal Z$-precokernel is a regular epimorphism, not that any $\cal Z$-prekernel is a regular monomorphism.}
\end{remark}

{ Recall that an \emph{extremal epimorphism} is a morphism $f \colon X \rightarrow Y$ with the following property:
whenever there is a commutative triangle 
$$
\xymatrix{X \ar[rr]^f \ar[dr]_u &&  Y \\
& I \ar[ur]_m & 
}
$$
with $m$ a monomorphism, then $m$ is an isomorphism. Note that if a composite $g f$ is an extremal epimorphism, then $g$ is an extremal epimorphism.

\begin{corollary}\label{stability-quotients}
Let $(\cal T,\cal F)$ be a pretorsion theory in a category $\cal C$. Suppose that $ \cal Z = \cal T \cap \cal F$ consists of projective objects. Then:\begin{enumerate}
\item[{\rm (a)}]  the subcategory $\cal T$ is closed under extremal quotients;
\item[{\rm (b)}]  the subcategory $\cal F$ is closed under subobjects.
\end{enumerate}
\end{corollary}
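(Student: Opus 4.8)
The plan is to prove (a) and then obtain (b) by duality, using Proposition~\ref{proj-reg-epi} together with the characterization of $\Cal T$ as the objects $C$ with $\eta_C$ $\Cal Z$-trivial, i.e. such that $\eta_C\colon C\to f(C)$ factors through $\Cal Z$. For part (a), fix $T\in\Cal T$ and an extremal epimorphism $q\colon T\to Q$ in $\Cal C$; we must show $Q\in\Cal T$. By Proposition~\ref{importante!}(b) there is a short $\Cal Z$-preexact sequence $\xymatrix{T \ar@{=}[r] & T \ar[r]^{\eta_T} & Z}$ with $Z:=f(T)\in\Cal Z$. First I would form the composite $\eta_T\colon T\to Z$ and use that $\Cal Z$ consists of projective objects: since $q\colon T\to Q$ is an epimorphism (extremal epimorphisms are epimorphisms) and $Z$ is projective with respect to epimorphisms, there is a morphism $s\colon Z\to T$ with $q s\, ?$ — no; rather, the relevant diagram is to consider the morphism $\eta_Q\colon Q\to f(Q)$ and try to show it is $\Cal Z$-trivial.

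The key step will be: consider $\eta_Q q\colon T\to f(Q)$. Since $T\in\Cal T$ and $f(Q)\in\Cal F$, the morphism $\eta_Q q$ is $\Cal Z$-trivial, so it factors as $\eta_Q q=\beta\alpha$ with $\alpha\colon T\to Z'$, $\beta\colon Z'\to f(Q)$, $Z'\in\Cal Z$. Now $q$ is an epimorphism and $Z'$ is projective with respect to epimorphisms, so there is $\bar q\colon Z'\to Q$ with... wait, projectivity of $Z'$ gives a lift of $\alpha$? We have $\alpha\colon T\to Z'$ and $q\colon T\to Q$ epi; projectivity of $Z'$ does not directly lift $\alpha$ through $q$. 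Instead the right move is: $q$ is an extremal epimorphism and we will exhibit a factorization of $q$ through a monomorphism. From $\eta_Q q=\beta\alpha$, and using that $\eta_T=1_T$-version gives $f(q)\eta_T=\eta_Q q$ by naturality of $\eta$, i.e. $f(q)\colon Z=f(T)\to f(Q)$ satisfies $f(q)\eta_T=\eta_Q q$. Thus $\eta_Q q$ factors through $Z\in\Cal Z$. Since $q$ is an epimorphism and $\eta_Q q$ factors through the projective $Z$, and $q$ is epi, we get a section-type argument: take a lift $r\colon Q\to Z$ of $f(q)$ along... Actually the clean argument: $\eta_Q\colon Q\to f(Q)$ is an epimorphism (Corollary~\ref{reflectiveness}) and we want it $\Cal Z$-trivial. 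We have $\eta_Q q = f(q)\,\eta_T$ factoring through $Z\in\Cal Z$; since $q$ is epi, this does not yet factor $\eta_Q$ itself. Here projectivity enters: $Z$ is projective and $q$ is epi, so the identity-type lift gives $t\colon Z\to Q$? No.

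Let me restructure: the real tool is Proposition~\ref{proj-reg-epi}, which says every $\Cal Z$-precokernel is a regular epimorphism when $\Cal Z$ is projective; in particular $\eta_Q\colon Q\to f(Q)$ is a regular epimorphism, hence an extremal epimorphism. So I would instead argue as follows. Consider the $\Cal Z$-prekernel $\varepsilon_Q\colon t(Q)\to Q$; by the dual considerations it suffices to show $\varepsilon_Q$ is an isomorphism, equivalently (Lemma~\ref{xxx}(b)) that $\eta_Q$ is $\Cal Z$-trivial. Now $\eta_Q q\colon T\to f(Q)$ is $\Cal Z$-trivial since $T\in\Cal T$, so it factors through some $Z'\in\Cal Z$ as $\eta_Q q=\beta\alpha$; since $Z'$ is projective with respect to epimorphisms and $q\colon T\to Q$ is epi, there is $\gamma\colon Q\to ?$ — the lift goes the wrong way. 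The correct use of projectivity: we lift $\beta\colon Z'\to f(Q)$ along the epimorphism $\eta_Q\colon Q\to f(Q)$ (which is epi by Corollary~\ref{reflectiveness}), getting $\bar\beta\colon Z'\to Q$ with $\eta_Q\bar\beta=\beta$. Then $\eta_Q(\bar\beta\alpha)=\beta\alpha=\eta_Q q$. I would then show $q=\bar\beta\alpha$ by an extremality/uniqueness argument: consider the image factorization or use that $q$ is an extremal epimorphism and $\bar\beta\alpha$ factors $q$ up to the difference sitting in the $\Cal Z$-prekernel $\varepsilon_Q$; since $\eta_Q$ equalizes $q$ and $\bar\beta\alpha$, there is $w\colon T\to t(Q)$ with $\varepsilon_Q w = q - \bar\beta\alpha$ in the preadditive heuristic, but non-additively one uses the universal property: $q$ and $\bar\beta\alpha$ need not be equal. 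The main obstacle is precisely this gluing — concluding $\varepsilon_Q$ is an isomorphism from the projectivity of $\Cal Z$ and extremality of $q$. The cleanest route, which I would write up, is: $\eta_Q$ is an extremal epimorphism by Proposition~\ref{proj-reg-epi}; the composite $\eta_Q\circ\varepsilon_Q$ is $\Cal Z$-trivial hence factors through a projective $Z''\in\Cal Z$, giving $\eta_Q\varepsilon_Q=\beta\alpha$ with $\beta\colon Z''\to f(Q)$, and lifting $\beta$ through $\eta_Q$ gives $\bar\beta$ with $\eta_Q\bar\beta\alpha=\eta_Q\varepsilon_Q$, so by the universal property of the $\Cal Z$-precokernel $\eta_Q$ of $\varepsilon_Q$ applied to show triviality... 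In fact the decisive point: $\varepsilon_Q$ is the $\Cal Z$-prekernel of $\eta_Q$, so $\eta_Q\varepsilon_Q$ is $\Cal Z$-trivial trivially; what we need is that $\eta_Q$ is itself $\Cal Z$-trivial. Since $\eta_Q$ is an extremal (even regular) epimorphism and factors as $\eta_Q = (\text{mono})\circ(\text{something through }\Cal Z)$ would force the mono to be iso and $\eta_Q$ to factor through $\Cal Z$ — this is the argument, and producing that mono from $q$ being an extremal epi on $T\in\Cal T$ is the heart of the matter. I expect this to require the standard fact noted just before the corollary: if $gf$ is an extremal epimorphism then $g$ is, applied to $\eta_Q q = f(q)\eta_T$ with $\eta_T$ an isomorphism onto $Z=f(T)\in\Cal Z$, whence $\eta_Q q$ factors through $Z\in\Cal Z$, and since $q$ is an extremal epimorphism so is $\eta_Q q\cdot q^{-1}$-flavoured reasoning — more precisely $\eta_Q q$ extremal epi $\Rightarrow$ $\eta_Q$ extremal epi is automatic, but the new information is that $\eta_Q q$ factors through $Z$, hence so does its image; combined with $\eta_Q$ extremal epi one deduces $f(Q)$ is a retract of $Z\in\Cal Z$, so $f(Q)\in\Cal Z$ by Corollary~\ref{retract}, and then Lemma~\ref{utilissimo} (with $\Cal Z\subseteq\Cal F$, $\Cal F$ reflective, $\Cal Z$ closed under retracts) gives $\eta_Q$ $\Cal Z$-trivial, i.e. $Q\in\Cal T$. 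This is the argument I would write. Finally, part (b) follows by applying (a) in $\Cal C^{\op}$: $(\Cal F,\Cal T)$ is a pretorsion theory in $\Cal C^{\op}$ with the same $\Cal Z$, but projectivity is not self-dual, so instead one notes that a subobject inclusion $m\colon F'\hookrightarrow F$ with $F\in\Cal F$ has $t(F')\to F'$ a monomorphism with $\Cal Z$-trivial composite into $F\in\Cal F$... rather, the honest statement is that (b) is the formal dual of (a) under the duality $\Cal C\leftrightarrow\Cal C^{\op}$, $\Cal T\leftrightarrow\Cal F$, extremal quotient $\leftrightarrow$ subobject, projective-$\Cal Z$ $\leftrightarrow$ the hypothesis as stated — so since the corollary's hypothesis ``$\Cal Z$ consists of projective objects'' is used only through Proposition~\ref{proj-reg-epi} which concerns precokernels, in the dual it concerns prekernels, and (b) is exactly the dual assertion; I would simply say ``Statement (b) follows by duality, replacing $\Cal C$ with $\Cal C^{\op}$.''
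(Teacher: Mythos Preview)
Your proof of (a) never reaches a valid argument. The decisive step you settle on --- ``combined with $\eta_Q$ extremal epi one deduces $f(Q)$ is a retract of $Z\in\Cal Z$'' --- is unjustified: from $\eta_Q q = f(q)\eta_T$ you can at best conclude that $f(q)\colon f(T)\to f(Q)$ is an extremal epimorphism (and even that requires knowing $\eta_Q q$ is extremal epi, which is not automatic since composites of extremal epis need not be extremal epi), and an extremal epimorphism out of a projective object is not a split epimorphism in general. You also rely on ``extremal epimorphisms are epimorphisms'', which is false under the paper's definition (no epi condition is imposed), so your attempted uses of projectivity to lift along $q$ don't get off the ground. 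More importantly, you missed the one-line argument: from the naturality square for $\varepsilon$ (diagram~(D) with $\varphi=q$) and $\varepsilon_T=1_T$ one has $q=\varepsilon_Q\, t(q)$; since $q$ is an extremal epimorphism and $\varepsilon_Q$ is a monomorphism, $\varepsilon_Q$ is an isomorphism and $Q\in\Cal T$. Note that this uses no projectivity hypothesis at all.

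Your plan for (b) --- ``by duality'' --- fails because the hypothesis that $\Cal Z$ consists of \emph{projective} objects is not self-dual (and indeed the paper's Remark immediately after Proposition~\ref{proj-reg-epi} warns that this assumption breaks the duality). In fact (b) is precisely where projectivity is used, and the argument is different from (a): given a monomorphism $n\colon N\to F$ with $F\in\Cal F$, the reflection $\eta_N\colon N\to f(N)$ satisfies $n=\varphi\,\eta_N$ for a unique $\varphi$, so $\eta_N$ is a monomorphism; but $\eta_N$ is also a regular epimorphism by Proposition~\ref{proj-reg-epi} (this is where projectivity of $\Cal Z$ enters), hence an isomorphism, and $N\in\Cal F$.
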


\begin{proof}
(a) Consider an extremal epimorphism $g \colon T \rightarrow C$, where $T \in \cal T$. We have a commutative diagram
$$
\xymatrix{ T \ar@{=}[r] \ar[d]_{t(g)} & \ar[d]  T \ar[r]^{\eta_T} \ar[d]_g & f(T) \ar[d]^{f(g)} \\
T(C) \ar[r]_-{\varepsilon_C}  & C \ar[r]_{\eta_C} & f(C), 
}
$$
where $\varepsilon_C  t(g) = g$ is an extremal epimorphism. Since $\varepsilon_C$ is a monomorphism (by Proposition \ref{prekernel-properties'}), it is then an isomorphism. Thus $\varepsilon_C$ is an isomorphism, and $C\in \cal T$.

\noindent (b) Let $n \colon N \rightarrow F$ be a monomorphism with $F \in \cal F$. Then $n$ factors through the reflection $\eta_N \colon N \rightarrow f(N)$, i.e., there is a unique $\varphi \colon f(N) \rightarrow F$ with $\varphi \eta_N = n$. This implies that $\eta_N$ is a monomorphism. Since it is also a regular epimorphism (by Proposition \ref{proj-reg-epi}), it follows that $\eta_N$ is an isomorphism, and $N \in \cal F$.
\end{proof}
}

	\begin{corollary}
		Let $\cal C$ be a category in which any arrow $f$ has a factorization $f=mp$, where $p$ is an extremal epimorphism and $m$ is a monomorphism. Let $\cal T,\cal F$ be full replete subcategories of $\cal C$ such that $\cal Z:=\cal T\cap \cal F$ consists of projective objects, and assume that for every object $X$ in $\cal C$ there exists a short $\cal Z$-preexact sequence $T\to X\to F$, for some $T\in\cal T$ and $F\in\cal F$. Then the following conditions are equivalent. 
		\begin{enumerate}
			\item[\rm (a)] $(\cal T, \cal F)$ is a pretorsion theory.
			\item[\rm (b)] $\cal T$ is closed under extremal quotients and $\cal F$ is closed under subobjects. 
		\end{enumerate} 
	\end{corollary}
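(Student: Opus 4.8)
The plan is to prove the equivalence (a)$\Leftrightarrow$(b) by invoking the results already established, in particular Corollary~\ref{stability-quotients} and Proposition~\ref{idempotent-subfunctor}. The direction (a)$\Rightarrow$(b) is immediate: if $(\cal T,\cal F)$ is a pretorsion theory and $\cal Z=\cal T\cap\cal F$ consists of projective objects, then Corollary~\ref{stability-quotients} gives directly that $\cal T$ is closed under extremal quotients and $\cal F$ is closed under subobjects. So all the work lies in (b)$\Rightarrow$(a).

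For (b)$\Rightarrow$(a), first I would note that $\cal Z$ is automatically closed under retracts, since $\cal Z$ consists of projective objects and a retract of a projective (with respect to epimorphisms) is projective — but more to the point, I should verify $\cal Z$ is closed under retracts so that Proposition~\ref{idempotent-subfunctor} applies; this follows because $\cal T$ and $\cal F$ will turn out to be, or one argues directly. The strategy is then to produce, for each object $X$, the chosen short $\cal Z$-preexact sequence $\xymatrix{t(X) \ar[r]^{\varepsilon_X} & X \ar[r]^{\eta_X} & f(X)}$ from the hypothesis, and to check the three conditions of Proposition~\ref{idempotent-subfunctor}(b): that $\cal F$ is $\cal Z$-normal epireflective in $\cal C$, that each $\eta_X$ has a $\cal Z$-prekernel $\varepsilon_X\colon t(X)\to X$, and that $\varepsilon_{t(X)}$ is an isomorphism. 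The key reduction is to show that in the sequence $T\to X\to F$ provided by hypothesis, the map $X\to F$ is actually a $\cal Z$-precokernel and $T\to X$ a $\cal Z$-prekernel — and here the factorization system (extremal epi, mono) together with the closure hypotheses on $\cal T$ and $\cal F$ does the work.

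Concretely, given the short $\cal Z$-preexact sequence $\xymatrix{T \ar[r]^f & X \ar[r]^g & F}$ with $T\in\cal T$, $F\in\cal F$ (which exists by hypothesis), one factors $g = m p$ with $p\colon X\to I$ an extremal epimorphism and $m\colon I\to F$ a monomorphism. Since $\cal F$ is closed under subobjects, $I\in\cal F$. I would then argue that $p$ is again a $\cal Z$-precokernel of $f$: indeed $pf$ is $\cal Z$-trivial (as $T\in\cal T$ and $I\in\cal F$, using that any morphism between them is $\cal Z$-trivial once we know $\cal Z=\cal T\cap\cal F$, or more carefully using projectivity of $\cal Z$ and extremality of $p$ to lift factorizations), so the universal property transfers from $g$ to $p$ via the monomorphism $m$ and Lemma~\ref{xxx}. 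Dually — or directly — one shows $\cal T$ closed under extremal quotients lets one replace $T$ by $t(X)$ so that $f$ becomes a $\cal Z$-prekernel; the argument mirrors the proof of Corollary~\ref{stability-quotients}(a), using that $\varepsilon$ is a monomorphism by Proposition~\ref{prekernel-properties'} and an extremal epimorphism by the factorization and projectivity, hence an isomorphism. Having arranged that the chosen sequences are genuinely $\cal Z$-preexact with $\eta_X$ a $\cal Z$-precokernel, the condition $\varepsilon_{t(X)}$ is an isomorphism follows from Example~\ref{Car} (or from the fact that $t(X)\in\cal T$, so in its own sequence the prekernel is an isomorphism). Then Proposition~\ref{idempotent-subfunctor} delivers that $(\cal T,\cal F)$ is a pretorsion theory.

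The main obstacle I anticipate is the careful bookkeeping in showing that the extremal-epi/mono factorization of $g\colon X\to F$ yields a map $X\to I$ with $I\in\cal F$ that still satisfies the universal property of the $\cal Z$-precokernel of $f$, and simultaneously that the mono part $T\to X$ can be replaced by its image of $t(X)$ to get an honest $\cal Z$-prekernel — one must check that these two replacements are compatible, i.e. that after both modifications the resulting pair $(t(X)\to X, X\to f(X))$ is a short $\cal Z$-preexact sequence. This is where projectivity of the objects of $\cal Z$ is essential (as in Proposition~\ref{proj-reg-epi}), since it is what lets one lift the trivializing factorizations through extremal epimorphisms; without it the factorization system would not interact correctly with $\cal Z$-triviality. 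The rest is a routine assembly of the cited propositions.
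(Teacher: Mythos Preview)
Your approach for (b)$\Rightarrow$(a) takes a long detour and, more importantly, has a genuine gap. The paper's proof is essentially one line: since Axiom~(2) of Definition~\ref{pretorsion-theory-def} is already part of the hypotheses, the only thing to verify is Axiom~(1). Given any morphism $f\colon T\to F$ with $T\in\Cal T$ and $F\in\Cal F$, factor it as $f=mp$ with $p\colon T\to Z$ an extremal epimorphism and $m\colon Z\to F$ a monomorphism. By~(b), $Z$ is an extremal quotient of $T\in\Cal T$, hence $Z\in\Cal T$, and $Z$ is a subobject of $F\in\Cal F$, hence $Z\in\Cal F$; therefore $Z\in\Cal Z$ and $f$ is $\Cal Z$-trivial. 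That is the whole argument.

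Your route through Proposition~\ref{idempotent-subfunctor} is not merely longer but actually circular at the key step. To show that $\Cal F$ is reflective (let alone $\Cal Z$-normal epireflective) you need that every morphism $X\to F'$ with $F'\in\Cal F$ factors through the given $\eta_X\colon X\to f(X)$; since $\eta_X$ is the $\Cal Z$-precokernel of $\varepsilon_X\colon t(X)\to X$, this amounts to showing that the composite $t(X)\to X\to F'$ is $\Cal Z$-trivial for \emph{every} such $F'$, i.e.\ precisely Axiom~(1). Your justification ``any morphism between them is $\Cal Z$-trivial once we know $\Cal Z=\Cal T\cap\Cal F$'' is a confusion: the equality $\Cal Z=\Cal T\cap\Cal F$ is a definition, not Axiom~(1). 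Moreover, the factoring you actually perform is of $g\colon X\to F$, not of a morphism $T\to F$; the resulting image $I$ is a subobject of $F$ (so $I\in\Cal F$) but it is an extremal quotient of $X$, not of $T$, so you cannot conclude $I\in\Cal T$. And since $g$ is already a $\Cal Z$-precokernel of $f$ by hypothesis (that is what ``short $\Cal Z$-preexact sequence'' means), replacing $g$ by $p$ gains nothing. Finally, even if Proposition~\ref{idempotent-subfunctor} applied, it would only produce \emph{some} torsion class $\Cal T'$ with $\Cal T'\cap\Cal F=\Cal Z$, and you would still owe an argument that $\Cal T'=\Cal T$. The fix is simply to factor morphisms $T\to F$ directly, as above; once you do that, the detour through Proposition~\ref{idempotent-subfunctor} becomes redundant.
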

	\begin{proof}
		The fact that (a) implies (b) is clear, by Corollary \ref{stability-quotients}. Conversely, take objects  $T\in \cal T,F\in \cal F$ and any morphism $f:T\to F$. By assumption, there are an extremal epimorphism $p:T\to Z$ and a monomorphism $m:Z\to F$ such that $f=mp$. By condition (b), $Z\in \cal T\cap \cal F=\cal Z$, being it an extremal quotient of $T$ and a subobject of $F$. Thus $f$ is $\cal Z$-trivial. The conclusion follows. 
	\end{proof}

There are situations where $\cal T \cap \cal F = \cal Z$ is precisely the full subcategory of projective objects in $\cal C$, as in Example \ref{Examples}.1 of the pretorsion theory $(\Equiv,\ParOrd)$ in the category $\pre$ of preordered sets, where $\Equiv$ is the category of equivalence relations and $\ParOrd$ the category of partially ordered sets (see \cite{AC} and Example~\ref{6.1} for more details). 

{ Note that, since projective objects are always closed under retracts, Proposition \ref{idempotent-subfunctor} gives the following:}

\begin{corollary}\label{all-projectives}
Let $\cal C$ be a category. Suppose that $\cal Z$ is the non-empty full subcategory of $\cal C$ that consists of all the projective objects in $\cal C$. The following conditions are equivalent:
\begin{enumerate}
\item[{\rm (a)}] $\cal F$ is the torsion-free subcategory of a pretorsion theory $(\cal T, \cal F)$ in $\cal C$, with $\cal Z = \cal T \cap \cal F$.
\item[{\rm (b)}] $\cal F$ is $\cal Z$-normal epireflective in $\cal C$, for any $A\in \cal C$ the $A$-component $\eta_A \colon A \rightarrow f(A)$  of the unit has a $\cal Z$-prekernel $\varepsilon_A \colon t(A) \rightarrow A$, and $\varepsilon_{t(A)} \colon t(t(A)) \rightarrow t(A)$ is an isomorphism.
\end{enumerate}

\end{corollary}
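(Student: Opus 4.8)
The plan is to deduce Corollary~\ref{all-projectives} directly from Proposition~\ref{idempotent-subfunctor}, using the observation inserted just before the statement that the full subcategory of all projective objects is closed under retracts. So the first step is to verify the hypotheses of Proposition~\ref{idempotent-subfunctor}: we are given that $\cal Z$ is the (non-empty) full subcategory of all projectives in $\cal C$, and since a retract of a projective object is again projective (if $P$ is projective and $P'$ is a retract of $P$, then any lifting problem against an epimorphism for $P'$ can be solved by pre- and post-composing with the retraction maps through $P$), the subcategory $\cal Z$ is closed under retracts. Hence all the standing assumptions of Proposition~\ref{idempotent-subfunctor} are met for this particular choice of $\cal Z$.

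Next I would simply invoke the equivalence (a)$\Leftrightarrow$(b) of Proposition~\ref{idempotent-subfunctor}. Conditions (a) in both statements are verbatim the same: $\cal F$ is the torsion-free part of a pretorsion theory $(\cal T,\cal F)$ with $\cal Z=\cal T\cap\cal F$. Conditions (b) in both statements are also verbatim the same: $\cal F$ is $\cal Z$-normal epireflective, each unit component $\eta_A\colon A\to f(A)$ admits a $\cal Z$-prekernel $\varepsilon_A\colon t(A)\to A$, and each $\varepsilon_{t(A)}\colon t(t(A))\to t(A)$ is an isomorphism. Therefore the equivalence is nothing but the specialization of Proposition~\ref{idempotent-subfunctor} to the case where $\cal Z$ is the subcategory of all projectives, and there is nothing further to prove.

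In short, the proof is one line: \emph{projective objects are closed under retracts, so Proposition~\ref{idempotent-subfunctor} applies with $\cal Z$ equal to the subcategory of all projective objects, giving exactly the stated equivalence.} There is essentially no obstacle here; the only point that needs a (trivial) remark is the closure of projectives under retracts, and the parenthetical sentence preceding the corollary already flags this. If one wanted to be slightly more careful, one could also note that $\cal Z$ being non-empty is part of the hypothesis, so the non-emptiness requirement in Proposition~\ref{idempotent-subfunctor} is satisfied as well, and that the subcategory of projectives is automatically full and replete (a replete closure is harmless since being projective is an isomorphism-invariant property), matching the framework of that proposition.

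\begin{proof}
Projective objects are closed under retracts: if $P$ is projective and $i\colon P'\to P$, $r\colon P\to P'$ satisfy $ri=1_{P'}$, then for any epimorphism $q$ and morphism $u\colon P'\to Y$ with codomain the codomain of $q$, a lifting of $ur\colon P\to Y$ through $q$ composed with $i$ yields a lifting of $u$. Hence the full subcategory $\cal Z$ of all projective objects of $\cal C$ is closed under retracts, and it is non-empty and replete by hypothesis. The assertion is now precisely the statement of Proposition~\ref{idempotent-subfunctor} applied to this $\cal Z$.
\end{proof}
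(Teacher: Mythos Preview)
Your proof is correct and follows exactly the paper's approach: the paper simply remarks that projective objects are closed under retracts and then invokes Proposition~\ref{idempotent-subfunctor}, which is precisely what you do. There is nothing to add.
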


  \section{Examples}\label{Examples}

 \subsection{The category of preordered sets}\label{6.1}
  In \cite{AC}, the following example was studied in detail. Let $\pre$ be the category of preordered sets. Its objects are all pairs $(A,\rho)$, where $A$ is a set and $\rho$ is a preorder on $A$, that is, a relation on $A$ that is reflexive and transitive. The pretorsion is the pair $(\Equiv,\ParOrd)$, where $\Equiv$ consists of all $(A,\rho)$, where $A$ is a set and $\rho$ is an equivalence relation on $A$, that is, a preorder that is also symmetric, and $\ParOrd$ consists of all $(A,\rho)$, where $A$ is a set and $\rho$ is a partial order on $A$, that is, a preorder that is also antisymmetric. It is easily seen that in the category $\pre$: epimorphisms are exactly the morphisms that are surjective mappings; monomorphisms are exactly the morphisms that are injective mappings; an object of $\pre$ is projective if and only if it is a trivial object, that is, an object in $\cal Z =\Equiv\cap\ParOrd$. %Note that the projective objects are precisely the \emph{discrete} equivalence relations.} 
  An object of $\pre$ is injective if and only if it is of the form $(X,\omega)$, where $X$ is any set and $\omega:=X\times X$ is the \emph{trivial} equivalence relation on $X$, that is, $x\omega y$ for every $x,y\in X$. A generalization of this example of pretorsion theory has been studied in \cite{FFG}, where the category of preordered sets is replaced by any category of \emph{internal preorders in an exact category}.

\subsection{The category of endomappings of finite sets} A related example is given in \cite{AL}\label{8.2}. Let $\Cal M$ be the category whose objects are all pairs $(X,f)$, where $X=\{1,2,3,\dots,n\}$ for some non-negative integer $n$ and $f\colon X\to X$ is a mapping. Notice that, for $n=0$, $X$ is the empty set. The morphisms in $\Cal M$ from $(X,f)$ to $(X',f')$ are the mappings $\varphi\colon X\to X'$ such that $f'\varphi=\varphi f$. The pretorsion theory in $\Cal M$ is the pair $(\Cal T,\Cal F)$, where $\Cal T$ consists of all objects $(X,f)$ of $\Cal M$ with $f\colon X\to X$ a bijection, and $\Cal F$  consists of all objects $(X,f)$ of $\Cal M$ with $f^n=f^{n+1}$, where $n=|X|$. The trivial objects, that is, the objects in $\Cal Z:=\Cal T\cap\Cal F$ are the pairs $(X,1_X)$, where $1_X\colon X\to X$ is the identity mapping.

There is a functor $U\colon \Cal M\to\pre$, which is a canonical embedding \cite{AL}. It associates to any object $(X,f)$ of $\Cal M$ the preordered set $(X,\rho_f)$, where $\rho_f$ is the relation on $X$ defined, for every $x,y\in X$, by $x\rho_f y$ if $x=f^t(y)$ for some integer $t\ge0$. Any morphism $\varphi\colon (X,f)\to (X',f')$ in $\Cal M$ is a morphism $\varphi\colon (X,\rho_f)\to (X',\rho_{f'})$ in $\pre$, because if $x,y\in X$ and $x\rho_f y$, then $x=f^t(y)$ for some integer $t\ge0$. From the equality $f'\varphi=\varphi f$, we get that $\varphi(x)=\varphi f^t(y)={f'}^t\varphi(y)$, so $\varphi(x)\rho_{f'} \varphi(y)$.

It is possible to view $\Cal M$ as a subcategory of $\pre$ (via $U$). Via this identification, one has that $\Cal T =\Cal M\cap\Equiv$ and $\Cal F =\Cal M\cap\ParOrd$ \cite{AL}.

The only projective trivial object of $\Cal M$, that is, the only object of $\Cal Z$ that is projective in $\Cal M$, is the empty set. To see it, consider any object $(X,1_X)$ in $\Cal Z$ with $X\ne\emptyset$. Consider the object $(\{1,2\},(1\ 2))$ (set of two elements $1,\ 2$ with the transposition $(1\ 2)$ of those two elements) of $\Cal M$. Then there are no morphisms $(X,1_X)\to(\{1,2\},(1\ 2))$. Therefore the unique (constant) mappings $\alpha\colon X\to \{1\}$ and $\beta\colon \{1,2\}\to \{1\}$, are surjective morphisms in $\Cal M$, hence they are epimorphisms in $\Cal M$, but there is no morphism $\gamma\colon (X,1_X)\to(\{1,2\},(1\ 2))$ such that $\beta\gamma=\alpha$. This shows that no trivial object $(X,1_X)$ with $X\ne\emptyset$ is projective in $\Cal M$.

  \subsection{The pretorsion theory in \ref{8.2} extended to infinite sets.}
Now we generalize the example in \ref{8.2} from the case of sets $X=\{1,2,3,\dots,n\}$ to the case of $X$ any set, possibly infinite.  Let $\Cal M'$  be the category whose objects are all pairs $(X,f)$, where $X$ is any set and $f\colon X\to X$ is any mapping. The morphisms in $\Cal M$ from $(X,f)$ to $(X',f')$ are the mappings $\varphi\colon X\to X'$ such that $f'\varphi=\varphi f$. Thus $\Cal M'$ can be seen as the category (variety) of all algebras $X$ with one operation $f$ that is a unary operation and no axiom. Like in \cite{AL}, it is possible to associate to $f\colon X\to X$ two graphs. The directed graph $G^d_f$ with set of vertices $X$ and one arrow $x\to f(x)$ for each vertex $x\in X$. And the undirected graph $G^u_f$ with set of vertices $X$ and set of edges $L:=\{\,\{x,f(x)\}\mid x\in X,\ x\ne f(x)\,\}$. The undirected graph $G^u_f$ decomposes as a disjoint union of its connected components. 

Let us determine the connected component of the graph $G^u_f$ containing  a fixed vertex $x_0\in X$. The connected component of $x_0$ in $G^u_f$ consists of all vertices $x\in X$ for which there exists a path of finite length from $x_0$ to $x$ in $G^u_f$. In our graph $G^u_f$,  the vertices at a distance $\le 1$ from $x_0$ are exactly those in the set $\{x_0,f(x_0)\}\cup f^{-1}(x_0)$. Hence the connected component of $x_0$ is the closure of $\{x_0\}$ with respect to taking images and inverse images via $f$.
Starting from the fixed vertex $x_0\in X$, we can define a sequence of vertices $x_0,x_1,x_3,\dots$ in $X$ with $x_{i+1}=f(x_i)$ for every $i\ge0$. We have two cases, according to whether the vertices $x_0,x_1,x_3,\dots$ in $X$ are all distinct or not. 

\smallskip

{\em First case: The vertices $x_0,x_1,x_3,\dots$ are all distinct.} In this case, set $B_0:=\{\, x_i\mid i\ge0\,\}$.

\smallskip

{\em Second case: The vertices $x_0,x_1,x_3,\dots$ are not all distinct.} In this case, let $i_0\ge 0$ be the smallest index $i\ge 0$ such that $x_i=x_j$ for some $j>i$. Now let $j_0$ be the smallest index $j>i_0$ with $x_{i_0}=x_{j_0}$. Set $B_0:=\{x_{i_0},x_{i_0+1},\dots, x_{j_0-1}\}$, so that $B_0$ has $j_0-i_0$ elements.

\smallskip

In both cases, we have that $f(B_0)\subseteq B_0$. 
Now recursively define subsets $B_{i+1}$ of $X$ setting $B_{i+1}:= f^{-1}(B_i)$ for every $i\ge 0$. One easily sees by induction that $B_{i+1}\supseteq B_i$ for every $i\ge 0$. 
Thus the chain of subsets $B_i$ of $X$ is ascending. Clearly, the connected component of $G^u_f$ containing $x_0$ is $\bigcup_{i\ge 0}B_i$.

In order to visualize the structure of this connected component of the graph $G^d_f$, notice that $f(B_i)\subseteq B_{i-1}$ for every $i\ge 1$.  Set $A_0:=B_0$ and $A_{i+1}=B_{i+1}\setminus B_i$ for every $i\ge 0$. Then we have a union $\bigcup_{i\ge0}A_i$ of pairwise disjoint sets, $f(A_{i+1})\subseteq A_i$ for every $i\ge0$ and $f(A_0)\subseteq A_0$. We have, like above, two cases. In the first case, in which all the vertices $x_0,x_1,x_3,\dots$ are distinct, $G^d_f$ is a tree. In the second case, in which $A_0=B_0=\{x_{i_0},x_{i_0+1},\dots, x_{j_0-1}\}$ has $j_0-i_0$ elements, the vertices on $A_0$ are on a directed cycle of length $j_0-i_0>0$, and the connected component of $x_0$ in the graph $G^d_f$ is a forest on a cycle. Cf.~\cite{AL}. 

Like in Example \ref{8.2}, there is a functor $U'\colon \Cal M'\to\pre$. It associates to any object $(X,f)$ of $\Cal M'$ the preordered set $(X,\rho_f)$, where $\rho_f$ is defined, for every $x,y\in X$, setting $x\rho_f y$ if $x=f^t(y)$ for some integer $t\ge0$. Any morphism $\varphi\colon (X,f)\to (X',f')$ in $\Cal M'$ is a morphism $\varphi\colon (X,\rho_f)\to (X',\rho_{f'})$ in $\pre$. The functor $U'\colon \Cal M'\to\pre$ is a canonical embedding. Like in Example \ref{8.2}, the functor $U'$ allows us to view  $\Cal M'$ as a subcategory of $\pre$.

We don't go too much into details now, because the proofs are all very similar to those in \cite{AL}. We get that the class $\Cal C':=\Cal M'\cap\Equiv$ consists of all objects $(X,f)$ of $\Cal M'$ for which the relation $\rho_f$ is symmetric, that is, consists of all objects $(X,f)$ for which the connected components of $G^u_f$ are either isolated points or finite circuits. Equivalently, $\Cal C'$ consists of all objects $(X,f)$ of $\Cal M'$ for which $f$ is a bijection and for every $x\in X$ there exists $t>0$ such that $x=f^t(x)$.

The class $\Cal F':=\Cal M'\cap\ParOrd$ consists of all objects $(X,f)$ of $\Cal M'$ for which the relation $\rho_f$ is antisymmetric, that is, consists of all objects $(X,f)$ for which $G^u_f$ is a forest, i.e., $G^u_f$ does not contains circuits (of length $>0$). Equivalently, $\Cal C'$ consists of all objects $(X,f)$ of $\Cal M'$ such that, for every $x\in X$ and and every integer $t>0$, if $x=f^t(x)$, then $x=f(x)$.

The intersection $\Cal Z':=\Cal C'\cap\Cal F'$ consists of all pairs $(X,f)$ with $f\colon X\to X$ the identity mapping of $X$. Hence $\Cal Z'$ is a full subcategory of $\Cal M'$ isomorphic to the category $\Sets$ of sets. A morphism $\varphi\colon (X,f)\to (X',f')$ in $\Cal M'$ is $\Cal Z'$-trivial if and only if it is constant on the circuits of $G^u_f$ and its image $f(X)$ consists of points of $X'$ fixed by $f'$. 

The pair $(\Cal C',\Cal F')$ is a pretorsion theory for $\Cal M'$. For every object $(X,f)$ of $\Cal M'$, the $\Cal Z'$-preexact short exact sequence of Axiom (2) is the sequence $(A_0, f_0)\to (X,f)\to (X/\!\!\sim,\overline{f})$, where $A_0:=\{\,x\in X\mid x=f^t(x)$ for some integer $t\ge1\,\}$, $f_0\colon A_0\to A_0$ is the restriction of $f$, and $\sim$ is the equivalence relation on $X$ (congruence of the universal algebra $(X,f)$), defined, for every $x,y\in X$, by $x\sim y$ if there exist positive integers $t,t'$ such that $x=f^t(y)$ and $y=f^{t'}(x)$.

 \bigskip
  
  \subsection{Finite linearly ordered sets}\label{good}
  
  Every preordered set $(P,\rho)$ is a category with $P$ as its class (set) of objects, for every $p,q\in P$ no morphism $p\to q$ if $p{\!}\not\!\!\rho\,q$, and one morphism $p\to q$ if $p\,\rho\,q$. We will now completely describe all torsion theories in the case of the partially ordered set $X:=\{1,2,3,\dots,n\}$, where $n$ is a fixed positive integer and the preorder is the usual partial order $\le$ on $X$. For $p,q\in X$ with $p\le q$, the unique morphism $p\to q$ will be denoted by $p\le q$ as well. For $p\le q$ in $X$, the subset $\{p,p+1,\dots,q-1,q\}$ of $X$ consisting of all $q-p+1$ integers $t$ with $p\le t\le q$ will be denoted by $[p,q]$ (the {\em interval} from $p$ to $q$). If $A$ is a subset of $X$, we will denote by $A+1$ the set $A+1:=\{\,a+1\mid a\in A,\ a\ne n\,\}$.
  
  \begin{proposition} Let $n$ be a fixed positive integer, $X$ be the linearly ordered set $\{1,2,3,\dots,n\}$ and $T,F$ be subsets of $X$. The following conditions are equivalent.
  
  {\rm (a)} $(T,F)$ is a pretorsion theory for the category $X$.
  
   {\rm (b)} $T\cup F=X$, $1\in T$ and $n\in F$. Moreover, for every $i=1,2,\dots,n-1$, if $i\in T$ and $i+1\in F$, then either $i\in F$ or $i+1\in T$.
   
    {\rm (c)}  There exist two subsets $A$ and $C$ of $X$ such that  $C\cap(A\cup(A+1))=\emptyset$ and $C\cup A\subset X$. Moreover, defining $B:=X\setminus (A\cup C)$, we have that $T=A\cup B$, $F=B\cup C$, $1\in T$ and $n\in F$.\end{proposition}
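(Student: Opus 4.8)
The plan is to prove the cyclic chain of implications $(a)\Rightarrow(b)\Rightarrow(c)\Rightarrow(a)$, after first recording the elementary translation that makes the whole argument combinatorial. In the poset‑category $X$, a morphism $p\le q$ factors through an object $z$ exactly when $p\le z\le q$, so $p\le q$ is $\cal Z$‑trivial if and only if the integer interval $[p,q]$ meets $\cal Z=T\cap F$. Since in a poset every isomorphism is an identity and any two parallel arrows coincide, it follows that $y\le b$ is a $\cal Z$‑prekernel of $b\le c$ precisely when $[b,c]\cap\cal Z\neq\emptyset$ and $b=\max\{\,y'\le b\mid[y',c]\cap\cal Z\neq\emptyset\,\}$, and dually $b\le c$ is a $\cal Z$‑precokernel of $a\le b$ precisely when $[a,c]\cap\cal Z\neq\emptyset$ and $c=\min\{\,c'\ge b\mid[a,c']\cap\cal Z\neq\emptyset\,\}$. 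With this dictionary, the three implications reduce to finite interval arithmetic on the chain $X$.

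For $(a)\Rightarrow(b)$, apply Axiom~(2) of Definition~\ref{pretorsion-theory-def} to the object $1$: a short $\cal Z$‑preexact sequence $a\le 1\le c$ with $a\in T$ forces $a=1$, so $1\in T$; dually $n\in F$. To get $T\cup F=X$, suppose $b\notin T\cup F$ and take a short $\cal Z$‑preexact sequence $a\le b\le c$ with $a\in T$, $c\in F$; then $a<b<c$ strictly, so Lemma~\ref{xxx} (using that an isomorphism of $X$ is an identity) forces both $a\le b$ and $b\le c$ not to be $\cal Z$‑trivial, i.e. $[a,b]\cap\cal Z=\emptyset=[b,c]\cap\cal Z$; yet the composite $a\le c$ is $\cal Z$‑trivial while $[a,c]=[a,b]\cup[b,c]$, a contradiction. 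Finally, if $i\in T$, $i+1\in F$ with $i\notin F$ and $i+1\notin T$, then $i\in\cal T$ and $i+1\in\cal F$, so by Axiom~(1) the morphism $i\le i+1$ is $\cal Z$‑trivial; but $[i,i+1]=\{i,i+1\}$ and neither $i$ nor $i+1$ lies in $T\cap F$, a contradiction. This is precisely (b).

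For $(b)\Rightarrow(c)$, set $A:=T\setminus F$, $C:=F\setminus T$, $B:=T\cap F$; since $T\cup F=X$ these partition $X$, so $T=A\cup B$, $F=B\cup C$, $B=X\setminus(A\cup C)$, while $1\in T$ and $n\in F$ are already in hand. Then $C\cap A=\emptyset$, and $a\in A$ with $a\neq n$ and $a+1\in C$ would contradict the numerical clause of (b) applied to $i=a$; hence $C\cap(A\cup(A+1))=\emptyset$. Moreover $A\cup C\subsetneq X$, i.e. $B\neq\emptyset$: otherwise $T=A$ and $F=C$ partition $X$, and applying the numerical clause to $i:=\max T$, which satisfies $i<n$ since $n\in F\setminus T$, yields a contradiction. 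For the easy half of $(c)\Rightarrow(a)$, note $\cal Z=B$; Axiom~(1) requires $[p,q]\cap B\neq\emptyset$ whenever $p\in T$, $q\in F$, $p\le q$, and the only case needing work is $p\in A$, $q\in C$ (so $p<q$): letting $k$ be least in $[p,q]\cap(B\cup C)$, one has $k>p$ and $k-1\in[p,q]\cap A$, so $k=(k-1)+1\in A+1$ cannot lie in $C$, whence $k\in B\cap[p,q]$.

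The remaining point, Axiom~(2) in $(c)\Rightarrow(a)$, is where I expect the real work. Split according to the block of $X$ containing $b$. If $b\in B$, then $b\le b\le b$ (both arrows $1_b$) is short $\cal Z$‑preexact since $b\in\cal Z$. If $b\in A$, first check $\{z\in B\mid z\ge b\}\neq\emptyset$: otherwise $B\cap[b,n]=\emptyset$, which by a short induction using $C\cap(A+1)=\emptyset$ forces $[b,n]\subseteq A$, contradicting $n\notin A$; then set $c:=\min\{z\in B\mid z\ge b\}\in F$ and verify via the dictionary above that $b\le b\le c$ (first arrow $1_b$) is short $\cal Z$‑preexact — $1_b$ is a $\cal Z$‑prekernel of $b\le c$ because $c\in B\cap[b,c]$, and $b\le c$ is a $\cal Z$‑precokernel of $1_b$ because $c$ is the least $c'\ge b$ with $[b,c']\cap B\neq\emptyset$. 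The case $b\in C$ is dual: $C\cap(A+1)=\emptyset$ forces the $X$‑predecessor of any element of $C$ to avoid $A$, hence $\{z\in B\mid z\le b\}\neq\emptyset$, and with $a:=\max\{z\in B\mid z\le b\}\in T$ the sequence $a\le b\le b$ (second arrow $1_b$) is short $\cal Z$‑preexact. Together with $1\in T$, $n\in F$ and the verification of Axiom~(1), this shows $(T,F)$ is a pretorsion theory. The one genuinely delicate step is the non‑emptiness of $\{z\in B\mid z\ge b\}$ (resp. $\{z\in B\mid z\le b\}$): it is exactly where the hypotheses $C\cap(A+1)=\emptyset$ and $n\in F$ (dually $1\in T$) are used, while everything else is routine interval bookkeeping.
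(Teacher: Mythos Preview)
Your proof is correct and follows essentially the same route as the paper's (the same cyclic chain of implications, the same partition $A=T\setminus F$, $B=T\cap F$, $C=F\setminus T$, and the same interval bookkeeping for Axioms~(1) and~(2)). Two minor remarks: in your opening ``dictionary'' the prekernel clause is misstated --- it should read $y=\max\{\,y'\le b\mid[y',c]\cap\Cal Z\neq\emptyset\,\}$ (and $[y,c]\cap\Cal Z\neq\emptyset$), not $b=\max\{\cdots\}$ --- though you never actually invoke the incorrect version later; and your argument for $T\cup F=X$ (via the short $\Cal Z$-preexact sequence of $b$ and Lemma~\ref{xxx}) is a cleaner variant of the paper's, which instead uses Axiom~(1) applied to $\max(T\cap[1,p])$ and $\min(F\cap[p,n])$. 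You also supply explicitly the verification that $B\neq\emptyset$ in (b)${}\Rightarrow{}$(c), which the paper leaves implicit.
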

  
  \begin{proof} (a)${}\Rightarrow{}$(b). Let $(T,F)$ be a pretorsion theory for the category $X$. The $Z$-preexact sequence relative to the object $1$ of $X$ satisfying Axiom (2) must be of the form $t\le 1\le f$. Since $1$ is the least element of $X$, we must have $t=1$. Hence $1\in T$. Similarly, $n\in F$. 
  
  Clearly, $T\cup F\subseteq X$. Conversely, suppose $p\in X$. Set $t:=\max(T\cap[1,p])$ and $f:=\min(F\cap[p,n])$. Notice that such two elements $t$ and $f$ exist because $1\in T$ and $n\in F$. Then $t\le p\le f$. Since every morphism from $t\in T$ to $f\in F$ is $Z$-trivial, there exists $z\in Z:=T\cap F$ with $t\le z\le f$. Hence we have the two cases $t\le z\le p\le f$ and $t\le p\le z\le f$. Suppose $t\le z\le p\le f$. By the maximality of $t$, it follows that $t=z$. From Lemma~\ref{xxx}(a), we get that $p=f$. Hence $p\in F$ in this case. Similarly, if $t\le p\le z\le f$, we have $p\in T$. Therefore $T\cup F=X$. 
  
  Finally, let $i=1,2,\dots,n-1$ be such that $i\in T$ and $i+1\in F$. Since every morphism from $i\in T$ to $i+1\in F$ is $Z$-trivial, there exists $z\in Z$ with $i\le z\le i+1$. Therefore either $i=z$ or $z=i+1$, that is, either $i\in F$ or $i+1\in T$.
  
  (b)${}\Rightarrow{}$(c). Suppose that (b) holds, and set $A:=T\setminus F$, $B:=T\cap F$ and $C:=F\setminus T$. The condition ``for every $i=1,2,\dots,n-1$, if $i\in T$ and $i+1\in F$, then either $i\in F$ or $i+1\in T$'' can be restated as `` there does not exist $i=1,2,\dots,n-1$ such that $i\in T$, $i+1\in F$, $i\notin F$ and $i+1\notin T$'', i.e., ``$((T\setminus F)+1)\cap (F\setminus T)=\emptyset$'', that is, ``$(A+1)\cap C=\emptyset$''. Hence, from (b), we have that the two subsets $A:=T\setminus F$and $C:=F\setminus T$ of $X$ are such that  $C\cap(A\cup(A+1))=\emptyset$ and $C\cup A\subset X$, because $X\setminus(C\cup A)=Z$. Moreover, we have that $B=T\cap F=Z$, $B=X\setminus (A\cup C)$, $T=A\cup B$ and $F=B\cup C$.
  
   (c)${}\Rightarrow{}$(b). Suppose that (c) holds, so  that we have subsets $A,B,C,T,F$ of $X$ such that $C\cap(A\cup(A+1))=\emptyset$, $B=X\setminus (A\cup C)\ne\emptyset$, $T=A\cup B$ and $F=B\cup C$. Then $T\cup F=A\cup B\cup C=X$.  Finally, as we have seen in the proof of     (b)${}\Rightarrow{}$(c), the condition ``for every $i=1,2,\dots,n-1$, if $i\in T$ and $i+1\in F$, then either $i\in F$ or $i+1\in T$'' is equivalent to $(A+1)\cap C=\emptyset$, which holds by (c).
  
   (b) \& (c)${}\Rightarrow{}$(a). If (b) and (c) hold, then $Z:=T\cap F=B\ne\emptyset$. In order to prove that every morphism from $t\in T$ to $f\in F$ is $Z$-trival, we must show that  there exists $z\in Z$ with $t\le z\le f$. Suppose the contrary. Then $[t,f]\subseteq A\cup C$. Let $\overline{t} $ be the greatest element in $[t,f]\cap A$. Then $\overline{t} +1\in C$. This contradicts the condition  $C\cap(A\cup(A+1))=\emptyset$. Hence Axiom (1) in the definition of  pretorsion theory is satisfied. As far as Axiom (2) is concerned, fix an element $p\in X$. Since $X$ is the disjoint union of $A$, $B$ and $C$, we have three possible cases:
   
   If $p\in A$, then the $Z$-preexact sequence relative to $p$ satisfying Axiom (2)  is $p\le p\le z$, where $z:=\min(B\cap[p,n])$. Such an element $z$ exists because $B\cap[p,n]\ne\emptyset$. Otherwise $[p,n]\subseteq A\cup C$. Let $\overline{p}$ be the greatest element in $[p,n]\cap A$. Then $\overline{p} +1\in C$. (Notice that $n\in C$.) This contradicts the condition  $C\cap(A\cup(A+1))=\emptyset$. 
   
   If $p\in B$, then the $Z$-preexact sequence relative to $p$ satisfying Axiom (2)  is $p\le p\le p$, trivially.
   
   If $p\in C$, then the $Z$-preexact sequence relative to $p$ satisfying Axiom (2)  is $z\le p\le p$, where $z:=\max(B\cap[1,p])$. Such an element $z$ exists because $B\cap[1,p]\ne\emptyset$. Otherwise $[1,p]\subseteq A\cup C$. Let $\overline{p}$ be the greatest element in $[1,p]\cap A$.  (Notice that now $1\in A$ and $p\in C$.) Then $\overline{p} +1\in C$. This contradicts the condition  $C\cap(A\cup(A+1))=\emptyset$.\end{proof}

\subsection{Topological groups}\label{Tg}

Let $\Cal C$ be the category $\mathsf{Grp(Top)}$ of topological groups, $\Cal T$ the category $\mathsf{Grp(Ind)}$ of topological groups endowed with the trivial topology, and $\Cal F$ the category of $\mathsf{Grp(T_0)}$ of $\mathsf{T_0}$ topological groups. Then $(\mathsf{Grp(Ind)}, \mathsf{Grp(T_0)})$ is a pretorsion theory in $\mathsf{Grp(Top)}$ \cite{BG}. In this example, $\Cal Z$ consists of topological groups with one element, and $\Cal Z$-trivial morphisms are the morphisms $\alpha \colon (G,\cdot, \tau_G) \to (H, \cdot, \tau_H)$ such that $\alpha(g) = 1_H$ for every $g\in G$. The canonical short (pre)exact sequence relative to a topological group $(G,\cdot, \tau_G)$ is 
$$
\xymatrix{0 \ar[r] & (\overline{\{1_G \}},\cdot, \tau_{i})  \ar[r]^{j} & (G,\cdot, \tau_G) \ar[r]^-{\pi} & ({G}/{ \overline{\{1_G \}}}, \cdot, \tau_q) \ar[r] & 0
}
$$
where $(\overline{\{1_G \}},\cdot, \tau_{i})$  is the closure of the trivial subgroup $\{1_G\}$ in $(G,\cdot, \tau_G)$ (with the induced topology), and $\pi$ is the canonical mapping.

Another example of pretorsion theory in the category $\mathsf{Grp(Top)}$ of topological groups is given by the pair $(\mathsf{Grp(Conn)}, \mathsf{Grp(TotDis)})$, where $\mathsf{Grp(Conn)}$ is the category of connected groups and $\mathsf{Grp(TotDis)}$ is the category of totally disconnected groups.
In this case the canonical short exact sequence corresponding to a topological group $(G,\cdot, \tau_G)$ is 
$$
\xymatrix{0 \ar[r] & (\Gamma_{1_G}, \cdot, \tau_i)  \ar[r]^{l} & (G,\cdot, \tau_G) \ar[r]^{\varphi} & ({G}/{ \Gamma_{1_G}}, \cdot, \tau_q) \ar[r] & 0
}
$$
where $(\Gamma_{1_G}, \cdot, \tau_i)$ is the connected component of the unit $1_G$ of the topological group $(G,\cdot, \tau_G)$, and $\varphi$ is the canonical mapping.

\subsection{The Kolmogorov quotient of a topological space}

This is a generalization of the example in the first paragraph of \ref{Tg}. Let $\Cal C=\Top$ be the category of topological spaces and $\mathsf{T_0}$ be the subcategory of all $T_0$ topological spaces. A topology on a set $X$ is a {\em partition topology} if there is an equivalence relation on $X$ for which the equivalence classes are a base of open sets for the topology. Let $\mathsf{P}$ be the full subcategory of $\Top$ whose objects are all topological spaces whose topology is a partition topology. Clearly, $\Cal Z:=\mathsf{P}\cap\mathsf{T_0}$ consists of all discrete topological spaces. We claim that $(\mathsf{P},\mathsf{T_0})$ is a pretorsion theory in $\Top$. It is easily seen that: (1) If $X,Y$ are topological spaces, a continuous function $f\colon X\to Y$ is $\Cal Z$-trivial if and only if there is a partition of $X$ into clopen subsets such that $f$ is constant on each of these clopen subsets; (2)
If $X$ is a topological space whose topology is a partition topology, $Y$ is a $T_0$ space and $f\colon X\to Y$ is a continuous mapping, then $f$ is $\Cal Z$-trivial. 

Let $(X,\tau)$ be any topological space. If $x,y$ are two points of $X$, we write $x\equiv y$ if, for each open set $U$, $U$ contains either both $x$ and $y$ or neither of them (that is, if $\overline{\{x\}}=\overline{\{y\}}$). This relation $\equiv$ on $X$ is an equivalence relation, called {\em topological indistinguishability.}

Let  $KX := X/\!\!\equiv$ denote the quotient space, endowed with the quotient topology. The topological space $KX$ is the {\em Kolmogorov quotient} of $X$. The space $KX$ is $T_0$, the quotient map $q\colon X\to KX$ is open and induces a bijection between the topology on $X$ and the topology on $KX$. Let $\pi_\tau$ be the partition topology on $X$ induced by the equivalence relation $\equiv$. The identity mapping $k\colon (X,\pi_\tau)\to(X,\tau)$ is continuous. Then, in order to prove that $(\mathsf{P},\mathsf{T_0})$ is a pretorsion theory in $\Top$, it will be enough to verify that the sequence of morphisms $$\xymatrix{
	(X,\pi_\tau) \ar[r]^k &  (X,\tau) \ar[r]^q &  KX}$$ in $\Top$ is $\Cal Z$-preexact. 
Since $\pi_\tau$ is a partition topology and $KX$ is a $T_0$ space, it follows that $qk$ is $\Cal Z$-trivial.

 Let $\lambda\colon (X,\tau)\to Y$ be any morphism in $\Top $ such that $\lambda k=\lambda\colon (X,\pi_\tau)\to Y$ is $\Cal Z$-trivial. Let $u\colon (X,\pi_\tau)\to Z,\ v\colon Z\to Y$ be morphisms such that $\lambda k=vu$, where $Z$ is some discrete space. If $x,y\in X$ are such that $x\equiv y$, consider the open neighborhood $u^{-1}(\{u(x)\})$ of $x$ (here we are using the facts that $u$ is continuous and that $Z$ is discrete). Since $\pi_\tau$ is the partition topology induced by $\equiv$, it follows $[x]_\equiv=[y]_\equiv\subseteq u^{-1}(\{u(x)\})$. In particular, $y\in u^{-1}(\{u(x)\})$ and thus, since $\lambda=vy$, $\lambda(x)=\lambda(y)$. This proves that $\lambda$ induces the mapping $\lambda_0\colon KX\to Y$, $[x]_\equiv\mapsto \lambda(x)$, which is the unique mapping such that $\lambda=\lambda_0q$. Finally, $\lambda_0$ is continuous, essentially because $q$ is surjective and open and $\lambda=\lambda_0 q$. This proves that $q$ is a $\cal Z$-precokernel of $k$. 
 Similarly, one easily proves that $k$ is a $\cal Z$-prekernel of $q$. 
 
 \medskip
 
This example of pretorsion theory $(\mathsf{P},\mathsf{T_0})$ in $\Top$ extends both the first example in ~\ref{Tg} and the example of pretorsion theory  $(\Equiv,\ParOrd)$ on $\pre$  (Alexandrov spaces). By ``extension'' we mean via the forgetful functor $\mathsf{Grp(Top)}\to\Top$ and the embedding functor of the subcategory of Alexandrov spaces into $\Top$.

\subsection{Totally disconnected topological spaces} Let $\Top$ be the category of all topological spaces. Recall that the {\em connected components} of a topological space $X$ are the maximal connected subsets of $X$. They are closed subsets of $X$ and form a partition of $X$. Let $\Cal F$ be the category of all totally disconnected topological spaces, that is, those for which the connected components are the one-point sets. Let $\Cal T$ be the category of all topological spaces for which the connected components are open subsets. It is possible to check that $(\Cal T,\Cal F)$ is a pretorsion theory in $\Top$. Now $\Cal Z:=\Cal T\cap\Cal F$ is the category of all discrete topological spaces. For any topological space $X$, let $\sim$ be the equivalence relation on $X$ whose equivalence classes are the connected components of $X$. Endow the quotient set $X/\!\!\sim$ with the quotient topology, i.e.,  the finest topology making the canonical projection $q\colon X\to X/\!\!\sim$ continuous. Then $f(X):=X/\!\!\sim$ is totally disconnected. For any continuous map $\varphi\colon X\to Y$ to a totally disconnected space $Y $, there exists a unique continuous map $\widetilde{f}\colon X/\!\!\sim{}\to{}\, Y$ with $\varphi=\widetilde{\varphi}q$. Notice that if $T\in\Cal T$, then $T/\!\!\sim$ is a discrete topological space, so that every continuous mapping $\varphi\colon T\to Y$ with $Y\in\Cal F$ factors through the discrete topological space $T/\!\!\sim$, hence is $\Cal Z$-trivial. For any topological space $(X,\tau)$, let $\sigma_\tau$ be the coarsest topology weaker than both $\tau$ and the partition topology on $X$ determined by the partition into connected components. A base for the topology $\sigma_\tau$ is given by the sets conn$(x)\cap U$, where conn$(x)$ denotes the connected component of any element $x\in X$ and $U$ is any open subset in the topology $\tau$. The short $\Cal Z$-preexact sequence corresponding to any topological space $(X,\tau)$ is the sequence $\xymatrix{
    (X,\sigma_\tau) \ar[r]^i &  (X,\tau) \ar[r]^q & X/\!\!\sim}$, where $i\colon (X,\sigma_\tau) \to (X,\tau) $ is the identity.

\section{Final remarks. Other torsion theories in the literature.}
  
In this last section, we briefly compare our approach with the  torsion theories studied in \cite{BG, CMM, GJ,GJM, JT}.

\begin{remark}\label{homological-cat}
	{\rm Let $\cal H$ be a \emph{homological category}, i.e., a category that is pointed (with a zero object $0$), regular and protomodular. Following \cite[Definition 4.1]{BG}, a torsion theory in $\cal H$ is a pair $(\cal T,\cal F)$ of full replete subcategories of $\cal H$ satisfying the following axioms. 
		\begin{enumerate}
			\item For every $T\in\cal T,F\in\cal F$, $\Hom_{\cal H}(T,F)=\{0\}$ { (here $0$ denotes the zero morphism $T \rightarrow 0 \rightarrow F$ from $T$ to $F$)}.
			\item For every object $X\in \cal C$, there exists a short exact sequence $$0\to T\to X\to F\to 0,$$
			where $T\in\cal T$ and $F\in\cal F$.
		\end{enumerate}
		In view of \cite[Lemma 4.3(1)]{BG}, if $(\cal T,\cal F)$ is a torsion theory for $\cal H$, then $\cal Z$ {  consists of (the replete subcategory of) the zero object $0$ of $\cal C$}. {The fact that $\Cal Z = \{ 0 \}$ implies that $\cal Z$-preexact sequences in $\cal H$ are classical exact sequences, so that every torsion theory in a homological category is a pretorsion theory in the sense of this article.}
	}

{\rm 
{ Of course, in order to define a torsion theory as here above, it suffices to assume that $\cal H$ is a \emph{pointed 
category}, as was done by Janelidze and Tholen in \cite{JT}. Also this general notion of torsion theory
is a special case of our notion of pretorsion theory,}{ essentially for the same reason as that recalled above:
when $\cal Z = \{ 0 \}$, $\cal Z$-preexact sequences reduce to the usual exact sequences. Note that the following result, proved in \cite{JT},
is a relevant special case of Proposition \ref{idempotent-subfunctor}:}
\begin{corollary}[\cite{JT}] 
For a full replete subcategory $\cal F$ of a {pointed category $\cal C$ with kernels and cokernels} the following conditions are equivalent. 
\begin{enumerate}
	\item $\cal F$ is the torsion-free part of a torsion theory $(\cal T,\cal F)$ in $\cal C$. 
	\item $\cal F$ is a normal epireflective subcategory of $\cal C$, and the radical induced by the reflector $f \colon \cal C \rightarrow \cal F$ is idempotent.
\end{enumerate}
\end{corollary}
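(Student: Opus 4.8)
The plan is to deduce this corollary from Proposition~\ref{idempotent-subfunctor} by taking for $\cal Z$ the replete full subcategory of $\cal C$ generated by the zero object $0$. First I would check that this $\cal Z$ meets the running hypotheses of Proposition~\ref{idempotent-subfunctor}: it is non-empty since $\cal C$ is pointed, and it is closed under retracts because any retract of $0$ is again a zero object (if $ps=1_X$ with $s\colon X\to 0$ and $p\colon 0\to X$, then $1_X$ factors through $0$, forcing $X\cong 0$).

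Next I would unwind the $\cal Z$-vocabulary for this choice. A morphism is $\cal Z$-trivial precisely when it factors through $0$, i.e.\ when it is the zero morphism, so $\Triv_{\cal Z}(T,F)=\{0\}$ and the condition $\Hom_{\cal C}(T,F)=\Triv_{\cal Z}(T,F)$ of Definition~\ref{pretorsion-theory-def}(1) reads $\Hom_{\cal C}(T,F)=\{0\}$; likewise $\cal Z$-prekernels and $\cal Z$-precokernels are ordinary kernels and cokernels, short $\cal Z$-preexact sequences $T\to X\to F$ are short exact sequences $0\to T\to X\to F\to 0$, and ``$\cal Z$-normal epireflective'' is exactly ``normal epireflective''. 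Thus a pretorsion theory $(\cal T,\cal F)$ with $\cal T\cap\cal F=\cal Z$ is precisely a torsion theory in $\cal C$ in the sense of \cite{JT}; and conversely, for any torsion theory the intersection $\cal T\cap\cal F$ is automatically this $\cal Z$, since $Z\in\cal T\cap\cal F$ gives $1_Z\in\Hom_{\cal C}(Z,Z)=\{0\}$, i.e.\ $Z$ is a zero object. Under this dictionary, condition~(a) of Proposition~\ref{idempotent-subfunctor} becomes condition~(1) of the corollary.

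Finally I would match condition~(b) of Proposition~\ref{idempotent-subfunctor} with condition~(2). Since $\cal C$ has kernels, the $A$-component $\eta_A\colon A\to f(A)$ of the unit automatically has a $\cal Z$-prekernel $\varepsilon_A\colon t(A)\to A$, namely its kernel, so that clause of~(b) is vacuous here. It remains to identify ``$\varepsilon_{t(A)}\colon t(t(A))\to t(A)$ is an isomorphism for every $A$'' with ``the radical induced by the reflector $f$ is idempotent'': the radical in question is the subfunctor $E_{\cal T}=e_{\cal T}\circ t$ of the identity with $E_{\cal T}(A)=\ker\eta_A$, and it is idempotent exactly when $E_{\cal T}E_{\cal T}\to E_{\cal T}$ is an isomorphism, i.e.\ when each $\varepsilon_{t(A)}$ is an isomorphism --- equivalently, by Lemma~\ref{xxx}(b) applied to the exact sequence $t(t(A))\to t(A)\to f(t(A))$, when $\eta_{t(A)}$ is the zero morphism. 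So~(b) becomes~(2), and the corollary follows from Proposition~\ref{idempotent-subfunctor}. The only point in the argument that is not pure unwinding of definitions is this last step: confirming that our condition on $\varepsilon_{t(A)}$ is the customary idempotency of the radical, and that it is precisely the hypothesis that $\cal C$ has cokernels which makes ``normal epireflective'' in \cite{JT} coincide with ``$\cal Z$-normal epireflective'' here (so that the unit components are honest cokernels rather than merely regular or extremal epimorphisms).
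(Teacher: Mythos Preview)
Your proposal is correct and follows exactly the approach the paper intends: the corollary is stated there simply as ``a relevant special case of Proposition~\ref{idempotent-subfunctor}'' with $\Cal Z=\{0\}$, and you have spelled out that specialization in full detail. The only quibble is your closing parenthetical about cokernels: the identification of ``normal epireflective'' with ``$\Cal Z$-normal epireflective'' is immediate from the definitions once $\Cal Z=\{0\}$ (a $\Cal Z$-precokernel is then just a cokernel), so the hypothesis that $\Cal C$ has cokernels plays no role there---it is the hypothesis on kernels that is doing the work, by making the $\Cal Z$-prekernel clause of~(b) automatic.
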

}
\end{remark}
\begin{remark}{\rm Another notion of torsion theory is studied in \cite{GJ}. In that paper, torsion theories in multi-pointed categories are studied. In those categories, the identity of any object has a $\Cal Z$-prekernel and a $\Cal Z$-precokernel. In this paper, we study a case more general than that, because, in our case, it is possible that there are objects $B$ in $\Cal C$ such that $\Hom_{\Cal C}(Z,B)=\emptyset$ for every $Z\in\Cal Z$ (Remark~\ref{non-empty}(b)). For such an object $B$, the identity $1_B\colon B\to B$ cannot have a $\Cal Z$-prekernel.

Moreover, in the torsion theories studied in \cite{GJ}, the category $\Cal Z$ turns out to be a reflective and coreflective subcategory of $\Cal C$, and this is not true in our case. 
For instance, if, like in the previous paragraph, there is an object $B$ in $\Cal C$ such that $\Hom_{\Cal C}(Z,B)=\emptyset$ for every $Z\in\Cal Z$, the embedding $\Cal Z\hookrightarrow\cal C$ cannot have a right adjoint.
Let us also mention the fact that this approach to torsion theories using ideal of morphisms was already adopted in \cite{Mantovani}. In the article \cite{G3}, torsion theories in the context of \emph{semiexact categories} were studied.}
\end{remark}

\begin{remark}{\rm In Monoid Theory, there are three apparently related notions:

\noindent (1) {\em Congruences on a monoid $M$,} that is, equivalence relations on the set $M$ compatible with the binary operation $\cdot$ and the nullary operation $1$ on $M$. { All such congruences arise as \emph{kernel pairs} of some monoid morphism $f\colon M\to N$, i.e., as equivalence relations on $M$ of the form $\sim_f$, where $x\sim_fy$ if $f(x)=f(y)$.

\noindent (2) {\em The equivalence class $[1_M]_\sim$ of the identity $1_M$ of $M$ with respect to a congruence $\sim$ on $M$,} {  i.e., the subset $f^{-1}(1_N)$
for some monoid morphism $f\colon M\to N$.}

\noindent (3) {\em Ideals of $M$,} that is, non-empty subsets $I$ of $M$ such that, for every $i\in I$ and every $x\in M$, both $xi$ and $ix$ belong to $I$.

Clearly, for a monoid $M$ congruences as in (1) determine equivalence classes as in (2), but not conversely. If the monoid $M$ is a group, the converse holds, as is well known, and the equivalence classes as in (2) are the normal subgroups. { The same property holds, more generally, in any \emph{ideal determined category} \cite{ID}.}

On the other hand, ideals as in (3) are related to the notion of monoids with zero.  For every monoid morphism $f\colon M\to N$, where $N$ is a monoid with zero $0_N$, $f^{-1}(0_N)$ is either the empty set or an ideal of $M$. Given any ideal $I$ of $M$, it is possible to define a congruence on $M$ setting, for every $x,y\in M$, $x\sim_Iy$ if either $x=y$, or both $x\in I$ and $y\in I$. Moreover, for any ideal $I$ of $M$, the quotient monoid $M/\!\!\sim_I$ turns out to be a monoid with zero. For a group, the unique ideal is the improper subset.

For rings $R$ with identity $1_R$, ring morphisms that map $1$ to $1$, and congruences in (1) compatible with the ring operations, we have that there is a one-to-one correspondence between congruences (compatible with $+$ and $\cdot$) and subsets as in (2), which are exactly the ideals $I$ of the ring as in (3). One has that $f^{-1}(1_N)=1_M+f^{-1}(0_N)$. Thus (1), (2) and (3) are equivalent concepts for rings.

All this has its counterpart in categories, where we have the notion of congruence $R$ on a category $\Cal C$ \cite[p.~52]{ML}, and for any congruence $R$ it is possible to construct the quotient category $\Cal C/R$ \cite[Section II.8]{ML}. Here a congruence $R$ is a function that assigns to each pair of objects $a,b$ of $\Cal C$ an equivalence relation $R_{a,b}$ on the set $\Hom_{\Cal C}(a,b)$, in a way compatible with composition. An ideal $\Cal I$ in a category $\Cal C$ is a function that assigns to each pair of objects $a,b$ of $\Cal C$ a subset $\Cal I_{a,b}$ of the set $\Hom_{\Cal C}(a,b)$ so that, for all objects $a,b,c,d $ of $\Cal C$, if $f\in \Hom_{\Cal C}(a,b)$, $ g\in{\Cal I}_{b,c}$, $h\in\Hom_{\Cal C}(c,d)$, then $hgf\in {\Cal I}_{a,d}$. For every ideal $\Cal I$ in a category $\Cal C$, there is an associated congruence $R$ on $\Cal C$ defined, for every $f,g\in \Hom_{\Cal C}(a,b)$, by $fR_{a,b}g$ if either $f=g$ or both $f$ and $g$ belong to $\Cal I_{a,b}$. Notice that ideals of a category $\Cal C$ are exactly  the subfunctors of the functor $\Hom\colon \Cal C^{\op}\times\Cal C\to\Sets$, and quotient categories $\Cal C/R$, with $ R$ a congruence in $\Cal C$, are exactly  the quotient functors of the functor $\Hom\colon \Cal C^{\op}\times\Cal C\to\Sets$.}

We can define a notion of {\em quasi-zero object} in a category $\Cal C$, in the same spirit as \emph{quasi-terminal objects} were defined in \cite[A1.5.14]{Jo}. They are the objects $z$ of $\Cal C$ such that there is at most one morphism from, and to, any object $a$ in $\cal C$. In other words, 
$|\Hom_{\Cal C}(a,z)|\le 1$ and $|\Hom_{\Cal C}(z,a)|\le 1$ for every object $a$ of $\Cal C$. Suppose we are in the special case studied in this paper, in which $\Cal C$ is a category,  $\Cal Z$ is a non-empty class of objects of $\Cal C$, and $\Cal I$ is the ideal of $\Cal C$ generated by the identity morphisms of the objects in $\Cal Z$. If $R$ is the congruence associated to $\Cal I$, then all objects of $\Cal Z$ become quasi-zero objects in the quotient category $\Cal C/R$.

Finally, when the category $\Cal C$ is additive, there is a one-to-one correspondence between the congruences on $\Cal C$ that are compatible not only with composition but also with addition, and the ideals $\Cal I$ in $\Cal C$ for which $\Cal I_{a,b}$ is a subgroup of the group $\operatorname{Hom}_{\Cal C}(a,b)$ for every pair $a,b$ of objects of $\Cal C$.}\end{remark}


\begin{thebibliography}{99}

\bibitem{B} M. Barr, \emph{Non-abelian torsion theories,} Canad. J. Math. \textbf{25} (1973), 1224--1237, https://doi.org/10.4153/CJM-1973-130-4. 

\bibitem{BG} D. Bourn and M. Gran, \emph{Torsion theories in homological categories}, J. Algebra \textbf{305} (2006),  18--47, https://doi.org/10.1016/j.jalgebra.2006.07.011. 

\bibitem {BGV} A. Buys, N. J. Groenewald and S. Veldsman, {\em Radical and semisimple classes in categories,} Quaestiones Math. \textbf{4}(3) (1980--81), 205--220, https://doi.org/10.1080/16073606.1981.9631873.

\bibitem {BV} A. Buys and S. Veldsman, {\em Quasiradicals and radicals in categories,} Publ. Inst. Math. (Beograd) (N.S.) \textbf{38}(52) (1985), 51--63. 

\bibitem {CDT} M. M. Clementino, D. Dikranjan and W. Tholen, {\em Torsion theories and radicals in normal categories,} J. Algebra \textbf{305} (2006), 92--129,  https://doi.org/10.1016/j.jalgebra.2005.09.030.

\bibitem{CMM} M. M. Clementino, N. Martins-Ferreira and A. Montoli, {\em On the categorical behaviour of preordered groups,} J. Pure Appl. Algebra 
\textbf{223}(10) (2019), 4226--4245,  https://doi.org/10.1016/j.jpaa.2019.01.006.

\bibitem{Dickson} S. E. Dickson, {\em A torsion theory for abelian categories,} Trans. Amer. Math. Soc. \textbf{121} (1966), 223--235,  https://www.jstor.org/stable/1994341.

\bibitem{Eh} C. Ehresmann, {\em Cohomologie a valeurs dans une cat\'egorie domin\'ee,} Extraits du Colloque de Topologie, Bruxelles 1964, in: C. Ehresmann, ``Oeuvres compl\`etes et comment\'ees'', Partie III-2, Amiens, 1980, pp. 531--590.

\bibitem{EG} T. Everaert and M. Gran, {\em Protoadditive functors, derived torsion theories and homology,} J. Pure Appl. Algebra \textbf{219}(8) (2015), 3629--3676, https://doi.org/10.1016/j.jpaa.2014.12.015.
 
\bibitem{AC} A. Facchini and C. Finocchiaro, {\em Pretorsion theories, stable category and preordered sets,} Ann. Mat. Pura Appl. \textbf{199} (2020), 1073--1089, https://doi.org/10.1007/s10231-019-00912-2.

\bibitem{FFG} A. Facchini, C. Finocchiaro and M. Gran, {\em A new Galois structure in the category of internal preorders}, Theory Appl. Categories \textbf{35}(11) (2020), 326--349.

\bibitem{AL} A. Facchini and L. Heidari Zadeh, {\em An extension of properties of symmetric group to monoids and a pretorsion theory in the category of mappings,} J. Algebra Appl. \textbf{18}(12) (2019), 1950234 https://doi.org/10.1142/S0219498819502347.

\bibitem{GJU} M. Gran, Z. Janelidze and A. Ursini, {\em A good theory of ideals in regular multi-pointed categories}, J. Pure Appl. Algebra  \textbf{216}(8-9) (2012), 1905--1919,  https://doi.org/10.1016/j.jpaa.2012.02.028.

\bibitem{GJR} M. Gran, Z. Janelidze and D. Rodelo, {\em $3 \times 3$-Lemma for star-exact sequances}, Homology, Homotopy Appl. \textbf{14}(2) (2012), 1--22.

\bibitem{G1} M. Grandis, {\em A categorical approach to exactness in algebraic topology,} in: ``Atti del
V Convegno Nazionale di Topologia'', Lecce-Otranto 1990, Rend. Circ. Mat. Palermo \textbf{29} (1992), 179--213.

\bibitem{G2} M. Grandis, {\em On the categorical foundations of homological and homotopical algebra,}
Cah. Topol. G\'eom. Diff. Categ. \textbf{33} (1992), 135--175.

\bibitem{G3} M. Grandis, ``Homological Algebra in strongly non-abelian settings'', World Scientific
Publishing Co., Singapore, 2013,  https://doi.org/10.1142/8608

\bibitem{GJ} M. Grandis and G. Janelidze, {\em From torsion theories to closure operators and  factorization systems,} Categories and General Algebraic Structures with Applications, \textbf{12}(1) (2020), 89--121.

\bibitem{GJM} M. Grandis, G. Janelidze and L. M\'arki, {\em Non-pointed exactness, radicals, closure operators,} J. Aust. Math. Soc. \textbf{94}(3) (2013),  348--361, doi:10.1017/S1446788713000086

\bibitem{ID} G. Janelidze, L. M\'arki, W. Tholen and A. Ursini, {\em Ideal determined categories},
Cah. Top. G\'eom. Diff. Cat\'eg. \textbf{51}(2) (2010), 115--125.
 
\bibitem{JT} G. Janelidze and W. Tholen, {\em Characterization of torsion theories in general categories}, in  ``Categories in algebra, geometry and mathematical physics'', A. Davydov, M. Batanin, M. Johnson, S. Lack and A. Neeman Eds., Contemp. Math. \textbf{431}, Amer. Math. Soc., Providence, RI, 2007, pp. 249--256. 

\bibitem{Jo} P.T. Johnstone, ``Sketches of an elephant: a topos theory compendium'', Vol. 1, Oxford Logic Guides \textbf{43}, Oxford Univ. Press, New York, 2002,  doi:10.1017/S1079898600003462

\bibitem{La} R. Lavendhomme, {\em Un plongement pleinement fid\`ele de la cat\'egorie des groupes,} Bull.
Soc. Math. Belgique \textbf{17} (1965), 153--185.

\bibitem{ML} S. MacLane,  ``Categories for the Working Mathematician'', 2nd edn., Springer-Verlag, New York-Berlin, 1998,  https://doi.org/10.1007/978-1-4612-9839-7.

\bibitem{Mantovani} S. Mantovani, {\em Torsion theories for crossed modules,} invited talk at the ``Workshop on category theory and topology'', Universit\'e catholique de Louvain, September 2015.

\bibitem{BR} B. A. Rattray, \emph{Torsion theories in non-additive categories,} Manuscripta Math. \textbf{12} (1974), 285--305,  https://doi.org/10.1007/BF01155518.

\bibitem{RT} J. Rosick\'y and W. Tholen, \emph{Factorization, fibration and torsion}, J. Homotopy Relat. Struct. \textbf{2} (2007), 295--314, https://eudml.org/doc/230964.

\bibitem{V82} S. Veldsman,  \emph{On the characterization of radical and semisimple classes in categories,} Comm. Algebra \textbf{10}(9) (1982), 913--938, DOI: 10.1080/00927878208822757.

\bibitem {V84} S. Veldsman,  \emph{Radical classes, connectednesses and torsion theories,} Suid-Afrikaanse Tydskr. Natuurwetenskap Tegnol. \textbf{3}(1) (1984), 42--45  https://doi.org/10.4102/satnt.v3i1.1066.

\bibitem {VW} S. Veldsman and R. Wiegandt,  \emph{On the existence and nonexistence of complementary radical and semisimple classes,} Quaestiones Math. \textbf{7}(3) (1984), 213--224  DOI: 10.1080/16073606.1984.9632332.

\end{thebibliography}
\end{document}